\documentclass{article}

\usepackage{amsmath, amssymb, amsthm, amsfonts, bm}
\usepackage{enumerate}

\numberwithin{equation}{section}
\newtheorem{theorem}{Theorem}[section]

\newenvironment{ctheorem}[1]
  {\cthm}
  {\endcthm}
\newtheorem{corollary}{Corollary}[section]
\newtheorem{lemma}{Lemma}[section]
\theoremstyle{definition}
\newtheorem{definition}{Definition}[section]
\theoremstyle{remark}
\newtheorem{remark}{Remark}[section]
\newtheorem*{cremark}{Remark}

\newcommand{\Rnum}[1]{\uppercase\expandafter{\romannumeral #1\relax}}
\newcommand{\rnum}[1]{\romannumeral #1\relax}
\newcommand{\mr}[1]{\mathrm{#1}}
\newcommand{\mb}[1]{\mathbb{#1}}
\newcommand{\mc}[1]{\mathcal{#1}}

\DeclareMathOperator{\Av}{Av} 

\def\clap#1{\hbox to 0pt{\hss#1\hss}}
\def\mathclap{\mathpalette\mathclapinternal}
\def\mathclapinternal#1#2{\clap{$\mathsurround=0pt#1{#2}$}}

\title{Extremal sequences for the Bellman function of three variables of the dyadic maximal operator related to Kolmogorov's inequality}
\author{Eleftherios N. Nikolidakis}
\date{\today}

\begin{document}
\maketitle

\begin{abstract}
We give a characterization of the extremal sequences for the Bellman function of three variables of the dyadic maximal operator in relation to Kolmogorov's inequality. In fact we prove that they behave approximately like eigenfunctions of this operator for a specific eigenvalue. For this approach we use the one introduced in \cite{11}, where the respective Bellman function has been precisely evaluated.
\end{abstract}

\section{Introduction} \label{sec:1}
The dyadic maximal operator on $\mb R^n$ is a useful tool in analysis and is defined by
\begin{equation} \label{eq:1p1}
\mc M_d\phi(x) = \sup\left\{ \frac{1}{|S|} \int_S |\phi(u)|\,\mr du: x\in S,\ S\subseteq \mb R^n\ \text{is a dyadic cube} \right\},
\end{equation}
for every $\phi\in L^1_\text{loc}(\mb R^n)$, where $|\cdot|$ denotes the Lebesgue measure on $\mb R^n$, and the dyadic cubes are those formed by the grids $2^{-N}\mb Z^n$, for $N=0, 1, 2, \ldots$.\\
It is well known that it satisfies the following weak type (1,1) inequality
\begin{equation} \label{eq:1p2}
\left|\left\{ x\in\mb R^n: \mc M_d\phi(x) > \lambda \right\}\right| \leq \frac{1}{\lambda} \int_{\left\{\mc M_d\phi > \lambda\right\}} |\phi(u)|\,\mr du,
\end{equation}
for every $\phi\in L^1(\mb R^n)$, and every $\lambda>0$,
from which follows in view of Kolmogorov's inequality the following $L^q$-inequality
\begin{equation} \label{eq:1p3}
\int_E \left|\mc M_d\phi(u)\right|^q\mr du \leq \frac{1}{1-q} |E|^{1-q} \|\phi\|_1^q,
\end{equation}
for every $q\in(0,1)$, every $\phi\in L^1(\mb R^n)$ and every measurable subset of $\mb R^n$, $E$, of finite measure. It is not difficult to see that the weak type inequality \eqref{eq:1p2} is best possible. For refinements of this inequality one can see \cite{15}, \cite{17} and \cite{18}.

An approach for studying in more depth the behaviour of this maximal operator is the introduction of the so called Bellman functions related to them which reflect certain deeper properties of them by localizing. Such functions related to the $L^q$ inequality \eqref{eq:1p3} have been precisely evaluated in \cite{11}. Define $\Av_E(\psi)=\frac{1}{|E|} \int_E |\psi|$, where $E\subseteq \mb R^n$ is measurable of positive measure and $\psi$ is measurable on $E$, and fixing a dyadic cube define the localized maximal operator $\mc M'_d\phi$ as in \eqref{eq:1p1} but with the dyadic cubes $S$ being assumed to be contained in the ambient dyadic cube $Q$. Then for every $q\in(0,1)$ we let
\begin{equation} \label{eq:1p4}
B_q(f,h)=\sup\left\{ \frac{1}{|Q|} \int_Q (\mc M'_d\phi)^q: \Av_Q(\phi)=f,\ \Av_Q(\phi^q)=h, \right\}
\end{equation}
where $\phi$ is nonnegative in $L^1(Q)$ and the variables $f, h$ satisfy $0<h\leq f^q$. By a scaling argument it is easy to see that the above is independent of the choice of $Q$ (so we just have written $B_q(f,h)$ and we may take $Q=[0,1]^n$).
In \cite{11}, now the function \eqref{eq:1p4} has been precisely evaluated. The proof has been given in a much more general setting of tree-like structures on probability spaces.
More precisely we consider a non-atomic probability space $(X,\mu)$ and let $\mc T$ be a family of measurable subsets of $X$, that has a tree-like structure similar to the one in the dyadic case (the exact definition will be given in Section \ref{sec:2}).
Then we define the dyadic maximal operator associated with $\mc T$, by
\begin{equation} \label{eq:1p5}
\mc M_{\mc T}\phi(x) = \sup \left\{ \frac{1}{\mu(I)} \int_I |\phi|\,\mr d\mu: x\in I\in \mc T \right\},
\end{equation}
for every $\phi\in L^1(X,\mu)$. \\
This operator is related to the theory of martingales and satisfies essentially the same inequalities as $\mc M'_d$ does. Now we define the corresponding Bellman function of $\mc M_{\mc T}$, by
\begin{multline} \label{eq:1p6}
B_q^Q(f,h,L,k) = \sup \left\{ \int_E \left[ \max(\mc M_{\mc T}\phi, L)\right]^q\mr d\mu: \phi\geq 0, \int_X\phi\,\mr d\mu=f, \right. \\  \left. \int_X\phi^q\,\mr d\mu = h,\ E\subseteq X\ \text{measurable with}\ \mu(E)=k\right\},
\end{multline}
the variables $f, h, L, k$ satisfying $0<h\leq f^q$, $L\geq f$ and $k\in (0,1]$.
The evaluation of \eqref{eq:1p6} is now given in \cite{11}, and has been done in several steps. The first one is to find the value of
\begin{equation} \label{eq:1p7}
B_q^{\mc T}(f,h,f,1) = \sup\left\{ \int_X (\mc M_{\mc T}\phi)^q\,\mr d\mu:\ \phi \geq 0,\ \int_X \phi\,\mr d\mu = f,\ \int_X \phi^q\,\mr d\mu = h\right\}.
\end{equation}
It is proved in \cite{11}, that \eqref{eq:1p7} equals $h\,\omega_q\!\left(\frac{f^q}{h}\right)$ where $\omega_q: [1,+\infty) \to [1,+\infty)$ is defined as $\omega_q(z) = \left[H^{-1}_q(z)\right]^q$, where $H^{-1}_q$ is the inverse of $H_q$ given by $H_q(z) = (1-q)z^q + qz^{q-1}$, for $z\geq 1$. \\
The second step for the evaluation of \eqref{eq:1p6} is to find $B_q^{\mc T}(f,h,L,1)$ for arbitrary $L\ge f$.
We state the related result:
\begin{ctheorem}{1} \label{thm:1}
With the above notation
\begin{equation} \label{eq:1p8}
B_q^{\mc T}(f,h,L,1) = h\,\omega_q\!\left( \frac{(1-q)L^q + qL^{q-1}f}{h}\right).
\end{equation}
\end{ctheorem}
Our aim in this paper is to characterize the extremal sequences of functions involving \eqref{eq:1p8}. More precisely we will prove the following
\begin{ctheorem}{A} \label{thm:a}
Let $\phi_n: (X,\mu) \to \mb R^+$ be such that $\int_X \phi_n\mr d\mu=f$ and $\int_X \phi_n^q\mr d\mu=h$, where $f,h$ are fixed with $0< h\leq f^q$, $q\in (0,1)$ and $n\in \mb N$. Suppose additionally that $L\geq f$. Then the following are equivalent:
\begin{enumerate}[i)]
\item \quad  $\displaystyle \lim_n \int_X \left[ \max(\mc M_{\mc T}\phi_n, L)\right]^q\mr d\mu = B_q^{\mc T} (f, h, L, 1) $
\item \quad $\displaystyle \lim_n \int_X \left| \max(\mc M_{\mc T}\phi_n, L) - c^\frac{1}{q}\phi_n\right|^q\mr d\mu = 0 $,
\end{enumerate}
where $c = \omega_q\!\left(\frac{(1-q)L^q + qL^{q-1}f}{h}\right)$.
\end{ctheorem}
We discuss now the method of the proof of Theorem \ref{thm:a}. We begin by proving two Theorems (\ref{thm:4p1} and \ref{thm:4p2}) which are generalizations of the results in \cite{11}. By using these theorems, we prove Theorem \ref{thm:4p3} which is valid for any extremal sequence and in fact is a weak form of Theorem \ref{thm:a}. We then apply Theorem \ref{thm:4p3} to a new sequence of functions, called $(g_{\phi_n})_n$, which arises from $(\phi_n)_n$ by a natural, as we shall see, way.
The function $g_{\phi_n}$ is in fact equal to $\phi_n$ on the set $\left\{ \mc M_{\mc T} \phi_n \leq L\right\}$, and constant on certain subsets of $E_n = \left\{\mc M_{\mc T}\phi_n > L\right\}$, which are enough for one to describe the behavior of $\mc M_{\mc T}\phi_n$ in $E_n$.
This new sequence has the property that it is in fact arbitrary close to $(\phi_n)_n$, thus it is extremal. An application then of Theorem \ref{thm:4p3} to this new sequence and some combinations of lemmas will enable us to provide the proof of Theorem \ref{thm:a}.

We need also to mention that the extremizers for the standard Bellman function for the case $p>1$ has been studied in \cite{16}, inspired by \cite{10}. In this paper we study the more general case (for the Bellman function of three variables and for $q\in (0,1)$) which presents additional difficulties because of the presence of the third variable $L$.

We note also that further study of the dyadic maximal operator can be seen in \cite{19} and \cite{18} where symmetrizations principles for this operator are presented, while other approaches for the determination of certain Bellman function can be found in \cite{26}, \cite{27}, \cite{31}, \cite{32},  and \cite{33}.

Also we need to say that the phenomenon that the norm of a maximal operator is attained by a sequence of eigenfuntions doesn't occur here for the first time, for example see \cite{4} and \cite{5}.
Nevertheless as far as we know this phenomenon is presented here and in \cite{6} for the first time for the case of more generalized norms, such as the Bellman functions that we describe.

There are several problems in Harmonic Analysis where Bellman functions naturally arise. Such problems (including the dyadic Carleson imbedding and
weighted inequalities) are described in \cite{14} (see also \cite{12}, \cite{13}) and also connections to Stochastic Optimal Control are provided,
from which it follows that the corresponding Bellman functions satisfy certain nonlinear second order PDE.

The exact computation of a Bellman function is a difficult task which is connected with the deeper structure of the corresponding Harmonic Analysis
problem. Thus far several Bellman functions have been computed (see \cite{2}, \cite{3}, \cite{10}, \cite{25}, \cite{27}, \cite{31}, \cite{32}, \cite{33}). L.Slavin, A.Stokolos and V. Vasyunin \cite{26} linked the Bellman function computation to solving certain PDE's of the Monge-Amp\`{e}re type, and in this way they obtained an alternative proof of the Bellman functions related to the dyadic maximal operator in \cite{10}. In this last mentioned work it is precisely
evaluated the corresponding to \eqref{eq:1p7} Bellman function for the case $q>1$.  Also in \cite{33} using the Monge-Amp\`{e}re equation approach a more general Bellman function than the one related to the dyadic Carleson imbedding Theorem has been precisely evaluated thus generalizing the corresponding result in \cite{10}. For more recent developments and results related to the Bellman function technique we refer to \cite{1}, \cite{6}, \cite{7}, \cite{22}, \cite{23}, \cite{24}, \cite{28}, \cite{29}, \cite{36}. Additional results can be found in \cite{2}, \cite{21}, \cite{34}, \cite{35}, while for the study of the general theory of maximal operators one can consult \cite{30}.

In this paper, as in our previous ones, we use Bellman functions as a means to gain deeper understanding of the corresponding maximal operators and we are
not using the standard techniques as Bellman dynamics and induction, corresponding PDE's, obstacle conditions etc. Instead, our methods being different from the Bellman function technique, we rely on the combinatorial structure of these operators. For such approaches, which enable us to study and solve problems such as the one which is described in this article one can see \cite{8}, \cite{9}, \cite{10}, \cite{11}, \cite{16} and \cite{19}. \bigskip

\section{Preliminaries} \label{sec:2}
Let $(X,\mu)$ be a nonatomic probability space. We give the following from \cite{10} or \cite{11}.
\begin{definition} \label{def:2p1}
A set $\mc T$ of measurable subsets of $X$ will be called a tree if the following are satisfied
\begin{enumerate}[i)]
\item $X\in\mc T$ and for every $I\in\mc T$, $\mu(I) > 0$.
\item For every $I\in\mc T$ there corresponds a finite of countable subset $C(I)$ of $\mc T$ containing at least two elements such that
\vspace{-5pt}
\begin{enumerate}[a)]
\item the elements of $C(I)$ are pairwise disjoint subsets of $I$.
\item $I = \cup\, C(I)$.
\end{enumerate}
\item $\mc T = \cup_{m\geq 0} \mc T_{(m)}$, where $\mc T_{(0)} = \left\{ X \right\}$ and $\mc T_{(m+1)} = \cup_{I\in \mc T_{(m)}} C(I)$.
\item The following holds
\[
\lim_{m\to\infty} \sup_{I\in \mc T_{(m)}} \mu(I) = 0
\]
\end{enumerate}
\end{definition}

\noindent We state now the following lemma as is given in \cite{10}.
\begin{lemma} \label{lem:2p1}
For every $I\in \mc T$ and every $\alpha\in (0,1)$ there exists a subfamily $\mc F(I) \subseteq \mc T$ consisting of pairwise disjoint subsets of $I$ such that
\[
\mu\!\left( \underset{J\in\mc F(I)}{\bigcup} J \right) = \sum_{J\in\mc F(I)} \mu(J) = (1-\alpha)\mu(I).
\]
\end{lemma}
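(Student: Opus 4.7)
The plan is to construct $\mathcal{F}(I)$ by a greedy descent through the subtree of $\mathcal{T}$ rooted at $I$. Set $\beta = (1-\alpha)\mu(I)$; the goal is a pairwise disjoint subfamily of $\mathcal{T}$ contained in $I$ whose measures sum exactly to $\beta$.

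I maintain, at each step $m \geq 0$, two disjoint subfamilies of $\mathcal{T}$: an ``accepted'' family $\mathcal{F}_m$ with running total $s_m := \sum_{J \in \mathcal{F}_m} \mu(J) \leq \beta$, and a ``residual'' family $R_m$ whose elements are pairwise disjoint, lie at tree-depth strictly increasing with $m$, and together partition $I \setminus \bigcup \mathcal{F}_m$ up to $\mu$-nullity. Starting from $\mathcal{F}_0 = \emptyset$ and $R_0 = \{I\}$, the step $m \to m+1$ works as follows: enumerate the children $\bigcup_{J \in R_m} C(J)$ in some order, walk through them, appending each to $\mathcal{F}$ if doing so keeps the running total $\leq \beta$, and otherwise placing it into $R_{m+1}$. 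Setting $\mathcal{F}(I) = \bigcup_m \mathcal{F}_m$ then produces a countable pairwise disjoint subfamily with $\sum_{J \in \mathcal{F}(I)} \mu(J) = \lim_m s_m \leq \beta$, and the equality $\mu\bigl(\bigcup \mathcal{F}(I)\bigr) = \sum_{J \in \mathcal{F}(I)} \mu(J)$ is automatic by countable additivity for disjoint sets.

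The heart of the argument is to show $\lim_m s_m = \beta$ exactly, not merely $\leq \beta$. The key observation is that every $J' \in R_{m+1}$ satisfies $\mu(J') > \beta - s_{m+1}$: indeed $J'$ was rejected precisely because the running total $s$ at that moment satisfied $s + \mu(J') > \beta$, and $s \leq s_{m+1}$. To apply this I also need $R_{m+1} \neq \emptyset$, which is forced by $\beta < \mu(I)$ (since $\alpha > 0$): if every child had been accepted, the new total would equal $\mu(I) > \beta$, contradicting $s_{m+1} \leq \beta$. Combining these facts with property (iv) of Definition \ref{def:2p1}, which yields $\sup_{J' \in R_{m+1}} \mu(J') \to 0$ as the depths tend to infinity, I obtain $\beta - s_{m+1} < \sup_{J' \in R_{m+1}} \mu(J') \to 0$, so $s_m \to \beta$. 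The main obstacle is precisely this exact-equality step; without property (iv) the greedy procedure would yield only $\sum \mu(J) \leq \beta$, and the remaining pieces of the claim (disjointness and the $\mu(\bigcup) = \sum$ identity) are immediate from the construction.
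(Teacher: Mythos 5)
Your proof is correct, and the argument is complete. The paper itself does not prove Lemma~\ref{lem:2p1} but simply cites it from \cite{3}; your greedy--descent/exhaustion construction is precisely the standard argument used there, and every step checks out: the running totals $s_m$ are nondecreasing and bounded by $\beta=(1-\alpha)\mu(I)$; the residual family $R_{m+1}$ is nonempty because accepting all children of $R_m$ would force $s_{m+1}=\mu(I)>\beta$; every rejected $J'\in R_{m+1}$ obeys $\mu(J')>\beta-s_{m+1}$; and since $R_m\subseteq\mathcal T_{(d+m)}$ (with $d$ the depth of $I$), property (iv) of Definition~\ref{def:2p1} gives $\sup_{J'\in R_m}\mu(J')\to 0$, forcing $s_m\to\beta$. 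Disjointness of $\mathcal F(I)$ and the identity $\mu\bigl(\bigcup\mathcal F(I)\bigr)=\sum_{J\in\mathcal F(I)}\mu(J)$ are immediate from the construction and countable additivity.
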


\noindent Suppose now that we are given a tree $\mc T$ on a nonatomic probability space $(X,\mu)$. Then we define the associated dyadic maximal operator $\mc M_{\mc T}$ by \eqref{eq:1p5}. (see the Introduction).

\begin{definition} \label{def:2p2}
Let $(\phi_n)_n$ be a sequence of $\mu$-measurable nonnegative functions defined on $X,\ q\in(0,1)$, $0<h\leq f^q$ and $L\geq f$. Then $(\phi_n)_n$ is called extremal if the following hold $\int_X \phi_n\,\mr d\mu = f$, $\int_X \phi_n^q\,\mr d\mu = h$ for every $n\in \mb N$, and $\lim_n \int_X \left[ \max\left( \mc M_{\mc T}\phi_n,L\right) \right]^q\mr d\mu = c h$, where $c = \omega_q\!\left( \frac{(1-q)L^q + qL^{q-1}f}{h} \right)$. (See Theorem \ref{thm:1}, relation \eqref{eq:1p8}).
\end{definition}

For the proof of Theorem \ref{thm:1} an effective linearization was introduced for the operator $\mc M_{\mc T}$ valid for certain functions $\phi$.
We describe it. For $\phi \in L^1(X,\mu)$ nonnegative function and $I\in \mc T$ we define $\Av_I(\phi) = \frac{1}{\mu(I)} \int_I \phi\,\mr d\mu$. We will say that $\phi$ is $\mc T$-good if the set
\[
\mc A_\phi = \left\{ x\in X: \mc M_{\mc T}\phi(x) > \Av_I(\phi)\ \text{for all}\ I\in \mc T\ \text{such that}\ x\in I \right\}
\]
has $\mu$-measure zero.

\noindent Let now $\phi$ be $\mc T$-good and $x\in X\setminus\mc A_\phi$. We define $I_\phi(x)$ to be the largest in the nonempty set
\[
\big\{I\in \mc T: x\in T\ \text{and}\ \mc M_{\mc T}\phi(x) = \Av_I(\phi)\big\}.
\]
Now given $I\in\mc T$ let
\begin{align*}
A(\phi,I) &= \big\{ x\in X\setminus\mc A_\phi: I_\phi(x)=I \big\} \subseteq I,\ \text{and} \\[2pt]
S_\phi &= \big\{ I\in\mc T: \mu\left(A(\phi,I)\right) > 0 \big\} \cup \big\{X\big\}.
\end{align*}
Obviously then, $\mc M_{\mc T}\phi = \sum_{I\in S_\phi} \Av_I(\phi) \mc X_{A(\phi,I)}$, $\mu$-almost everywhere on $X$, where $\mc X_S$ is the characteristic function of $S\subseteq X$.
We define also the following correspondence $I \to I^\star$ by: $I^\star$ is the smallest element of $\{ J\in S_\phi: I\subsetneq J \}$.
This is defined for every $I\in S_\phi$ except $X$. It is obvious that the $A(\phi,I)$'s are pairwise disjoint and that $\mu\!\left( \cup_{I\notin S_\phi} A(\phi,I) \right)=0$, so that $\cup_{I\in S_\phi} A(\phi,I) \approx X$, where by $A\approx B$ we mean that $\mu(A\setminus B) = \mu(B\setminus A) = 0$. \\
We will need the following
\begin{lemma} \label{lem:2p2}
Let $\phi$ be $\mc T$-good and $I\in \mc T$, $I \neq X$. Then $I\in S_\phi$ if and only if every $J\in \mc T$ that contains properly $I$ satisfies $\Av_J(\phi) < \Av_I(\phi)$.
\end{lemma}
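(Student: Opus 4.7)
The plan is to exploit the tree property (any two elements of $\mc T$ sharing a point are nested) together with the maximality built into the definition of $I_\varphi(x)$, and to finish the harder direction with an averaging/pigeonhole argument.

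For the forward direction, suppose $I \in S_\varphi$. Then $\mu(A(\varphi, I)) > 0$, so I fix some $x \in A(\varphi, I)$. By definition $I_\varphi(x) = I$, i.e.\ $I$ is the \emph{largest} element of $\mc T$ containing $x$ with $\Av_I(\varphi) = \mc M_{\mc T}\varphi(x)$. For any $J \in \mc T$ with $I \subsetneq J$ we automatically have $x \in J$, so maximality forces $\Av_J(\varphi) \neq \mc M_{\mc T}\varphi(x)$; combined with the trivial bound $\Av_J(\varphi) \leq \mc M_{\mc T}\varphi(x) = \Av_I(\varphi)$ from the definition of the maximal operator, this yields $\Av_J(\varphi) < \Av_I(\varphi)$, as required.

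For the reverse direction I assume every $J \supsetneq I$ in $\mc T$ satisfies $\Av_J(\varphi) < \Av_I(\varphi)$ and aim to show $\mu(A(\varphi,I)) > 0$. The key reduction is that for any $x \in I\setminus\mc A_\varphi$ with $\mc M_{\mc T}\varphi(x) = \Av_I(\varphi)$, the hypothesis automatically makes $I$ the largest element of $\mc T$ containing $x$ that realizes this supremum (all strictly larger $\mc T$-sets have strictly smaller average), so $x \in A(\varphi, I)$. Since $\mu(\mc A_\varphi)=0$, it therefore suffices to show $\mu(E) < \mu(I)$, where $E = \{x \in I : \mc M_{\mc T}\varphi(x) > \Av_I(\varphi)\}$.

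The main obstacle is controlling $\mu(E)$. For each $x \in E$ there is some $J \in \mc T$ with $x \in J$ and $\Av_J(\varphi) > \Av_I(\varphi)$; by the tree property $J$ is comparable with $I$, and by hypothesis $J$ can neither strictly contain nor equal $I$, so $J \subsetneq I$. Let $\mc J$ denote the family of maximal (under inclusion) elements of $\{J \in \mc T : J \subsetneq I,\ \Av_J(\varphi) > \Av_I(\varphi)\}$; these are pairwise disjoint and cover $E$. Then
\[
\Av_I(\varphi)\sum_{J \in \mc J} \mu(J) < \sum_{J \in \mc J}\Av_J(\varphi)\,\mu(J) = \sum_{J \in \mc J}\int_J \varphi\,\mr d\mu \leq \int_I \varphi\,\mr d\mu = \Av_I(\varphi)\,\mu(I),
\]
forcing $\mu(E) \leq \sum_{J \in \mc J}\mu(J) < \mu(I)$. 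The one technical point to justify carefully is existence of maximal elements, which follows from the stratification $\mc T = \bigcup_m \mc T_{(m)}$: any ascending chain inside $I$ has strictly decreasing depth and is therefore finite.
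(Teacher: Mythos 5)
Your forward direction is exactly the paper's: pick $x\in A(\varphi,I)$, use that $I=I_\varphi(x)$ is the \emph{largest} tree element through $x$ achieving the supremum, and combine with $\Av_J(\varphi)\le\mc M_{\mc T}\varphi(x)$ for $J\supsetneq I$.

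Your converse is correct but structured differently from the paper's. The paper argues by contradiction: assuming $\mu(A(\varphi,I))=0$, the $\mc T$-goodness of $\varphi$ gives, for a.e.\ $x\in I$, a set $J_x=I_\varphi(x)\in S_\phi$ with $J_x\subsetneq I$; the already-established forward implication then yields $\Av_{J_x}(\varphi)>\Av_I(\varphi)$, and extracting maximal $J_x$'s produces an a.e.\ partition of $I$ whose pieces all have strictly larger average than $I$, contradicting $\int_I\varphi\,\mr d\mu=\Av_I(\varphi)\,\mu(I)$. You instead argue directly: you identify $A(\varphi,I)$ up to the null set $\mc A_\varphi$ with $I\setminus E$, where $E=\{x\in I:\mc M_{\mc T}\varphi(x)>\Av_I(\varphi)\}$, and then bound $\mu(E)$ by covering $E$ with pairwise disjoint maximal $J\subsetneq I$, $J\in\mc T$, having $\Av_J(\varphi)>\Av_I(\varphi)$, and applying the very same mass-conservation estimate. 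The core averaging step is identical; what your arrangement buys is that the converse does not loop back through the forward direction, does not mention $S_\phi$ or $I_\varphi$ at all, and works with a cover of $E$ rather than a partition of $I$, so it is a little more self-contained. Two small things worth stating explicitly: the strict inequality $\Av_I(\varphi)\sum_{J\in\mc J}\mu(J)<\sum_{J\in\mc J}\int_J\varphi\,\mr d\mu$ presupposes $\mc J\ne\emptyset$ (if $\mc J=\emptyset$ then $E=\emptyset$ and there is nothing to prove, since $\mu(I)>0$ by Definition~\ref{def:2p1}), and the final deduction $\sum\mu(J)<\mu(I)$ uses $\Av_I(\varphi)>0$, which holds automatically whenever $\mc J\ne\emptyset$ because $\varphi\ge 0$. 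Your justification that maximal elements of $\mc J$ exist, via the stratification $\mc T=\bigcup_m\mc T_{(m)}$, is a point the paper actually leaves implicit, so it is a welcome addition.
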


\begin{proof}
Suppose that $I\in S_\phi$. Then $\mu(A(\phi,I))>0$. As a consequence $A(\phi,I)\neq \emptyset$, so there exists $x\in A(\phi,I)$. By the definition of $A(\phi,I)$ we have that $I_\phi(x)=I$, that is $I$ is the largest element of $\mc T$ such that $\mc M_{\mc T}\phi(x) = \Av_I(\phi)$. As a consequence the implication stated in our lemma holds.
Conversely now, suppose that $I\in \mc T$ and for every $J\in \mc T$ with $J \supsetneq I$ we have that $\Av_J(\phi) < \Av_I(\phi)$. Then since $\phi$ is $\mc T$-good, for every $x\in I\setminus \mc A_\phi$ there exists $J_x$ ($= I_\phi(x)$) in $S_\phi$ such that $\mc M_{\mc T}\phi(x) = \Av_{J_x}(\phi)$ and $x\in J_x$. By our hypothesis we must have that $J_x \subseteq I$.
Now, consider the family $S' = \left\{ J_x,\ x\in I\setminus \mc A_\phi \right\}$. This has the property $\cup_{x\in I\setminus \mc A_\phi} J_x \approx I$. Choose a subfamily $S^2 = \{J_1, J_2, \ldots\}$ of $S'$, maximal under the $\subseteq$ relation. Then $I\approx \cup_{i=1}^\infty J_i$ where the last union
is pairwise disjoint because of the maximality of $S^2$.
Suppose now that $I$ does not belong to $S_\phi$. This means that $\mu(A(\phi,I))=0$, that is we must have that for every $x\in I\setminus\mc A_\phi$, $J_x\subsetneq I$. Since $J_x$ belongs to $S_\phi$ for every such $x$, by the first part of the proof of this Lemma we conclude that $\Av_{J_x}(\phi) > \Av_I(\phi)$.
Thus for every $i$, we must have that $\Av_{J_i}(\phi) > \Av_I(\phi)$. Since $S^2$ is a partition of $I$, we reach to a contradiction. Thus we must have that $I\in S_\phi$.
\end{proof}

Now the following is true, obtained in \cite{3}.
\begin{lemma} \label{lem:2p3}
Let $\phi$ be $\mc T$-good
\begin{enumerate}[i)]
\item If $I, J\in S_\phi$ then either $A(\phi,J)\cap I = \emptyset$ or $J\subseteq I$.
\item If $I\in S_\phi$, then there exists $J\in C(I)$ such that $J\notin S_\phi$.
\item For every $I\in S_\phi$ we have that
\[
I \approx \underset{\substack{J\in S_\phi \\ J\subseteq I\ \,}}{\cup} A(\phi,J).
\]
\item For every $I\in S_\phi$ we have that
\begin{gather*}
A(\phi,I) = I\setminus \underset{\substack{J\in S_\phi \\ J^\star = I\ }}{\cup} J,\ \ \text{so thet} \\
\mu(A(\phi,I)) = \mu(I) - \sum_{\substack{J\in S_\phi\\ J^\star=I\ }} \mu(J).
\end{gather*}
\end{enumerate}
\end{lemma}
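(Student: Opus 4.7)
The four parts build cumulatively: (i) uses Lemma \ref{lem:2p2} together with the tree's nesting property (any two members of $\mc T$ sharing a point are comparable by inclusion), (ii) uses Lemma \ref{lem:2p2} alone, (iii) follows from (i), and (iv) uses (iii) twice. I handle them in this order. For (i), pick $x\in A(\varphi,J)\cap I$. Since $x\in J$ as well, tree nesting makes $I$ and $J$ comparable; if $J\not\subseteq I$ then $I\subsetneq J$ (so $I\ne X$), and Lemma \ref{lem:2p2} applied to $I\in S_\phi$ gives $\Av_J(\varphi)<\Av_I(\varphi)\le \mc M_{\mc T}\varphi(x)$, contradicting $I_\varphi(x)=J$, which forces $\mc M_{\mc T}\varphi(x)=\Av_J(\varphi)$. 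For (ii), if every $J\in C(I)$ lay in $S_\phi$, then Lemma \ref{lem:2p2} would give $\Av_J(\varphi)>\Av_I(\varphi)$ for each, while $\Av_I(\varphi)$ is the $\mu$-weighted average of these $\Av_J(\varphi)$'s, a contradiction.

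For (iii), I intersect the identity $\bigcup_{J\in S_\phi}A(\varphi,J)\approx X$ (already recorded in the text) with $I$. By (i), each $J\in S_\phi$ contributes either $\emptyset$ or, when $J\subseteq I$, the full set $A(\varphi,J)\subseteq J\subseteq I$, so $I\approx\bigcup_{J\in S_\phi,\,J\subseteq I}A(\varphi,J)$.

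For (iv), I first obtain the exact inclusion $A(\varphi,I)\subseteq I\setminus\bigcup_{J^\star=I}J$: if $x\in A(\varphi,I)\cap J$ with $J^\star=I$, then $I_\varphi(x)=I$ forces $\mc M_{\mc T}\varphi(x)=\Av_I(\varphi)$, whereas Lemma \ref{lem:2p2} applied to $J\in S_\phi$ (with $I\supsetneq J$) gives $\Av_J(\varphi)>\Av_I(\varphi)$, contradicting $\mc M_{\mc T}\varphi(x)\ge \Av_J(\varphi)$. The main obstacle is the reverse inclusion up to a null set, which requires reorganizing the union from (iii). Applying (iii) to $I$ splits $I$ modulo null sets as $A(\varphi,I)\cup\bigcup_{J\in S_\phi,\,J\subsetneq I}A(\varphi,J)$; every $J\in S_\phi$ with $J\subsetneq I$ has a unique ancestor $K\in S_\phi$ with $K^\star=I$ and $J\subseteq K$, obtained by iterating $\star$, and conversely applying (iii) to each such $K$ rewrites $K\approx\bigcup_{J\in S_\phi,\,J\subseteq K}A(\varphi,J)$. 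Assembling yields $\bigcup_{J\in S_\phi,\,J\subsetneq I}A(\varphi,J)\approx\bigcup_{K^\star=I}K$, hence $I\approx A(\varphi,I)\cup\bigcup_{K^\star=I}K$; this union is disjoint by the exact inclusion above, so $\sigma$-additivity gives both the set identity modulo null sets and the claimed measure identity.
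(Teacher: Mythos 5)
The paper does not prove Lemma \ref{lem:2p3}: it is stated as ``obtained in \cite{3},'' so there is no in-paper proof to compare your argument against. Your proof is correct and is essentially the natural one. Parts (i) and (ii) are direct consequences of Lemma \ref{lem:2p2} together with the nesting property of $\mc T$ (any two members of $\mc T$ sharing a point are comparable, which follows from Definition \ref{def:2p1}(ii)--(iii)), and (iii) follows from (i) exactly as you describe by intersecting $\cup_{J\in S_\phi}A(\varphi,J)\approx X$ with $I$. For (iv) your exact forward inclusion $A(\varphi,I)\subseteq I\setminus\cup_{J^\star=I}J$ is fine, and the reverse inclusion via two uses of (iii) together with the unique $\star$-ancestor $K$ satisfying $K^\star=I$ is sound. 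One detail you pass over silently is termination of the $\star$-iteration: this needs the observation that every element of $\mc T$ lies in some $\mc T_{(m)}$ and hence has only finitely many $\mc T$-ancestors, so the ascending chain $J\subsetneq J^\star\subsetneq\cdots$ inside $S_\phi$ below $I$ is finite and reaches a $K$ with $K^\star=I$. A somewhat more direct route to (iv), avoiding the double use of (iii), is to show directly that for $x\in I\setminus\mc A_\varphi$ lying in no $J$ with $J^\star=I$ one has $I_\varphi(x)=I$: since $x\in I_\varphi(x)\cap I$ the two are comparable, $I_\varphi(x)\supsetneq I$ is excluded by Lemma \ref{lem:2p2} applied to $I$, and $I_\varphi(x)\subsetneq I$ would, by the same $\star$-ancestor argument, place $x$ in some $J$ with $J^\star=I$, contradiction. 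Finally, note that what you actually prove is $A(\varphi,I)\approx I\setminus\cup_{J^\star=I}J$ (exact $\subseteq$, equality only up to the null set $\mc A_\varphi$), not the literal set equality printed in the lemma; but the literal equality cannot hold in general, since $A(\varphi,I)$ excludes $\mc A_\varphi$ by definition, and the approximate equality is exactly what is needed for the displayed measure identity, which you obtain correctly.
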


\section{Some technical Lemmas}
In this section we collect some technical results whose proofs can be seen in \cite{4}. We begin with

\begin{lemma} \label{lem:3p1}
Let $0<q<1$ be fixed. Then
\begin{enumerate}[i)]
\item The function $\omega_q: [1,+\infty) \to [1,+\infty)$ is strictly increasing and strictly concave.
\item The function $U_q(x) = \frac{\omega_q(x)}{x}$ is strictly increasing on $[1,+\infty)$.
\end{enumerate}
\end{lemma}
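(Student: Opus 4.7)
The plan is to work throughout with the parametrization $y = H_q^{-1}(z)$, so that $z = H_q(y) = (1-q)y^q + qy^{q-1}$ with $y \geq 1$ and $\omega_q(z) = y^q$. All monotonicity and concavity statements about $\omega_q$ can then be reduced, via the chain rule, to elementary statements about functions of the single variable $y$.

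First I would establish that $H_q:[1,\infty)\to[1,\infty)$ is a strictly increasing bijection. A direct computation gives $H_q(1)=1$ and
\[
H_q'(y) = q(1-q)y^{q-1} - q(1-q)y^{q-2} = q(1-q)\,y^{q-2}(y-1),
\]
which is strictly positive for $y>1$; combined with $H_q(y)\to\infty$ as $y\to\infty$ (since $q<1$), this gives the bijection and ensures $H_q^{-1}$ is strictly increasing. Consequently $\omega_q(z)=[H_q^{-1}(z)]^q$ is strictly increasing on $[1,\infty)$, which is half of (i).

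For the strict concavity in (i), differentiating the two relations $z=H_q(y)$ and $\omega_q(z)=y^q$ with respect to $y$ and dividing yields
\[
\omega_q'(z) \;=\; \frac{q y^{q-1}}{q(1-q)y^{q-2}(y-1)} \;=\; \frac{y}{(1-q)(y-1)}.
\]
Since $z$ is strictly increasing in $y$, it suffices to check that the right-hand side is strictly decreasing in $y$; and indeed
\[
\frac{d}{dy}\!\left(\frac{y}{y-1}\right) = -\frac{1}{(y-1)^2} < 0.
\]
This completes (i). (Behavior at the endpoint $y=1$, where $\omega_q'(z)\to+\infty$, only reinforces strict concavity.)

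For (ii), I would compute
\[
\frac{\omega_q(z)}{z} \;=\; \frac{y^q}{(1-q)y^q+qy^{q-1}} \;=\; \frac{y}{(1-q)y+q},
\]
and compare it with the derivative formula above. We have $U_q'(z)>0$ iff $\omega_q'(z) > \omega_q(z)/z$, which amounts to $(1-q)(y-1) < (1-q)y+q$, i.e. $-(1-q)<q$, which is automatic. Hence $U_q$ is strictly increasing on $(1,\infty)$, and continuity at $z=1$ (with $\omega_q'\to+\infty$ there) extends this to $[1,\infty)$.

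There is no genuine obstacle here; the only mild subtlety is the singular behavior of $\omega_q'$ at $z=1$, which must be acknowledged so that the inequalities established for $y>1$ are correctly transferred to statements on the closed interval $[1,\infty)$.
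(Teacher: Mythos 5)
Your proof is correct. The paper itself gives no proof of Lemma \ref{lem:3p1}, deferring to reference \cite{3}; your self-contained argument via the implicit parametrization $y = H_q^{-1}(z)$, which yields $\omega_q'(z) = \dfrac{y}{(1-q)(y-1)}$ and $\dfrac{\omega_q(z)}{z} = \dfrac{y}{(1-q)y+q}$ and reduces both parts to elementary single-variable monotonicity checks, is the natural route and the computations are sound. One small imprecision worth noting: $H_q(y)\to\infty$ as $y\to\infty$ relies on $q>0$ (so $y^q\to\infty$) together with $1-q>0$, not merely on $q<1$ as you wrote; this does not affect the argument.
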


\noindent For the next Lemma we consider for any $q\in (0,1)$ the following formula
\[
\sigma_q(k,x) = \frac{H_q\!\left(\frac{x(1-k)}{1-kx}\right)}{H_q(x)},
\]
defined for all $k, x$ such that $0<k<1$ and $0<x<\frac{1}{k}$. A straightforward computation shows (as is mentioned in \cite{4}) that
\[
\sigma_q(k,x) = \frac{(1-q)x + q - kx}{(1-k)^{1-q}(1-kx)^q((1-q)x+q)}.
\]
We state now the following
\begin{lemma} \label{lem:3p2}
\begin{enumerate}[i)]
\item For any fixed $\lambda>1$ the equation
\begin{equation} \label{eq:3p1}
H_q\!\left(\frac{x(1-k)}{1-kx}\right) = \lambda H_q(x),
\end{equation}
has a unique solution $x = \mc X(\lambda,k) = \mc X_\lambda(k)$ on the interval $\left(1,\frac 1 k\right)$ and it has a solution in the interval $(0,1)$ if and only if $\lambda<(1-k)^{q-1}$ in which case this is also unique.
\item For $\mu\geq 0$ define the following function
\begin{equation} \label{eq:3p2}
R_{q,\mu}(k,x) = \left(\frac{x(1-k)}{1-kx}\right)^q \frac{1}{\sigma_q(k,x)} + \left(\mu^q-x^q\right)(1-k),
\end{equation}
on $W=\big\{(k,x): 0<k<1\ \text{and}\ 1<x<\frac{1}{k}\big\}$. \\
Then if $\mu > 1$ and $\xi$ is in $(0,1]$ the maximum value of $R_{q,\mu}$ on the set $\left\{ (k,x)\in W: 0<k\leq \xi\ \text{and}\ \sigma_q(k,x)=\lambda\right\}$ is equal to $\frac{1}{\lambda}\omega_q(\lambda H_q(\mu))$ if $\xi\geq k_0(\lambda,\mu)$, where $k_0(\lambda,\mu)$ is given by
\begin{equation} \label{eq:3p3}
k_0(\lambda,\mu) = \frac{\omega_q(\lambda H_q(\mu))^\frac{1}{q} - \mu}{\mu\!\left(\omega_q(\lambda H_q(\mu))^\frac{1}{q}-1\right)},
\end{equation}
and is the unique in $\left(0, \frac 1 \mu\right)$ solution of the equation $\sigma_q(k_0,\mu)=\lambda$.
Additionally comparing with \rnum{1}) of this Lemma we have that $\mc X_\lambda(k_0)=\mu$.
\end{enumerate}
\end{lemma}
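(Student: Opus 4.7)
The plan is to treat the two parts separately. For part (i), I would reduce to a one-variable analysis of $x\mapsto\sigma_q(k,x)=H_q(\phi_k(x))/H_q(x)$, where $\phi_k(x)=(1-k)x/(1-kx)$. A direct computation gives $H_q'(z)=q(1-q)z^{q-2}(z-1)$, so $H_q$ has a strict minimum at $z=1$, decreases on $(0,1)$ and increases on $(1,\infty)$. The M\"obius-type map $\phi_k$ is strictly increasing on $(0,1/k)$, fixes $1$, maps $(0,1)$ onto itself and $(1,1/k)$ onto $(1,\infty)$. Reading off boundary values yields $\sigma_q(k,1)=1$, $\sigma_q(k,x)\to\infty$ as $x\to 1/k^-$, and $\sigma_q(k,x)\to (1-k)^{q-1}$ as $x\to 0^+$ (from $H_q(z)\sim qz^{q-1}$ near $0$). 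Strict monotonicity of $\sigma_q(k,\cdot)$ on each of $(0,1)$ and $(1,1/k)$, obtained by signing $\partial_x\sigma_q$ from its explicit rational form, then yields via the intermediate value theorem the unique solution in $(1,1/k)$ for every $\lambda>1$ and the unique solution in $(0,1)$ precisely when $\lambda<(1-k)^{q-1}$.

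For part (ii), the key idea is that part (i) lets one parametrize the constraint set $\{(k,x)\in W:\sigma_q(k,x)=\lambda\}$ as the curve $k\mapsto(k,\mc X_\lambda(k))$, and writing $y(k)=\phi_k(\mc X_\lambda(k))$ the constraint becomes $H_q(y)=\lambda H_q(x)$. The objective then reduces to
\[
R(k)=\frac{y(k)^q}{\lambda}+(\mu^q-x(k)^q)(1-k),
\]
a function of one variable. I would differentiate, eliminate $y'(k)$ using implicit differentiation of the constraint (which via $H_q'(z)=q(1-q)z^{q-2}(z-1)$ simplifies to $y'=\lambda x^{q-2}(x-1)x'/[y^{q-2}(y-1)]$), and eliminate $1-k$ using the algebraic consequence of the M\"obius relation, $1-k=y(x-1)/(x(y-1))$. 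The critical observation is that after these substitutions the two terms of $R'(k)$ proportional to $x'(k)$ cancel identically, so the first-order condition $R'(k)=0$ collapses to $\mu^q-x^q=0$, i.e., $x=\mu$. At this critical point the second term of $R$ vanishes and $y^q=\omega_q(\lambda H_q(\mu))$ by definition of $\omega_q$, yielding the claimed maximum value $\omega_q(\lambda H_q(\mu))/\lambda$. The corresponding $k_0$ is determined by $\phi_{k_0}(\mu)=\omega_q(\lambda H_q(\mu))^{1/q}$; solving algebraically recovers formula (3.3), $\mc X_\lambda(k_0)=\mu$ is immediate, and uniqueness of $k_0\in(0,1/\mu)$ follows from an analogue of part (i) with the roles of $k$ and $x$ interchanged (since $k\mapsto\sigma_q(k,\mu)$ is strictly increasing from $1$ to $\infty$ on $(0,1/\mu)$).

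To confirm that this critical point is the global maximum on $(0,\xi]$ rather than a saddle, I would examine the left endpoint: as $k\to 0^+$ the constraint forces $\mc X_\lambda(k)\to\infty$ (since $\sigma_q(0,\cdot)\equiv 1$ and $\lambda>1$), so $(\mu^q-x(k)^q)(1-k)\to -\infty$ and hence $R(k)\to -\infty$. Combined with uniqueness of the interior critical point, this is enough to conclude that the maximum is attained exactly at $k_0$ whenever $k_0\in(0,\xi]$, which is the stated condition $\xi\geq k_0(\lambda,\mu)$. The main obstacle I anticipate is precisely the algebraic cancellation in the derivative computation of part (ii): this is where the whole argument either succeeds cleanly or degenerates into opaque manipulation, and it rests on combining the two substitutions for $y'(k)$ and for $1-k$ in exactly the right order. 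Part (i) is then subsidiary, its main effort being a routine sign analysis of $\partial_x\sigma_q$ from the given rational expression.
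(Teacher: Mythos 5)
The paper does not actually prove Lemma~\ref{lem:3p2}; Section~3 explicitly states that the proofs of these technical results can be found in \cite{3}, so there is no in-paper proof to compare with. Assessed on its own merits, your proposal has the right architecture for both parts and the key cancellation in (ii) is genuinely correct: with $y(k)=\phi_k(\mc X_\lambda(k))$, the implicit relation $y^{q-2}(y-1)y'=\lambda x^{q-2}(x-1)x'$ and the M\"obius identity $1-k=\tfrac{y(x-1)}{x(y-1)}$ do combine to kill the $x'$-terms, leaving $R'(k)=x(k)^q-\mu^q$, and the algebra recovering \eqref{eq:3p3} checks out.

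The gap is in your final globality step. You claim that $R(k)\to-\infty$ as $k\to 0^+$ because $(\mu^q-x(k)^q)(1-k)\to-\infty$, but this ignores that the first term $y(k)^q/\lambda$ also blows up at the same rate. Indeed, from the constraint $H_q(y)=\lambda H_q(x)$ one has $(1-q)y^q+qy^{q-1}=\lambda(1-q)x^q+\lambda qx^{q-1}$, so $\tfrac{y^q}{\lambda}-x^q=\tfrac{q}{1-q}\bigl(x^{q-1}-\tfrac{y^{q-1}}{\lambda}\bigr)\to 0$ as $x\to\infty$; writing $R(k)=\bigl(\tfrac{y^q}{\lambda}-x^q\bigr)+kx^q+\mu^q(1-k)$ and noting $kx(k)^q\sim k^{1-q}(1-\lambda^{-1/q})^q\to 0$, one finds $R(k)\to\mu^q$, a finite limit — not $-\infty$. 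So uniqueness of the critical point plus your boundary behavior does not by itself force a global max. The correct and cleaner finish uses the identity you already derived: since $\partial_k\sigma_q=\tfrac{H_q'(\phi_k(x))\,x(x-1)}{H_q(x)(1-kx)^2}>0$ and $\partial_x\sigma_q>0$ on $x>1$ (your part (i) computation), implicit differentiation gives $\mc X_\lambda'(k)<0$, so $x(k)$ decreases strictly from $+\infty$ to $1$; hence $R'(k)=x(k)^q-\mu^q$ is positive on $(0,k_0)$ and negative on $(k_0,1)$, which makes $k_0$ the unique global maximizer directly. With that replacement your proof is complete; as written, the endpoint argument is false.
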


\noindent Now for the next Lemma we fix real numbers $f,\ h$ and $k$ with $0<h<f^q$ and $0<k<1$, and we consider the functions
\[
\ell_k(B) = (1-k)^{1-q}(f-B)^q + k^{1-q}B^q,
\]
defined for $0\leq B\leq f$ and
\begin{equation} \label{eq:3p4}
R_k(B) = \left\{ \begin{aligned}
& \left( h - (1-k)^{1-q}(f-B)^q \right) \omega_q\!\left( \frac{k^{1-q}B^q}{h - (1-k)^{1-q}(f-B)^q} \right), \\
& \hphantom{\frac{k^{1-q}B^q}{1-q},}\hspace{70pt} \text{if}\ (1-k)^{1-q}(f-B)^q< h\leq \ell_k(B), \\[5pt]
& \frac{k^{1-q}B^q}{1-q},\hspace{70pt} \text{if}\ h\leq (1-k)^{1-q}(f-B)^q,
\end{aligned} \right.
\end{equation}
defined for all $B\in [0,f]$ such that $\ell_k(B)\geq h$. \\
Noting that $\ell_k$ has an absolute maximum at $B=kf$ with $\ell_k(kf) = f^q > h$ and that it is monotone on each of the intervals $(0, kf)$ and $(kf, f)$ we conclude that either $\ell_k(f) \le h$ i.e. $k^{1-q}f^q < h$, in which case the equation $\ell_k(B) = h$ has a unique solution in $(kf,f)$ and this is denoted by $\rho_1 = \rho_1(f,h,k)$, or $\ell_k(f) \geq h$, in which case we set $\rho_1 = \rho_1(f,h,k) = f$.
Also either $\ell_k(0)<h$, i.e. $(1-k)^{1-q}f^q < h$ in which case the equation $\ell_k(B)=h$ has a unique solution on $(0,kf)$ and this is denoted by $\rho_0 = \rho_0(f,h,k)$, or $\ell_k(0)\geq h$ in which case we set $\rho_0 = \rho_0(f,h,k) = 0$. In all cases the domain of definition of $R_k$ is the interval $W_k = W_k(f,h) = [\rho_0,\rho_1]$. \\
We are now able to give the following
\begin{lemma} \label{lem:3p3}
The maximum value of the function $R_k$ on $W_k$ is attained at the unique point $B^\star = \mc X_\lambda(k)kf > kf$ where $\lambda = \frac{f^q}{h}$ (see Lemma \ref{lem:3p2}). Moreover
\begin{equation} \label{eq:3p5}
\max_{W_k}(R_k) = h\,\omega_q\!\left(\frac{f^q}{h} H_q(\mc X_\lambda(k)) \right) - (1-k)f^q(\mc X_\lambda(k))^q.
\end{equation}
Additionally $B^\star$ satisfies
\[
(1-k)^{1-q}(f-B^\star)^q < h < \ell_k(B^\star).
\]
\end{lemma}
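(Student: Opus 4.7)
My plan is to split $W_k$ into the two sub-intervals on which $R_k$ is given by the two cases in \eqref{eq:3p4}, and show that the maximum is attained on the upper (case-1) branch. On the lower branch $R_k(B)=k^{1-q}B^q/(1-q)$ is manifestly strictly increasing in $B$. The two branches match continuously at the boundary point $B_0=f-(h/(1-k)^{1-q})^{1/q}$ thanks to the asymptotic $\omega_q(z)/z\to 1/(1-q)$ as $z\to\infty$ (which follows from $H_q(y)\sim(1-q)y^q$ for large $y$). So the supremum of $R_k$ on $W_k$ is attained inside the sub-interval on which $(1-k)^{1-q}(f-B)^q<h\leq\ell_k(B)$, where $R_k$ is smooth and the first branch of \eqref{eq:3p4} applies.

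On this region I write $R_k(B)=g(B)\,\omega_q(z(B))$ with $g(B)=h-(1-k)^{1-q}(f-B)^q$ and $z(B)=k^{1-q}B^q/g(B)$. Using $g(B)z(B)=k^{1-q}B^q$ to eliminate $z'$ gives
\[
R_k'(B)=g'(B)\bigl[\omega_q(z)-z\,\omega_q'(z)\bigr]+q\,k^{1-q}B^{q-1}\,\omega_q'(z).
\]
Setting $y=\omega_q(z)^{1/q}$, so that $H_q(y)=z$, an explicit differentiation of $H_q$ yields $\omega_q'(z)=y/[(1-q)(y-1)]$ and $\omega_q(z)-z\,\omega_q'(z)=-y^q/[(1-q)(y-1)]$. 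Substituting these, the equation $R_k'(B)=0$ collapses to $y^{q-1}=\bigl(k(f-B)/[(1-k)B]\bigr)^{1-q}$, that is $y=(1-k)B/[k(f-B)]$.

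Parametrize now $B=xkf$ with $x\in(0,1/k)$. The critical relation becomes $y=(1-k)x/(1-kx)$, and plugging this into $H_q(y)=z(B)=kx^qf^q/[h-(1-k)^{1-q}f^q(1-kx)^q]$ and clearing denominators reduces (using $(1-q)x+q-kx=(1-q)(1-k)x+q(1-kx)$ to collect terms) to
\[
H_q\!\left(\frac{(1-k)x}{1-kx}\right)=\lambda\,H_q(x),\qquad \lambda=f^q/h>1,
\]
which is exactly the equation defining $\mathcal{X}_\lambda(k)$ in Lemma \ref{lem:3p2}(i). Hence there is a unique critical point $B^\star=\mathcal{X}_\lambda(k)\,kf$, and $\mathcal{X}_\lambda(k)>1$ gives $B^\star>kf$. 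Writing $y^\star=(1-k)x^\star/(1-kx^\star)$ with $x^\star=\mathcal{X}_\lambda(k)$, the identity $\sigma_q(k,x^\star)=\lambda$ gives $H_q(y^\star)=\lambda H_q(x^\star)$, so $(y^\star)^q=\omega_q(\lambda H_q(x^\star))$. Expanding $g(B^\star)=h-(1-k)^{1-q}f^q(1-kx^\star)^q$ in $R_k(B^\star)=g(B^\star)(y^\star)^q$ and collecting then produces \eqref{eq:3p5}.

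For the final strict inequalities, $y^\star>1$ (which follows from $x^\star>1$ and $0<kx^\star<1$) gives $g(B^\star)>0$, i.e.\ $(1-k)^{1-q}(f-B^\star)^q<h$; uniqueness of the critical point combined with the boundary values $R_k(B_0^+)=k^{1-q}B_0^q/(1-q)$ and $R_k(\rho_1)=k^{1-q}\rho_1^q$ (at an interior $\rho_1<f$, where $z(\rho_1)=1$ and $\omega_q(1)=1$) shows that $B^\star$ lies strictly in the interior of $W_k$, whence $h<\ell_k(B^\star)$; and a sign analysis of $R_k'$ forces $B^\star$ to be the unique maximizer rather than a minimum. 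The main obstacle will be the algebraic identity that converts the critical equation into Lemma \ref{lem:3p2}(i)'s equation, a manipulation requiring careful bookkeeping of the factors $(1-kx)^q$ and $(1-k)^{1-q}$; once that matching is in hand, the rest of the argument is a routine verification that the critical point satisfies the required strict inequalities and is a genuine maximum.
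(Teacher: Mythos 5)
The paper itself does not prove Lemma~\ref{lem:3p3}; it refers the reader to \cite{3}, so there is no in-house argument to compare against. Your calculus-optimization strategy is nonetheless the natural one and your central computations check out: the formula $R_k'(B)=g'(B)[\omega_q(z)-z\omega_q'(z)]+qk^{1-q}B^{q-1}\omega_q'(z)$, the identities $\omega_q'(z)=y/[(1-q)(y-1)]$ and $\omega_q(z)-z\omega_q'(z)=-y^q/[(1-q)(y-1)]$ with $y=H_q^{-1}(z)$, the collapse of $R_k'=0$ to $y=(1-k)B/[k(f-B)]$, and the identification of the critical equation with $\sigma_q(k,x)=\lambda$ after the substitution $B=xkf$ are all correct. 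Using $H_q(x)=x^{q-1}[(1-q)x+q]$ and $H_q(x)-kx^q=x^{q-1}[(1-q)x+q-kx]$, one gets $\sigma_q(k,x)=\frac{H_q(x)-kx^q}{H_q(x)(1-k)^{1-q}(1-kx)^q}$, which matches the defining formula and turns your critical relation into $\sigma_q(k,x)=\lambda$ exactly. The evaluation of $R_k(B^\star)$ via $(y^\star)^q=\omega_q(\lambda H_q(x^\star))$ and $(1-k)^{1-q}(1-kx^\star)^q(y^\star)^q=(1-k)(x^\star)^q$ is also correct.

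The last paragraph is where the argument is looser than you acknowledge. The inference ``$y^\star>1$ gives $g(B^\star)>0$'' is not a direct implication; what does the work is the identity that falls out of $\sigma_q(k,x^\star)=\lambda$, namely $H_q(x^\star)\,g(B^\star)=hk(x^\star)^q>0$, which gives $g(B^\star)>0$ immediately. Similarly, the boundary-value comparison $R_k(B_0^+)=k^{1-q}B_0^q/(1-q)$ versus $R_k(\rho_1)=k^{1-q}\rho_1^q$ by itself cannot locate $B^\star$, since $B_0<\rho_1$ makes the two values incomparable without more information. The clean route to $h<\ell_k(B^\star)$ is algebraic: $\ell_k(B^\star)-h=k(x^\star)^qf^q-g(B^\star)$, and by the identity above this is positive iff $f^qH_q(x^\star)>h$, i.e.\ $\lambda H_q(x^\star)>1$, which holds strictly since $\lambda>1$ and $H_q(x^\star)>H_q(1)=1$ for $x^\star>1$. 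Finally, the ``sign analysis of $R_k'$'' needs to be spelled out: as $B\to B_0^+$ one has $y\to\infty$ and the bracket $-g'(B)y^q+qk^{1-q}B^{q-1}y$ is dominated by the positive linear-in-$y$ term (since $q<1$), so $R_k'>0$; as $B\to\rho_1^-$ (interior case) one has $y\to 1^+$ and the bracket tends to $-g'(\rho_1)+qk^{1-q}\rho_1^{q-1}=\ell_k'(\rho_1)<0$ (because $\rho_1>kf$ and $\ell_k$ is decreasing on $(kf,f)$), so $R_k'<0$. Combined with uniqueness of the critical point this pins $B^\star$ strictly inside $(B_0,\rho_1)$ and forces it to be the maximizer. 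With those three points filled in, your proof is complete.
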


The above Lemmas are enough for us to study the extremal sequences for \eqref{eq:1p8} as we shall see in the next Section.

\section{Extremal sequences for the Bellman function}
We prove the following
\begin{theorem} \label{thm:4p1}
Let $\phi$ be $\mc T$-good function such that $\int_X \phi\,\mr d\mu = f$. Let also $B=\{I_j\}_j$ be a family of pairwise disjoint elements of $S_\phi$, which is maximal on $S_\phi$ under $\subseteq$ relation. That is $I\in S_\phi \Rightarrow I\cap (\cup I_j) \neq \emptyset$. Then the following inequality holds
\begin{multline*}
\int_{X\setminus \cup_j I_j} (\mc M_{\mc T}\phi)^q\,\mr d\mu \leq \\
\frac{1}{(1-q)\beta} \left[ (\beta+1) \left(f^q - \sum\mu(I_j)y_{I_j}^q\right) - (\beta+1)^q \int_{X\setminus \cup_j I_j} \phi^q\,\mr d\mu \right]
\end{multline*}
for every $\beta>0$, where $y_{I_j} = \mathrm{Av}_{I_j}(\phi)$.
\end{theorem}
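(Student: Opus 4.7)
The plan is to exploit the linearization of $\mc M_{\mc T}$ recalled in Section~\ref{sec:2}. Let $T = \{I \in S_\phi : I \supsetneq I_j$ for some $j\}$. Using the pairwise disjointness of the $I_j$'s together with Lemma~\ref{lem:2p3}(iv), one checks that $A(\phi, I) \subseteq E := X \setminus \cup_j I_j$ for every $I \in T$, while every other atom $A(\phi, J)$ with $J \in S_\phi \setminus T$ sits inside some $I_k$. Since the atoms partition $X$ up to a null set,
\[
\int_E (\mc M_{\mc T}\phi)^q\,\mr d\mu = \sum_{I \in T} y_I^q\,\mu(A(\phi, I)), \qquad \int_E \phi^q\,\mr d\mu = \sum_{I \in T} \int_{A(\phi, I)} \phi^q\,\mr d\mu,
\]
where $y_I = \Av_I(\phi)$.

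The scalar input I would use is Bernoulli's inequality $(\beta+1)^q \leq 1 + q\beta$, which after rearrangement gives
\[
(1-q)\beta\, a^q + (\beta+1)^q b^q \leq (\beta+1)\, a^q \qquad \text{for all } 0 \leq b \leq a.
\]
Since $\phi(x) \leq y_I$ on $A(\phi, I)$, applying this with $a = y_I$, $b = \phi(x)$, integrating, and summing over $I \in T$ yields
\[
(1-q)\beta \int_E (\mc M_{\mc T}\phi)^q\,\mr d\mu + (\beta+1)^q \int_E \phi^q\,\mr d\mu \leq (\beta+1) \int_E (\mc M_{\mc T}\phi)^q\,\mr d\mu,
\]
which is of the right shape but falls short of the target: the right side has $(\beta+1)\int_E (\mc M_{\mc T}\phi)^q$ instead of $(\beta+1)[f^q - \sum_j y_{I_j}^q \mu(I_j)]$.

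To close the gap, I would combine this pointwise step with the telescoping identity coming from the recursion $y_I \mu(I) = \int_{A(\phi, I)}\phi\,\mr d\mu + \sum_{J \in S_\phi,\, J^* = I} y_J \mu(J)$. The sum $\sum_{I \in S_\phi} y_I^q \mu(I) - \sum_{J \in S_\phi \setminus \{X\}} y_{J^*}^q \mu(J)$ collapses to $y_X^q = f^q$, and splitting $S_\phi$ according to whether an element belongs to $T$, to $B = \{I_j\}_j$, or sits strictly inside some $I_k$ produces
\[
\int_E (\mc M_{\mc T}\phi)^q\,\mr d\mu = f^q - \sum_j y_{I_j}^q\mu(I_j) + R, \qquad R := \sum_{I \in S_\phi \setminus\{X\}}(y_I^q - y_{I^*}^q)\mu(I),
\]
with $R \geq 0$ by Lemma~\ref{lem:2p2}. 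Substituting this into the naive estimate reduces the theorem to the inequality
\[
(1-q)\beta R + (\beta+1)^q\int_E \phi^q\,\mr d\mu \leq (1+q\beta)\Bigl[f^q - \sum_j y_{I_j}^q \mu(I_j)\Bigr].
\]

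The hard part is the proof of this reduced inequality, which requires a chain-level sharpening of Bernoulli that simultaneously absorbs the telescoping correction $R$ and the $(\beta+1)^q$ factor in front of $\int_E \phi^q$. Concretely, for each ancestor chain $I_j \subsetneq I_j^* \subsetneq \cdots \subsetneq X$ one would apply a weighted Jensen/Bernoulli estimate level by level, using the strict monotonicity $y_J > y_I$ whenever $J \subsetneq I$ in $S_\phi$ (Lemma~\ref{lem:2p2}) to make each step quantitatively strict. The delicate combinatorics arises because a single $I \in T$ may be a common ancestor of several $I_k$'s and its contribution must not be double-counted across chains; this is where the detailed bookkeeping, following the strategy of \cite{4}, does the actual work.
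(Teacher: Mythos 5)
Your reduction stops short of a proof: you openly defer the ``reduced inequality''
\[
(1-q)\beta R + (\beta+1)^q\int_E \phi^q\,\mr d\mu \leq (1+q\beta)\Bigl[f^q - \sum_j y_{I_j}^q \mu(I_j)\Bigr]
\]
to ``detailed bookkeeping, following the strategy of \cite{4}.'' That is the entire content of the theorem; as written, the argument is incomplete. Moreover, the reformulation is not obviously easier than the original statement, and it is not the route the paper takes. A smaller slip: your telescoping expression for $R$ sums over all of $S_\phi\setminus\{X\}$, whereas the telescoping you invoke only produces terms indexed by $J$ with $J^\star\in T$, i.e.\ $J\in (T\setminus\{X\})\cup B$; the contributions from cubes strictly inside the $I_k$'s should not appear.

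The paper's proof does not pass through a pointwise Bernoulli estimate at all. Its key device is a \emph{weighted} concavity step applied to the recursion $y_I\mu(I) = \sum_{J^\star=I} y_J\mu(J) + \alpha_I x_I$: one splits the unit mass as $\tau_I(\mu(I)-\alpha_I) + \sigma_I\alpha_I = 1$ with the specific choice $\sigma_I = ((\beta+1)\mu(I)-\beta\alpha_I)^{-1}$, $\tau_I=(\beta+1)\sigma_I$, applies Jensen for $t\mapsto t^q$, then bounds $x_I^q$ from below via H\"older on $A(\phi,I)$. Summing over $I\supsetneq \mathrm{piece}(B)$ telescopes the $\tau$-terms, and the residual factor $(\beta+1)^{1-q} - ((\beta+1)-\beta\rho_I)^{1-q}$ is controlled from below by the mean value theorem, producing precisely the $(1-q)(\beta+1)^{-q}\beta\alpha_I$ weight that turns the telescoped sum into $(1-q)\beta\int_{X\setminus\cup I_j}(\mc M_{\mc T}\phi)^q\,\mr d\mu$. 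Your pointwise step $(1-q)\beta a^q + (\beta+1)^q b^q\leq(\beta+1)a^q$ with $a=y_I$, $b=\phi(x)$ throws away the information that is essential here, namely that the atom average $x_I$ controls the $L^q$ mass on the atom and that the $I$-level recursion couples a node to its children; this is exactly what the paper's choice of $\tau_I,\sigma_I$ is engineered to exploit, and it is the idea missing from your argument.
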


\begin{proof}
We follow \cite{4}.

Let $S = S_\phi$, $\alpha_I = \mu(A(\phi,I))$, $\rho_1 = \frac{\alpha_I}{\mu(I)}\in(0,1]$ and
\[
y_I = \Av_I(\phi) = \frac{1}{\mu(I)} \sum_{J\in S: J\subseteq I} \alpha_J x_J,\ \ \text{for every}\ \ I\in S,
\]
where $x_J=  \frac{1}{\alpha_J} \int_{A(\phi,J)} \phi\,d\mu$, for any $J \in S_\phi$.
It is easy now to see in view of Lemma \ref{lem:2p3} \rnum{4}) that
\[
y_I\mu(I) = \sum_{J\in S: J^\star=I} y_J\mu(J) + \alpha_I x_I,
\]
and so by using concavity of the function $t\to t^q$, we have for any $I\in S$,
\begin{align} \label{eq:4p1}
[y_i\mu(I)]^q &= \left( \sum_{J\in S: J^\star=I} y_J\mu(J) + \alpha_Ix_I\right)^q \notag \\
 &= \left( \sum_{J\in S: J^\star = I} \tau_I\mu(J)\frac{y_I}{\tau_I} + \sigma_I\alpha_I\frac{x_I}{\sigma_I}\right)^q \notag \\
 &\geq \sum_{J\in S: J^\star = I} \tau_I\mu(J)\left(\frac{y_J}{\tau_I}\right)^q + \sigma_I\alpha_I\left(\frac{x_I}{\sigma_I}\right)^q,
\end{align}
where $\tau_I, \sigma_I > 0$ satisfy
\[
\tau_I(\mu(I)-\alpha_I) + \sigma_I\alpha_I = \sum_{J\in S: J^\star=I} \tau_I\mu(J) + \sigma_I\alpha_I = 1.
\]
We now fix $\beta>0$ and let
\[
\sigma_I = ((\beta+1)\mu(I) - \beta\alpha_I)^{-1},\ \ \tau_I = (\beta+1)\sigma_I
\]
which satisfy the above relation and thus we get by dividing with $\sigma_I^{1-q}$ that
\begin{equation} \label{eq:4p2}
((\beta+1)\mu(I) - \beta\alpha_I)^{1-q} (y_I\mu(I))^q \geq \sum_{J\in S: J^\star=I} (\beta+1)^{1-q} \mu(J) y_J^q + \alpha_Ix_I^q,
\end{equation}
However,
\begin{equation} \label{eq:4p3}
x_I^q = \left( \frac{1}{\alpha_I} \int_{A(\phi,I)} \phi\,d\mu\right)^q \geq \frac{1}{\alpha_I}\int_{A(\phi,I)} \phi^q\,\mr d\mu.
\end{equation}
We sum now \eqref{eq:4p2} over all $I\in S$ such that $I\supsetneq I_j$ for some $j$ (which we denote by $I\supsetneq \mr{piece}(B)$) and we obtain
\begin{multline} \label{eq:4p4}
\sum_{I\supsetneq \mr{piece}(B)} ((\beta+1)\mu(I) - \beta\alpha_I)^{1-q} (y_I\mu(I))^q \geq\\
\sum_{\substack{I\supsetneq \mr{piece}(B)\\ I\neq X}} (\beta+1)^{1-q} \mu(I) y_I^q + \sum_j (\beta+1)^{1-q} \mu(I_j) y_{I_j}^q + \sum_{I\supsetneq \mr{piece}(B)} a_I x_I^q.
\end{multline}
Note that the first two sums are produced in \eqref{eq:4p4} because of maximality of $(I_j)$. \eqref{eq:4p4} now gives:
\begin{multline} \label{eq:4p5}
\sum_{I\supsetneq \mr{piece}(B)} (\beta+1)^{1+q}\mu(I)y_I^q - \sum_{I\supsetneq \mr{piece}(B)} ((\beta+1)\mu(I) - \beta\alpha_I)^{1-q} (y_I\mu(I))^q \leq \\
(\beta+1)^{1-q}y_X^q - \int_{X\setminus \cup I_j} \phi^q\,\mr d\mu - \sum_j (\beta+1)^{1-q}\mu(I_j)y_{I_j}^q,
\end{multline}
in view of H\"{o}lder's inequality \eqref{eq:4p3}. Thus \eqref{eq:4p5} gives
\begin{multline} \label{eq:4p6}
\sum_{I\supsetneq \mr{piece}(B)} \left[(\beta+1)^{1-q}\mu(I) - ((\beta+1)\mu(I) - \beta\alpha_I)^{1-q}\mu(I)^q\right]y_I^q \leq \\
(\beta+1)^{1-q} \left(f^q - \sum\mu(I_j)y_{I_j}^q\right) - \int_{X\setminus \cup I_j} \phi^q\,\mr d\mu,
\end{multline}
On the other side we have that
\begin{align} \label{eq:4p7}
& \frac{1}{\mu(I)} \left[ (\beta+1)^{1-q}\mu(I) - ((\beta+1)\mu(I) - \beta\alpha_I)^{1-q}\mu(I)^q\right]  =  \notag \\
& (\beta+1)^{1-q} - ((\beta+1) - \beta\rho_I)^{1-q} \geq
 (1-q)(\beta+1)^{-q}\beta\rho_I = \notag \\
& (1-q)(\beta+1)^{-q}\beta\frac{\alpha_I}{\mu(I)},
\end{align}
where the inequality in \eqref{eq:4p7} comes from the differentiation mean value theorem on calculus.

From the last two inequalities we conclude
\begin{equation} \label{eq:4p8}
(1-q)(\beta+1)^{-q}\beta \!\! \sum_{I\supsetneq \mr{piece}(B)} a_Iy_I^q \leq
(\beta+1)^{1-q} \left( f^q - \sum\mu(I_j)y_{I_j}^q\right) - \int_{X\setminus \cup I_j} \!\!\phi^q\,\mr d\mu.
\end{equation}
Now it is easy to see that
\[
\sum_{I\supsetneq \mr{piece}(B)} a_Iy_I^q = \int_{X\setminus \cup I_j} (\mc M_{\mc T}\phi)^q\,\mr d\mu,
\]
because $B=\{I_j\}_j$ is a family of elements of $S_\phi$. Then \eqref{eq:4p8} becomes
\begin{multline*}
\int_{X\setminus \cup I_j} (\mc M_{\mc T}\phi)^q\,\mr d\mu \leq \\
\frac{1}{(1-q)\beta} \left[ (\beta+1)\bigg(f^q - \sum_j \mu(I_j)y_{I_j}^q\bigg) - (\beta+1)^q\int_{X\setminus \cup I_j} \phi^q\,\mr d\mu \right]
\end{multline*}
for any fixed $\beta>0$, and $\phi: \mc T$-good.

In this way we derived the proof of Theorem \ref{thm:4p1}.
\end{proof}

In the same lines as above we can prove:
\begin{theorem} \label{thm:4p2}
Let $\phi$ be $\mc T$-good and $\mc A=\{I_j\}$ be a pairwise disjoint family of elements of $S_\phi$. Then for every $\beta>0$ we have that:
\[
\int_{\cup I_j} (\mc M_{\mc T}\phi)^q\,\mr d\mu \leq
\frac{1}{(1-q)\beta} \left[ (\beta+1) \sum \mu(I_j)y_{I_j}^q - (\beta+1)^q \int_{\cup I_j} \phi^q\,\mr d\mu \right].
\]
\end{theorem}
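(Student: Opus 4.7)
The plan is to repeat the argument of Theorem~\ref{thm:4p1} line by line, merely summing the pointwise inequality \eqref{eq:4p2} over a different index set. Inequality \eqref{eq:4p2} is a \emph{local} statement attached to each $I\in S_\phi$ (it comes from the concavity of $t\mapsto t^q$ applied to $y_I\mu(I)=\sum_{J:J^\star=I} y_J\mu(J)+\alpha_Ix_I$, together with the specific choice of weights $\sigma_I,\tau_I$), so it remains available unchanged, with no global hypothesis on the family $\mc A$.

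First I sum \eqref{eq:4p2} over all $I\in S_\phi$ with $I\subseteq \cup_j I_j$. The inner sum on the right-hand side, running over pairs $(I,J)$ with $J^\star=I$ and $I\subseteq \cup_j I_j$, collapses to a sum over those $J\in S_\phi$ with $J\subsetneq I_k$ for some $k$: by the pairwise disjointness of $\mc A$, no $J=I_k$ may appear, since its parent $I_k^\star$ strictly contains $I_k$ and hence cannot be contained in any $I_j$. After collecting like terms and applying H\"older's inequality \eqref{eq:4p3} in the form $\sum_{I\subseteq \cup_j I_j} \alpha_I x_I^q \geq \int_{\cup_j I_j} \phi^q\,\mr d\mu$, one arrives at the analog of \eqref{eq:4p6}:
\[
\sum_{I\subseteq \cup_j I_j} \!\Big[(\beta+1)^{1-q}\mu(I) - ((\beta+1)\mu(I) - \beta\alpha_I)^{1-q}\mu(I)^q\Big] y_I^q \leq (\beta+1)^{1-q} \sum_j \mu(I_j)y_{I_j}^q - \int_{\cup_j I_j}\! \phi^q\,\mr d\mu.
\]

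Second, applying the mean-value estimate \eqref{eq:4p7} term by term bounds the left-hand side from below by $(1-q)(\beta+1)^{-q}\beta\sum_{I\subseteq \cup_j I_j}\alpha_Iy_I^q$, which equals $(1-q)(\beta+1)^{-q}\beta\int_{\cup_j I_j}(\mc M_{\mc T}\phi)^q\,\mr d\mu$, since $\mc M_{\mc T}\phi=y_I$ on $A(\phi,I)$ and, by Lemma~\ref{lem:2p3}\,i), $A(\phi,I)\cap(\cup_j I_j)=\emptyset$ whenever $I\not\subseteq\cup_j I_j$. Dividing by $(1-q)(\beta+1)^{-q}\beta$ and absorbing the resulting factor $(\beta+1)^q$ into the second term yields exactly the claimed inequality. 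I do not expect any genuine obstacle here; the only point requiring attention is the telescoping bookkeeping. In Theorem~\ref{thm:4p1} the uncancelled boundary contribution came from the root $X$ with a positive sign (producing $(\beta+1)^{1-q}f^q$ on the right), whereas here it comes from the top elements $I_j$ of the family with the opposite sign, and this is precisely what produces the term $(\beta+1)\sum_j\mu(I_j)y_{I_j}^q$ in the claimed bound.
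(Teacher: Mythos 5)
Your proof is correct and follows the paper's own (one-sentence) argument: sum \eqref{eq:4p2} over those $I\in S_\phi$ lying inside the family, then telescope and invoke \eqref{eq:4p3} and \eqref{eq:4p7} exactly as in Theorem~\ref{thm:4p1}. One small imprecision worth flagging: you sum over $\{I\in S_\phi : I\subseteq\cup_j I_j\}$, and your justification that no $J=I_k$ survives in the inner sum only shows $I_k^\star\not\subseteq I_j$ for each \emph{single} $j$, which by itself does not rule out $I_k^\star\subseteq\cup_j I_j$. The identification you actually use, namely $\{I\in S_\phi : I\subseteq\cup_j I_j\}=\{I\in S_\phi : I\subseteq I_j\ \text{for some}\ j\}$, is true but requires Lemma~\ref{lem:2p3}~i): if $I\in S_\phi$ sat inside $\cup_j I_j$ without sitting inside any single $I_j$, then $A(\phi,I)\cap I_j=\emptyset$ for every $j$, forcing $\mu(A(\phi,I))=0$ and contradicting $I\in S_\phi$. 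With that observation inserted (or, as the paper does, by simply summing over $I\subseteq I_j$ for some $j$ from the start), your bookkeeping is exactly the intended proof.
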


\begin{proof}
We use the technique mentioned above in Theorem \ref{eq:4p1} by summing inequality \eqref{eq:4p2} with respect to all $I\in S_\phi$ with $I\subseteq I_j$ for any $j$. The rest details are easy to be verified.
\end{proof}

We have now the following generalization of Theorem \ref{thm:4p1}.
\begin{corollary} \label{cor:4p1}
Let $\phi$ be $\mc T$-good and $\mc A = \{I_j\}$ be a pairwise disjoint family of elements of $S_\phi$. Then for every $\beta>0$
\begin{multline} \label{eq:4p9}
\int_{X\setminus\cup I_j} (\mc M_{\mc T}\phi)^q\,\mr d\mu \leq
\frac{1}{(1-q)\beta} \left[ (\beta+1) \left( f^q - \sum\mu(I_j) y_{I_j}^q\right) - \right. \\
\left. (\beta+1)^q \int_{X\setminus\cup I_j} \phi^q\,\mr d\mu \right],
\end{multline}
where $y_{I_j} = Av_{I_j}(\phi)$, $f=\int_X \phi\,\mr d\mu$.
\end{corollary}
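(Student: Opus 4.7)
The plan is to reduce to the already proved Theorems \ref{thm:4p1} and \ref{thm:4p2} by extending the given (possibly non-maximal) disjoint family to a maximal one in $S_\phi$, and then splitting the integral.

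First, starting from the given pairwise disjoint family $\mc A = \{I_j\} \subseteq S_\phi$, I would build a maximal extension: augment $\mc A$ with elements of $S_\phi$ that are disjoint from all current members, iterating layer-by-layer through $\mc T_{(m)} \cap S_\phi$ (or equivalently invoking Zorn's lemma on the countable tree structure). This produces a collection $\{J_k\}$ of elements of $S_\phi$, each disjoint from every $I_j$ and from every other $J_k$, such that the combined family $\{I_j\} \cup \{J_k\}$ is pairwise disjoint and maximal in $S_\phi$ in the sense of Theorem \ref{thm:4p1} (i.e.\ every $I\in S_\phi$ meets the union).

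Next I would apply Theorem \ref{thm:4p1} to this maximal family; since the $J_k$'s are disjoint from all $I_j$'s, the complement $X\setminus(\cup_j I_j \cup \cup_k J_k)$ decomposes cleanly, and the theorem yields
\[
\int_{X\setminus(\cup I_j\cup \cup J_k)} (\mc M_{\mc T}\phi)^q\,\mr d\mu \leq \frac{1}{(1-q)\beta}\Bigl[(\beta+1)\Bigl(f^q - \sum\mu(I_j)y_{I_j}^q - \sum\mu(J_k)y_{J_k}^q\Bigr) - (\beta+1)^q\!\!\int_{X\setminus(\cup I_j\cup \cup J_k)}\!\!\varphi^q\,\mr d\mu\Bigr].
\]
Then I would apply Theorem \ref{thm:4p2} to the family $\{J_k\}$ alone, which gives
\[
\int_{\cup J_k} (\mc M_{\mc T}\phi)^q\,\mr d\mu \leq \frac{1}{(1-q)\beta}\Bigl[(\beta+1)\sum\mu(J_k)y_{J_k}^q - (\beta+1)^q\int_{\cup J_k}\varphi^q\,\mr d\mu\Bigr].
\]

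Finally I would add the two inequalities. Since $\{J_k\}$ is disjoint from $\{I_j\}$, we have the partition
\[
X\setminus\cup_j I_j = \bigl(X\setminus(\cup I_j\cup \cup J_k)\bigr) \sqcup \cup_k J_k,
\]
so the left-hand sides combine to $\int_{X\setminus\cup I_j}(\mc M_{\mc T}\phi)^q\,\mr d\mu$, and on the right-hand side the terms $(\beta+1)\sum\mu(J_k)y_{J_k}^q$ cancel while the $\varphi^q$ integrals join into $\int_{X\setminus\cup I_j}\varphi^q\,\mr d\mu$. The result is exactly \eqref{eq:4p9}. The only genuinely new ingredient compared to Theorems \ref{thm:4p1}--\ref{thm:4p2} is the existence of the maximal extension; I expect no obstacle there because $S_\phi$ is countable (tree structure) and pairwise disjointness is a closed-under-chain condition, so a standard greedy/Zorn argument works cleanly.
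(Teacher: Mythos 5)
Your proposal is correct and follows essentially the same route as the paper: extend $\mc A$ by a disjoint family $B=\{J_k\}\subseteq S_\phi$ so that $\mc A\cup B$ is maximal, apply Theorem \ref{thm:4p1} to $\mc A\cup B$ and Theorem \ref{thm:4p2} to $B$, and add the two inequalities so the $\sum\mu(J_k)y_{J_k}^q$ terms cancel. The algebra and the existence argument for the maximal extension both check out.
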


\begin{proof}
We choose a pairwise disjoint family $(J_i)_i = B \subseteq S_\phi$ such that the union $\mc A\cup B$ is maximal under the relation $\subseteq$ in $S_\phi$, and $I_j\cap J_i = \emptyset$ for all $i, j$.
Then if we apply Theorem \ref{thm:4p1} for $\mc A\cup B$ and Theorem \ref{thm:4p2} for $B$, and sum the two inequalities we derive the proof of our Corollary.
\end{proof}

\noindent We now proceed to the
\begin{proof}[Proof of Theorem \ref{thm:a}]
Suppose that we are given an extremal sequence $\phi_n: (X,\mu)\to \mb R^+$ of functions, such that $\int_X \phi_n\,\mr d\mu = f$, $\int_X \phi_n^q\,\mr d\mu = h$ for any $n\in \mb N$ and
\begin{equation} \label{eq:4p10}
\lim_n \int_X \left[ \max(\mc M_{\mc T}\phi_n, L)\right]^q\mr d\mu = h c.
\end{equation}
We prove that
\begin{equation} \label{eq:4p11}
\lim_n \int_X \left| \max(\mc M_{\mc T}\phi_n,L) - c^{\frac{1}{q}}\phi_n \right|^q\mr d\mu = 0.
\end{equation}
For the proof of \eqref{eq:4p11} we are going to give the chain of inequalities from which one gets Theorem \ref{thm:1}.
Then we use the fact that these inequalities become equalities in the limit. \\
Fix a $n\in\mb N$ and write $\phi=\phi_n$. For this $\phi$ we have the following
\begin{equation} \label{eq:4p12}
I_\phi := \int_X \left[ \max(\mc M_{\mc T}\phi,L) \right]^q\mr d\mu =
\int_{\{\mc M_{\mc T}\phi \geq L\}} (\mc M_{\mc T}\phi)^q\,\mr d\mu + L^q(1-\mu(E_\phi))
\end{equation}
where $E_\phi = \{\mc M_{\mc T}\phi \geq L\}$. \\
We write $E_\phi$ as $E_\phi = \cup I_j$, where $I_j$ are maximal elements of the $\mc T$, such that
\begin{equation} \label{eq:4p13}
\frac{1}{\mu(I_j)} \int_{I_j} \phi\,\mr d\mu \geq L.
\end{equation}
We set for any $j$, $\alpha_j = \int_{I_j} \phi^q\,\mr d\mu$ and $\beta_j = \mu(I_j)^{1-q} \left( \int_{I_j} \phi\,\mr d\mu\right)^q$. Additionally we set $A = \sum \alpha_j = \int_E \phi^q\,\mr d\mu \leq h$, where $E := E_\phi$, and $B = \sum_j \left( \mu(I_j)^{q-1} \beta_j\right)^\frac{1}{q} = \int_E \phi\,\mr d\mu \leq f$. \\
We also set $k=\mu(E)$. Note that the variables $A$, $B$, $k$ depend on the function $\phi$. \\
From \eqref{eq:4p12} we now obtain
\begin{equation} \label{eq:4p14}
I_\phi = L^q(1-k) + \sum_j \int_{I_j} (\mc M_{\mc T}\phi)^q\,\mr d\mu.
\end{equation}
Note now that from the maximality of any $I_j$ we have that $\mc M_{\mc T}\phi(x) =\\ \mc M_{\mc T(I_j)}\phi(x)$, for every $x\in I_j$ where $\mc T(I_j) = \{ J\in\mc T: J\subseteq I_j \}$.
We now apply Theorem \ref{thm:a} for the measure space $\left( I_j, \frac{\mu(\cdot)}{\mu(I_j)}\right)$ and for $L=\frac{1}{\mu(I_j)} \int_{I_j}\phi\,\mr d\mu = \Av_{I_j}(\phi)$, for any $j$, and we get that
\begin{equation}
I_\phi \leq L^q(1-k) + \sum_j \alpha_j\, \omega_q\! \left(\frac{\beta_j}{\alpha_j}\right).
\end{equation}
Note that $k^{1-q} B^q = \left(\sum_j \mu(I_j)\right)^{1-q} \left(\sum_j \left(\mu(I_j)^{q-1} \beta_j\right)^\frac{1}{q}\right)^q \geq \sum \beta_j \geq A$ in view of H\"{o}lder's inequality. \\
We now use the concavity of the function $\omega_q$, as can be seen in Lemma \ref{lem:3p1} \rnum{1}), and we conclude that
\begin{equation} \label{eq:4p16}
I_\phi \leq L^q(1-k) + A\,\omega_q\!\left(\frac{\sum \beta_j}{A}\right) \leq L^q(1-k) + A\,\omega_q\!\left(\frac{k^{1-q}B^q}{A}\right),
\end{equation}
where the last inequality comes from the fact that $\omega_q$ is increasing.
It is not difficult to see that the parameters $A, B$ and $k$ satisfy the following inequalities:
\begin{align*}
& A \leq k^{1-q} B^q,\quad A\leq h,\quad B\leq f,\quad 0\leq k\leq 1\quad \text{and} \\
& h-A \leq (1-k)^{1-q}(f-B)^q,
\end{align*}
the last one being  $\int_{X\setminus E} \phi^q\,\mr d\mu \leq \mu(X\setminus E)^{1-q} \left(\int_{X\setminus E} \phi\,\mr d\mu\right)^q$.
It is also easy to see that $B\geq kL$, by \eqref{eq:4p13} and the disjointness of $\{I_j\}_j$. From the above inequalities and \eqref{eq:4p16} we conclude that
\begin{equation} \label{eq:4p17}
I_\phi \leq L^q(1-k) + R_k(B),
\end{equation}
where $R_k$ is given by \eqref{eq:3p4}. Thus using Lemma \ref{lem:3p3} we have that
\begin{multline} \label{eq:4p18}
I_\phi \leq L^q(1-k) + R_k(B^\star) = \\
L^q(1-k) + h\,\omega_q\!\left(\frac{f^q}{h} H_q(\mc X_\lambda(k))\right) - (1-k)f^q\left(\mc X_\lambda(k)\right)^q,
\end{multline}
where $\lambda = \frac{f^q}{h}$, $\mc X_\lambda(k)$ is given in Lemma \ref{lem:3p2} and $B^\star = \mc X_\lambda(k) k f > k f$. According to Lemma \ref{lem:3p2} $\mc X_\lambda(k) $ satisfies $1< \mc X_\lambda(k)< \frac{1}{k}$ and
\[
H_q\!\left(\frac{\mc X_\lambda(k)(1-k)}{1-k\mc X_\lambda(k)}\right) = \lambda H_q(x).
\]
From \eqref{eq:4p18} we have that
\begin{equation} \label{eq:4p19}
I_\phi \leq \left[ L^q - f^q(\mc X_\lambda(k))^q\right](1-k) + h\,\omega_q\!\left(\frac{f^q}{h} H_q(\mc X_\lambda(k))\right).
\end{equation}
We now set $\mu = \frac{L}{f} > 1$. Then \eqref{eq:4p19} becomes
\begin{equation} \label{eq:4p20}
I_\phi \leq f^q \left\{ \left[ \mu^q - (\mc X_\lambda(k))^q\right](1-k) + \omega_q\!\left(\frac{f^q}{h} H_q(\mc X_\lambda(k))\right)\frac{1}{\sigma_q(k,\mc X_\lambda(k))}\right \}
\end{equation}
Remember that $\mc X_\lambda(k)$ satisfies $\sigma_q(k,\mc X_\lambda(k)) = \lambda = \frac{f^q}{h}$ by Lemma \ref{lem:3p2}. Now by the last equation we have that
\begin{multline} \label{eq:4p21}
\frac{f^q}{h} H_q(\mc X_\lambda(k)) =
H_q\!\left(\frac{\mc X_\lambda(k)(1-k)}{1-k\mc X_\lambda(k)}\right) \implies \\
\omega_q\!\left(\frac{f^q}{h}H_q(\mc X_\lambda(k))\right) =
\omega_q\!\left(H_q\!\left(\frac{\mc X_\lambda (k)(1-k)}{1-k\mc X_\lambda(k)}\right)\right).
\end{multline}
Remember that $\omega_q(z) = \left(H_q^{-1}(z)\right)^q$, for any $z\geq 1$. Thus $\omega_q\!\left(\frac{f^q}{h}H_q(\mc X_\lambda(k))\right) = \left(\frac{\mc X_\lambda(k)(1-k)}{1-k\mc X_\lambda(k)}\right)^q$. Thus from \eqref{eq:4p20} we have as a consequence that
\begin{align} \label{eq:4p22}
I_\phi &= f^q \left\{ \left[ \mu^q - \mc X_\lambda(k)^q\right](1-k) + \frac{1}{\sigma_q(k,\mc X_\lambda(k))} \left(\frac{(1-k)\mc X_\lambda(k)}{1-k\mc X_\lambda(k)}\right)^q\right\} \notag \\
 &= f^q R_{q,\mu}(k,\mc X_\lambda(k)).
\end{align}
According then to Lemma \ref{lem:3p2} \rnum{2}) we have that
\begin{multline} \label{eq:4p23}
I_\phi \leq f^q\left\{ \frac{1}{\lambda} \omega_q(\lambda H_q(\mu))\right\} =
h\,\omega_q\!\left(\frac{f^q}{h} H_q\!\left(\frac{L}{f}\right)\right) = \\
h\,\omega_q\!\left(\frac{(1-q)L^q + qL^{q-1}f}{h}\right) = h c = B_q^{\mc T}(f,h,L,1).
\end{multline}
Now if $\phi$ runs along $(\phi_n)$, we see by the extremality of this sequence that in the limit we have equality in \eqref{eq:4p23}.
That is we have equalities in the limit to all the previous steps which lead to \eqref{eq:4p23}. \\
If we let now $\phi=\phi_n$, we write $A=A_n,\ B=B_n$ and $k=k_n$. Since we have equality in the last inequality giving \eqref{eq:4p23} we conclude that $k\to k_0$, where $k_0$ satisfies:
\[
k_0(\lambda,\mu) = \frac{\omega_q(\lambda H_q(\mu))^{\frac{1}{q}} - \mu}{\mu\left(\omega_q(\lambda H_q(\mu))^\frac{1}{q} - 1\right)} \quad \text{and} \quad
\mc X_\lambda(k_0) = \mu = \frac{L}{f}.
\]
Additionally we must have that $B_n \to B^\star = k_0 f \mc X_\lambda(k_0) = k_0 f \frac{L}{f} = k_0 L$, which means exactly that $\lim \frac{1}{\mu(E_n)} \int_{E_n} \phi\,\mr d\mu = L$, where $E_n = \{\mc M_{\mc T}\phi_n \geq L\}$, with $\mu(E_n) = k_n \to k_0$.
This gives us equality in the weak type inequality for $(\phi_n)_n$, in case where $\lambda=L$. \\
We wish to prove that if we define $I_n^{(1)} := \int_X \big| \max(\mc M_{\mc T}\phi_n, L) - c^{\frac{1}{q}}\phi_n\big|^q\mr d\mu$, we then have that $\lim_n I_n^{(1)} = 0$.
Thus we write
\begin{align*}
I_n^{(1)} &= \int_{E_n} \left| \mc M_{\mc T}\phi_n - c^\frac{1}{q}\phi_n\right|^q\mr d\mu + \int_{X\setminus E_n} \left|L-c^\frac{1}{q}\phi_n\right|^q\mr d\mu \\
 &= J_n + \Lambda_n,
\end{align*}
where $J_n$ and $\Lambda_n$ have the obvious meaning.
Remember now in the above chain of inequalities leading to \eqref{eq:4p23}, we have already proved that
\[
\int_X [\max(\mc M_{\mc T}\phi_n,L)]^q\,\mr d\mu < L^q(1-k_n) + A_n \omega_q\!\left(\frac{k_n^{1-q}B_n^q}{A_n}\right)
\]
and  that we used the fact that $A_n\omega_q\!\left(\frac{k_n^{1-q}B_n^q}{A_n}\right) \leq R_k(B_n)$. Thus we must have in the inequality $A_n \geq h-(1-k)^{1-q}(f-B_n)^q=C_n$, equality in the limit according to the way that $R_k(B)$ is defined.
Thus we must have that $h-A_n \approx (1-k_n)^{1-q}(f-B_n)^q$, or equivalently
\begin{equation} \label{eq:4p24}
\bigg( \frac{1}{\mu(X\!\setminus\! E_n)} \int_{X\setminus E_n} \phi\,\mr d\mu\bigg)^q \approx \frac{1}{\mu(X\!\setminus\! E_n)} \int_{X\setminus E_n} \phi_n^q\,\mr d\mu.
\end{equation}
Additionally we must have that
\begin{equation} \label{eq:4p25}
\int_{E_n} (\mc M_{\mc T}\phi_n)^q\,\mr d\mu \approx A_n\,\omega_q\!\left(\frac{k_n^{1-q}B_n^q}{A_n}\right).
\end{equation}
We first prove that $\Lambda_n = \int_{X\setminus E_n} \big|L-c^\frac{1}{q}\phi_n\big|^q\,\mr d\mu \to 0$, as $n\to \infty$.
Since $\int_{E_n}\phi_n\,\mr d\mu = B_n \to B^\star = L k_0$, we must have that $\frac{1}{\mu(X\setminus E_n)} \int_{X\setminus E_n} \phi_n\,\mr d\mu = \frac{f-B_n}{1-k_n} \to \frac{f-B^\star}{1-k_0} = \frac{f-L k_0}{1-k_0}$. \\
By the properties that $k_0$ satisfies, we have that
\begin{equation} \label{eq:4p26}
k_0 = k_0(\lambda,\mu) = \frac{\omega_q(\lambda H_q(\mu))^\frac{1}{q} - \mu}{\mu\left(\omega_q(\lambda H_q(\mu))^\frac{1}{q} - 1\right)},
\end{equation}
where $\lambda = \frac{f^q}{\mu}$, $\mu = \frac{L}{f}$. Of course $\omega_q\!\left( \frac{f^q}{h} H_q\!\left(\frac{L}{f}\right)\right) = c$, thus \eqref{eq:4p26} gives
\[
k_0 = \frac{c^\frac{1}{q} - \frac{L}{f}}{\frac{L}{f}(c^\frac{1}{q}-1)} = \frac{f c^\frac{1}{q} - L}{L c^\frac{1}{q} - L} \implies \frac{f-k_0L}{1-k_0} = \frac{L}{c^\frac{1}{q}}.
\]
Thus \eqref{eq:4p24} becomes:
\begin{equation} \label{eq:4p27}
\left[ \frac{1}{\mu(X\!\setminus\! E_n)} \int_{X\setminus E_n} \phi_n^q\,\mr d\mu\right]^\frac{1}{q} \approx
\left( \frac{1}{\mu(X\!\setminus\! E_n)} \int_{X\setminus E_n} \phi_n\,\mr d\mu\right) \cong
\frac{L}{c^\frac{1}{q}} =: \tau
\end{equation}
In order to show that $\Lambda_n \to 0$, as $n\to \infty$ it is enough to prove that $\int_{X\setminus E_n} |\phi_n-\tau|\,\mr d\mu \to 0$, as $n\to\infty$, where $\tau$ is defined as above.
We use now the following elementary inequality
\begin{equation} \label{eq:4p28}
t + \frac{1-q}{q} \geq \frac{t^q}{q},
\end{equation}
which holds for every $q\in(0,1)$ and every $t>0$.
Additionally we have equality in \eqref{eq:4p28} only if $t=1$. We also assume that $\tau=1$ on \eqref{eq:4p27}. We can overcome this difficulty by dividing \eqref{eq:4p27} by $\tau$ and by considering $\frac{\phi_n}{\tau}$ instead of $\phi_n$.\\
By \eqref{eq:4p28} we have that
\begin{equation} \label{eq:4p29}
\frac{\phi_n^q(x)}{q} \leq \frac{1-q}{q} + \phi_n(x),\quad \text{for all}\ \ (X\!\setminus\! E_n)\cap \{\phi_n>1\}
\end{equation}
and that
\begin{equation} \label{eq:4p30}
\frac{\phi_n^q(y)}{q} \leq \frac{1-q}{q} + \phi_n(y),\quad \text{for all}\ \ y\in (X\!\setminus\! E_n) \cap \{\phi_n \leq 1\}.
\end{equation}
By integrating in the respective domains inequalities \eqref{eq:4p29} and \eqref{eq:4p30} we immediately get:
\begin{align} \label{eq:4p31}
\frac{1}{q} \int\limits_{(X\setminus E_n)\cap\{\phi_n > 1\}}\!\! \phi_n^q\,\mr d\mu \leq \frac{1-q}{q}\,\mu\!\left( (X\!\setminus\! E_n) \cap \{\phi_n>1\} \right) + \int\limits_{(X\setminus E_n)\cap\{\phi_n>1\}}\!\! \phi_n\,\mr d\mu, \\
\frac{1}{q} \int\limits_{(X\setminus E_n)\cap\{\phi_n\leq 1\}}\!\! \phi_n^q\,\mr d\mu \leq \frac{1-q}{q}\,\mu\!\left( (X\!\setminus\! E_n) \cap \{\phi_n \leq 1\} \right) + \int\limits_{(X\setminus E_n)\cap\{\phi_n \leq 1\}}\!\! \phi_n\,\mr d\mu. \label{eq:4p32}
\end{align}
Adding \eqref{eq:4p31} and \eqref{eq:4p32} we conclude that
\begin{equation} \label{eq:4p33}
\frac{1}{q}\frac{1}{\mu(X\!\setminus\! E_n)} \int_{X\setminus E_n} \phi_n^q\,\mr d\mu \leq \frac{1-q}{q} + \frac{1}{\mu(X\!\setminus\! E_n)} \int_{X\setminus E_n} \phi_n\,\mr d\mu.
\end{equation}
Since now \eqref{eq:4p27} holds, with $\tau=1$, we conclude that we have equality in \eqref{eq:4p33} in the limit.
Thus we must have equality in the limit in both of \eqref{eq:4p31} and \eqref{eq:4p32}. Thus we have that
\begin{align} \label{eq:4p34}
\frac{1}{\mu((X\!\setminus\! E_n)\cap\{\phi_n > 1\})} \int_{(X\setminus E_n)\cap\{\phi_n > 1\}} \phi_n\,\mr d\mu &\approx 1\ \ \text{and} \notag \\
\frac{1}{\mu((X\!\setminus\! E_n)\cap\{\phi_n \leq 1\}} \int_{(X\setminus E_n)\cap\{\phi_n \leq 1\}} \phi_n\,\mr d\mu &\approx 1.
\end{align}
Then from \eqref{eq:4p34} we have as a consequence that
\begin{multline*}
\int\limits_{(X\setminus E_n)\cap\{\phi_n > 1\}} (\phi_n-1)\,\mr d\mu =
\mu((X\!\setminus\! E_n)\cap\{\phi_n > 1\})\, \cdot \\
\Bigg\{\frac{1}{\mu((X\!\setminus\! E_n)\cap\{\phi_n > 1\})} \int\limits_{(X\setminus E_n)\cap\{\phi_n > 1\}}\!\! \phi_n\,\mr d\mu - 1\Bigg\}
\end{multline*}
tends to zero, as $n\to\infty$. \\
By the same way $\int_{(X\setminus E_n)\cap\{\phi_n \leq 1\}} (1-\phi_n)\,\mr d\mu \to 0$, so as a result we have $\int_{X\setminus E_n} |\phi_n-1|\,\mr d\mu \approx 0$. Since now  $\int_{X\setminus E_n} |\phi_n-1|^q\,\mr d\mu \leq \mu(X\setminus E_n)^{1-q} \big[\int_{X\setminus E_n} |\phi_n-1|\,\mr d\mu\big]^q$ and $\mu(E_n)\to k_0\in (0,1)$
we have that $\lim_n \int_{X\setminus E_n} |\phi_n-1|^q\,\mr d\mu = 0$. \\
By the above reasoning we conclude $\Lambda_n = \int_{X\setminus E_n} |L-c^\frac{1}{q}\phi_n|^q\,\mr d\mu \to 0$, as $n\to \infty$.
\end{proof}

\noindent We now prove the following
\begin{theorem} \label{thm:4p3}
Let $(\phi_n)_n$ be extended, where $0 < h \leq L^q$, $L \geq f$ are fixed. Consider for each $n\in\mb N$ a pairwise disjoint family $\mc A_n = (I_{j,n})_j$ such that the following limit exists:
\[
\lim_n \sum_{I\in \mc A_n} \mu(I) y_{I,n}^q,\ \ \text{where}\ \ y_{I,n} = \Av_I(\phi_n),\ I\in \mc A_n.
\]
Suppose also that $\cup\mc A_n = \cup_j I_{j,n} \subseteq \{\mc M_{\mc T}\phi_n \geq L\}$, for each $n=1, 2, \ldots$. Then $\lim_n \int_{\cup\mc A_n} (\mc M_{\mc T}\phi_n)^q\,\mr d\mu = c \lim_n \int_{\cup\mc A_n} \phi_n^q\,\mr d\mu$, where $c = \omega_q\!\left(\frac{(1-q)L^q + qL^{1-q}f}{h}\right)$.
\end{theorem}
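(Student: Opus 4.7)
Plan. The plan is to combine Theorem~\ref{thm:4p2} (to upper bound $P_n := \int_{\cup\mc A_n}(\mc M_{\mc T}\varphi_n)^q\,\mr d\mu$ in terms of $S_n := \sum_{I\in\mc A_n}\mu(I)y_{I,n}^q$ and $Q_n := \int_{\cup\mc A_n}\varphi_n^q\,\mr d\mu$), Corollary~\ref{cor:4p1} (to upper bound the integral on the complement), and the chain of inequalities used in the proof of Theorem~\ref{thm:a} (to tie everything back to the extremal limit $hc$). The extremality of $(\varphi_n)_n$ will then force every intermediate inequality to close up in the limit, from which the claimed identity drops out.

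First, after a short approximation arranging each $\varphi_n$ to be $\mc T$-good with each $I_{j,n}\in S_{\varphi_n}$, I would apply Theorem~\ref{thm:4p2} to $\mc A_n$ and minimize the bound over the free parameter $\beta>0$. A brief calculation (set $\partial_\beta$ of the right-hand side equal to zero and use $H_q(u)=u^{q-1}((1-q)u+q)$) shows the optimum lies at $\beta+1=H_q^{-1}(S_n/Q_n)$, giving
\[
P_n \leq Q_n\,\omega_q\!\left(\frac{S_n}{Q_n}\right).
\]
Analogously, applying Corollary~\ref{cor:4p1} to the same family and minimizing in $\beta$ yields
\[
\int_{X\setminus\cup\mc A_n}(\mc M_{\mc T}\varphi_n)^q\,\mr d\mu \leq (h-Q_n)\,\omega_q\!\left(\frac{f^q-S_n}{h-Q_n}\right).
\]

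Next, I would add these bounds to the contribution $L^q(1-k_n)$ coming from $X\setminus E_n$ (where $E_n=\{\mc M_{\mc T}\varphi_n\geq L\}$ and $\max(\mc M_{\mc T}\varphi_n,L)=L$) and retrace the chain of inequalities in the proof of Theorem~\ref{thm:a}. The result is
\[
\int_X[\max(\mc M_{\mc T}\varphi_n,L)]^q\,\mr d\mu \leq L^q(1-k_n)+R_{k_n}(B^\star)\leq hc,
\]
with the inner bound supplied by Lemma~\ref{lem:3p3} and the outer by Lemma~\ref{lem:3p2}. Extremality of $(\varphi_n)_n$ forces each of these intermediate inequalities to become equality in the limit; in particular the Theorem~\ref{thm:4p2} bound applied to $\mc A_n$ must be tight:
\[
P_n - Q_n\,\omega_q(S_n/Q_n) \longrightarrow 0.
\]

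Finally, I would invoke the strict concavity of $\omega_q$ (Lemma~\ref{lem:3p1}). The equality $\sum_j \alpha_{j,n}\omega_q(\beta_{j,n}/\alpha_{j,n}) - A_n\,\omega_q(\sum_j \beta_{j,n}/A_n)\to 0$, also forced by extremality on the maximal decomposition of $E_n$ into cubes $I$ with $\Av_I(\varphi_n)\geq L$, implies that the common asymptotic ratio $\beta/\alpha$ across those pieces is $\lambda H_q(\mu)$, where $\lambda=f^q/h$ and $\mu=L/f$. Since $\mc A_n\subseteq E_n$, the sub-sums $S_n$ and $Q_n$ inherit the same asymptotic ratio, yielding $S_n/Q_n\to\lambda H_q(\mu)=H_q(c^{1/q})$ and hence $\omega_q(S_n/Q_n)\to c$. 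Combined with $P_n-Q_n\omega_q(S_n/Q_n)\to 0$ and the convergence hypothesis on $S_n$ (which, together with the bound $Q_n\leq S_n$ and the ratio limit, forces $Q_n$ itself to converge), this gives $\lim_n P_n = c\,\lim_n Q_n$, completing the proof. The main obstacle is precisely the last step: propagating equality from the "global" extremality statement on $E_n$ to the sub-family $\mc A_n$. This requires a quantitative form of the strict concavity of $\omega_q$ to ensure that any slack inside $\mc A_n$ contributes genuine slack to the global sum, so that it cannot survive the asymptotic equality forced by extremality.
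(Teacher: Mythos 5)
Your opening move is the same as the paper's — apply Theorem~\ref{thm:4p2} to $\mc A_n$ and a complement bound — but you minimize the free parameter $\beta$ separately in each inequality before summing, and this is precisely what loses you the key device of the paper's proof. The paper keeps $\beta$ as a \emph{common} free parameter: it adds \eqref{eq:4p35} and \eqref{eq:4p36}, uses concavity of $t\mapsto t^q$ to pass to \eqref{eq:4p39}, and obtains a one-parameter family of upper bounds for $\int_{E_n}(\mc M_{\mc T}\varphi_n)^q$ valid for every $\beta>0$. Extremality then forces asymptotic equality in \eqref{eq:4p39} at the specific value $\beta_0 = c^{1/q}-1$; since \eqref{eq:4p39} was obtained by adding \eqref{eq:4p35} and \eqref{eq:4p36}, equality at $\beta_0$ propagates to each summand, giving asymptotic equality in \eqref{eq:4p35} at $\beta=\beta_0$. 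Because the right side of \eqref{eq:4p35} is minimized exactly at $\beta_n^*=\omega_q(S_n/Q_n)^{1/q}-1$, tightness at $\beta_0$ forces $\beta_n^*\to\beta_0$, hence $\omega_q(\lim S_n/Q_n)=c$. Plugging $\beta_0$ back into \eqref{eq:4p35} then gives $\lim P_n = c\lim Q_n$. No concavity-rigidity argument across pieces is needed at all.

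The gap in your proposal is exactly the step you flag as ``the main obstacle.'' You want to argue that strict concavity of $\omega_q$, together with tightness of Jensen on the maximal decomposition $E_n=\cup_j I_j^{(n)}$, forces a common asymptotic ratio $\beta_{j,n}/\alpha_{j,n}$, and that the subfamily $\mc A_n$ then ``inherits'' this ratio. But $\mc A_n$ is an \emph{arbitrary} pairwise disjoint subfamily of $S_{\varphi_n}$ contained in $E_n$; its elements may sit arbitrarily deep inside the tree, far below the maximal cubes $I_j^{(n)}$. Rigidity of Jensen at the level of the $I_j^{(n)}$ says nothing a priori about the averages over elements of $S_{\varphi_n}$ sitting properly inside one $I_j^{(n)}$ (their ratios $\mu(I)y_{I,n}^q / \int_I\varphi_n^q\,\mr d\mu$ can a priori be wildly different), so the inheritance does not follow, and you would need an entirely separate argument. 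This is not a small technical patch — it is the main content of the theorem, and the paper resolves it precisely by \emph{not} optimizing $\beta$ early. There is also a smaller bookkeeping issue: you apply Corollary~\ref{cor:4p1} to get a bound on $\int_{X\setminus\cup\mc A_n}(\mc M_{\mc T}\varphi_n)^q\,\mr d\mu$ and then add $L^q(1-k_n)$ ``for the contribution from $X\setminus E_n$,'' but $X\setminus\cup\mc A_n$ already contains $X\setminus E_n$, so the pieces you are adding overlap. The paper avoids this by applying Theorem~\ref{thm:4p1} \emph{inside each} maximal cube $I_j^{(n)}$ (rescaled to a probability space), obtaining the bound \eqref{eq:4p36} on $E_n\setminus\cup\mc A_n$ rather than on all of $X\setminus\cup\mc A_n$.
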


\begin{proof}
Define $\ell_n = \sum_{I\in\mc A_n} \mu(I) y_{I,n}^q$. By Theorem \ref{eq:4p2} we immediately see that for each $n\in\mb N^\star$
\begin{equation} \label{eq:4p35}
\int_{\cup\mc A_n} (\mc M_{\mc T}\phi_n)^q\,\mr d\mu \leq
\frac{1}{(1-q)\beta} \left[ (\beta+1) \sum_{I\in\mc A_n} \mu(I)y_{I,n}^q - (\beta+1)^q \int_{\cup\mc A_n} \phi_n^q\,\mr d\mu \right].
\end{equation}
Suppose that $E_n = \{\mc M_{\mc T}\phi_n \geq L\} = \cup I_j^{(n)}\ n=1, 2, \ldots$ where $I_j^{(n)}\in S_{\phi_n}$, for each $j$. \\
Now by our hypothesis we have that $\cup\mc A_n\subseteq E_n$, for all $n\in\mb N^\star$. Thus
\[
E_n\setminus \cup\mc A_n = \cup_j\left[ I_j^{(n)}\setminus \cup\mc A_n\right].
\]
Consider now for each $j$ and $n$ the probability since $\left( I_j^{(n)}, \frac{\mu(\cdot)}{\mu(I_j^{(n)})}\right)$, and apply there Theorem \ref{thm:4p1}, to get after summing on $j$ the following inequality
\begin{multline} \label{eq:4p36}
\int_{E_n\setminus \cup\mc A_n}(\mc M_{\mc T}\phi_n)^q\,\mr d\mu \leq \\
\frac{1}{(1-q)\beta} \bigg\{ (\beta+1) \bigg[ \sum_{I\in \{I_j^{(n)}\}_j} \mu(I) y_{I,n}^q - \sum_{I\in\mc A_n} \mu(I)y_{I,n}^q\bigg] - (\beta+1)^q\!\!\! \int\limits_{E_n\setminus \cup\mc A_n}\! \phi_n^q\,\mr d\mu \bigg\}.
\end{multline}
Summing \eqref{eq:4p35} and \eqref{eq:4p36} we have as a consequence that:
\begin{equation} \label{eq:4p37}
\int_{E_n} (\mc M_{\mc T}\phi_n)^q\,\mr d\mu \leq
\frac{1}{(1-q)\beta} \bigg[ (\beta+1) \sum_{I\in \{I_j^{(n)}\}_j} \mu(I)y_{I,n}^q - (\beta+1)^q\int_{E_n}\phi_n^q\,\mr d\mu \bigg].
\end{equation}
Using now the concavity of $t\mapsto t^q$, for $q\in(0,1)$ we obtain the inequality
\begin{equation} \label{eq:4p38}
\sum_{I\in\{I_j^{(n)}\}_j} \mu(I)y_{I,n}^q \leq
\frac{\left(\sum_{I\in\{I_j^{(n)}\}_j} \mu(I)y_{I,n}\right)^q}{\left(\sum_{I\in\{I_j^{(n)}\}_j} \mu(I)\right)^{q-1}} =
\frac{\left(\int_{E_n} \phi_n\,\mr d\mu\right)^q}{\mu(E_n)^{q-1}}.
\end{equation}
Thus \eqref{eq:4p37} in view of \eqref{eq:4p38} gives
\begin{multline} \label{eq:4p39}
\int_{E_n} (\mc M_{\mc T}\phi_n)^q\,\mr d\mu \leq
\frac{1}{(1-q)\beta} \left[ (\beta+1) \frac{1}{\mu(E_n)^{q-1}} \left( \int_{E_n} \phi_n\,\mr d\mu \right)^q - \right. \\
\left. (\beta+1)^q\int_{E_n} \phi_n^q\,\mr d\mu \right]
\end{multline}
By our hypothesis we have that
\begin{equation} \label{eq:4p40}
\int_{E_n} (\mc M_{\mc T}\phi_n)^q\,\mr d\mu \approx \\
\int_{E_n}\phi_n^q\,\mr d\mu\cdot \omega_1\!\left(\frac{k_n^{1-q}\left(\int_{E_n}\phi_n\,\mr d\mu\right)^q}{\int_{E_n}\phi_n^q\,\mr d\mu}\right),
\end{equation}
since $(\phi_n)$ is extremal, where $k_n=\mu(E_n)$, for all $n\in\mb N$. \\
But then by the definition of $\omega_q$; this means exactly that we have equality in the limit in \eqref{eq:4p39} for $\beta=\beta_n=\omega_q\!\left(\frac{k_n^{1-q}\left(\int_{E_n}\phi_n\,\mr d\mu\right)^q}{\int_{E_n}\phi_n^q\,\mr d\mu}\right)^\frac{1}{k}-1$ (see (3.18) and (3.19) in \cite{4}). \\
We set $c_{1,n} = \frac{k_n^{1-q}\left(\int_{E_n}\phi_n\,\mr d\mu\right)^q}{\int_{E_n}\phi_n^q\,\mr d\mu}$. We now prove that $c_{1,n}\to \frac{(1-q)L^q + qL^{q-1}f}{h}$, as $n\to \infty$.
Indeed, note that by the chain of inequalities leading to the least upper bound $B_q^{\mc T}(f,h,L,1) = c\,h$, we must have that
\begin{equation} \label{eq:4p41}
L^q(1-k_0) + \omega_q(c_{1,n})\int_{E_n}\phi_n^q\,\mr d\mu \approx c\,h,\quad \text{as}\ \ n\to\infty,
\end{equation}
where we suppose that $k_n\to k_0$ (we pass to a subsequence if necessary).
Now \eqref{eq:4p41} can be written as
\begin{equation} \label{eq:4p42}
L^q(1-k_n) + \omega_q(c_{1,n})\int_{E_n}\phi_n^q\,\mr d\mu \approx
\left(h - \int_{E_n}\phi_n^q\,\mr d\mu\right)c + \int_{E_n}\phi_n^q\,\mr d\mu\!\cdot\! c.
\end{equation}
But as we have already proved before, we have
\begin{align*}
\begin{aligned}
& L^q(1-k_n) \approx
 \left(h-\int_{E_n}\phi_n^q\,\mr d\mu\right)c \iff
 \frac{1}{1-k_n} \left(h - \int_{E_n}\phi_n^q\,\mr d\mu\right)= \frac{L^q}{c} \iff \\
& \frac{1}{\mu(X\!\setminus\! E_n)} \int_{X\setminus E_n}\phi_n^q\,\mr d\mu = \frac{L^q}{c}\ \text{which is indeed right, since}
 \end{aligned} &\\
 \int_{X\setminus E_n} \left|\phi_n - \frac{L}{c^\frac{1}{q}}\right|^q\mr d\mu \approx 0. &
\end{align*}
Thus \eqref{eq:4p41} gives $\omega_q(c_{1,n})\int_{E_n}\phi_n^q\,\mr d\mu \approx c\int_{E_n}\phi_n^q\,\mr d\mu$ and since it is easy to see that $\lim_n \int_{E_n}\phi_n^q\,\mr d\mu > 0$,
since $(\phi_n)$ is extremal, we have that $\lim \omega_q(c_{1,n}) = c = \omega_q\!\left(\frac{(1-q)L^q + qL^{q-1}f}{h}\right)$ or that $c_{1,n} \to \frac{(1-q)L^q + qL^{q-1}f}{h}$, as $n\to \infty$. \\
From \eqref{eq:4p40} we conclude now that $\int_{E_n}(\mc M_{\mc T}\phi_n)^q\,\mr d\,\mu \approx c\int_{E_n}\phi_n^q\,\mr d\mu$ and that, as we have said before we have equality in \eqref{eq:4p39} for the value of $\beta = c^\frac{1}{q}-1$, in the limit.
But \eqref{eq:4p39} comes from \eqref{eq:4p35} and \eqref{eq:4p36} by summing, so we must have equality in \eqref{eq:4p35} in the limit for this value of $\beta = c^\frac{1}{q}-1 = \omega_q\!\left(\frac{(1-q)L^q + qL^{q-1}f}{h}\right)^\frac{1}{q}-1$. \\
But the right side of \eqref{eq:4p35} is minimized for $\beta=\beta_n=\omega_q\!\left(\frac{\ell}{s}\right)^\frac{1}{q}-1$, where $\ell=\lim_n\sum_{I\in\mc A_n} \mu(I)y_{I,n}^q$, $s=\lim_n\int_{\cup\mc A_n}\phi_n^q\,\mr d\mu$. \\
Thus we must have that $\omega_q\!\left(\frac{\ell}{s}\right)=c$, and for the value of $\beta=c^\frac{1}{q}-1$, we get by the equality in \eqref{eq:4p35} that
\[
\lim_n\int_{\cup\mc A_n} (\mc M_{\mc T}\phi_n)^q\,\mr d\mu = c \lim_n \int_{\cup\mc A_n}\phi_n^q\,\mr d\mu.
\]
By this we end the proof of our theorem.
\end{proof}

\noindent We now proceed to prove that
\[
J_n = \int_{E_n} \left| \mc M_{\mc T}\phi_n - c^\frac{1}{q}\phi_n\right|^q\mr d\mu \to 0,\ \ \text{as}\ n \to \infty.
\]
For this proof we are going to use Theorem \ref{thm:4p3} for a sequence $\left(g_{\phi_n}\right)_n$ which arises from $(\phi_n)_n$ in a canonical way and is extremal by construction. We prove the following
\begin{lemma} \label{lem:4p1}
Let $\phi$ be $\mc T$-good and $L \geq f = \int_X\phi\,\mr d\mu$. There exists a measurable function $g_\phi: X\to \mb R^+$ such that for every $I\in \mc T$ such that $I\in S_\phi$ and $\Av_I(\phi)\geq L$ we have that $g_\phi$ assumes two values ($c_I^\phi$ or $0$) on $A(\phi,I)=A_I$.
Moreover $g_\phi$ satisfies $\Av_I(\phi) = \Av_I(g_\phi)$, for every $I\in \mc T$ that contains an element of $S_\phi$ (that is, it is not contained in any of the $A_J$'s).
\end{lemma}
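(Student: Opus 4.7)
The plan is to construct $g_\varphi$ piecewise on the partition $\{A_J\}_{J\in S_\varphi}$ of $X$ (modulo the null set $\mc A_\varphi$). On each $A_J$ with $J\in S_\varphi$ and $\Av_J(\varphi)<L$ we simply set $g_\varphi=\varphi$. On each $A_J$ with $J\in S_\varphi$ and $\Av_J(\varphi)\geq L$ we take $c_J^\varphi:=y_J=\Av_J(\varphi)$ and set $g_\varphi=c_J^\varphi\,\mc X_{F_J}$, where $F_J\subseteq A_J$ is a measurable subset to be chosen so that
\[
c_J^\varphi\cdot\mu(F_J\cap K)=\int_{K\cap A_J}\varphi\,\mr d\mu
\]
for every $K\in\mc T$ with $K\subseteq J$. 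The instance $K=J$ already fixes the total mass of $F_J$, while the instances with $K\subsetneq J$ are what will carry the average-preservation property down to tree elements inside $J$ that straddle $A_J$.

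The main technical point is the existence of such $F_J$. The crucial observation is that on $A_J$ we have $\varphi\leq\mc M_{\mc T}\varphi=c_J^\varphi$ almost everywhere (since $\Av_K(\varphi)\leq\mc M_{\mc T}\varphi(x)$ for every $K\in\mc T$ with $x\in K$, and $\mc M_{\mc T}\varphi=c_J^\varphi$ on $A_J$), so $(1/c_J^\varphi)\int_{K\cap A_J}\varphi\,\mr d\mu\leq\mu(K\cap A_J)$ and the prescribed value of $\mu(F_J\cap K)$ never exceeds what fits inside $K\cap A_J$. The prescriptions for different $K$'s are mutually compatible via the additivity $K\cap A_J=\bigsqcup_{K'\in C(K)}(K'\cap A_J)$ that follows from $K=\bigsqcup C(K)$. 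One then builds $F_J$ by a measure-theoretic refinement along the tree levels inside $J$: at each step the non-atomicity of $\mu$ (Lemma \ref{lem:2p1}) allows one to distribute $F_J\cap K$ among its tree-children according to the prescribed values, and condition iv) of Definition \ref{def:2p1} ensures the procedure yields a well-defined measurable set.

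For the average-preservation claim, let $I\in\mc T$ contain an element of $S_\varphi$ (equivalently, $I\not\subseteq A_J$ for any $J\in S_\varphi$). Using the partition $X\approx\bigcup_{J'\in S_\varphi}A_{J'}$ noted in Section \ref{sec:2} together with the tree-nesting property (two tree elements are either disjoint or nested), $I$ decomposes modulo a null set as
\[
I\;\approx\;\bigcup_{\substack{J'\in S_\varphi\\ J'\subseteq I}}A_{J'}\;\cup\;\bigcup_{\substack{J'\in S_\varphi,\;J'\supsetneq I\\ A_{J'}\cap I\neq\emptyset}}(I\cap A_{J'}).
\]
In the first union, $\int_{A_{J'}}g_\varphi\,\mr d\mu=\int_{A_{J'}}\varphi\,\mr d\mu$ by the $K=J'$ instance of the prescription (or trivially when $\Av_{J'}(\varphi)<L$). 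In the second, $I\subsetneq J'$ is itself a tree sub-element of $J'$, so the $K=I$ instance of the prescription for $F_{J'}$ yields $c_{J'}^\varphi\,\mu(F_{J'}\cap I)=\int_{I\cap A_{J'}}\varphi\,\mr d\mu$, i.e., $\int_{I\cap A_{J'}}g_\varphi\,\mr d\mu=\int_{I\cap A_{J'}}\varphi\,\mr d\mu$ (again trivial when $\Av_{J'}(\varphi)<L$). Summing gives $\int_I g_\varphi\,\mr d\mu=\int_I\varphi\,\mr d\mu$ and hence $\Av_I(g_\varphi)=\Av_I(\varphi)$. The main obstacle is the simultaneous prescription of the measures of $F_J\cap K$ across all relevant tree sub-elements $K$, resolved by the recursive construction in the second paragraph.
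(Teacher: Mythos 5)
There is a genuine gap: the set $F_J$ you require does not exist in general, so the construction fails before it starts. Asking that
\[
c_J^\varphi\,\mu(F_J\cap K)=\int_{K\cap A_J}\varphi\,\mr d\mu\qquad\text{for \emph{every}}\ K\in\mc T\ \text{with}\ K\subseteq J
\]
forces $\varphi$ to already be two-valued on $A_J$. To see this in the dyadic setting (the case of main interest, cf.\ Remark \ref{rem:4p1}): the family $\{K\in\mc T: K\subseteq J\}\cup\{\emptyset\}$ is a $\pi$-system generating the Borel $\sigma$-algebra of $J$, so the two finite measures $S\mapsto\mu(F_J\cap S)$ and $S\mapsto (c_J^\varphi)^{-1}\int_{S\cap A_J}\varphi\,\mr d\mu$, which have the same total mass, would have to agree on all Borel $S\subseteq J$; comparing densities gives $\mc X_{F_J}=(\varphi/c_J^\varphi)\mc X_{A_J}$ a.e., which is impossible unless $\varphi\in\{0,c_J^\varphi\}$ a.e.\ on $A_J$. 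Equivalently, along the chain $K_m\downarrow\{x\}$ of tree elements containing a fixed $x\in A_J$, the quantity $\mu(F_J\cap K_m)/\mu(K_m)$ tends a.e.\ to $\mc X_{F_J}(x)\in\{0,1\}$ by Lebesgue density (martingale convergence), while your prescription forces it to tend to $\varphi(x)/c_J^\varphi$. The compatibility bound $\nu\le\mu$ that you verify is necessary but far from sufficient, and the ``measure-theoretic refinement along the tree levels'' has no well-defined limit; that is exactly where the proposal breaks down.

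The paper's proof avoids this by imposing the matching only on those $K$ that actually enter the ``Moreover'' part — the $K$ containing an element of $S_\varphi$ — and observing that these cut each $A_{I_j}$ into honest blocks (the sets $B_{j,k}=I_{i_k,j}'\cap A_{I_j}$), inside which there is no further constraint and one is free to place a two-valued function with prescribed integrals by non-atomicity (Lemma \ref{lem:2p1}). You would also need to revisit the choice $c_J^\varphi=\Av_J(\varphi)$: the paper's construction additionally enforces $\int_{A_{I_j}}g_\varphi^q\,\mr d\mu=\int_{A_{I_j}}\varphi^q\,\mr d\mu$, which is not in the formal statement of the lemma but is used immediately afterwards to get $\int_X g_{\varphi_n}^q\,\mr d\mu=h$ and hence that $(g_{\varphi_n})_n$ is again extremal; that second constraint pins down $c_I^\varphi$ differently from $\Av_I(\varphi)$. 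Your analysis of the ``Moreover'' part — in particular the observation that at most one $J'\in S_\varphi$ with $J'\supsetneq I$ can satisfy $A_{J'}\cap I\ne\emptyset$ — is correct and would carry over once the block-level construction replaces the all-$K$ prescription.
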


\begin{proof}
Let $\phi$ be $\mc T$-good and $L\geq f$. \\
We first define $g_\phi(t)=\phi(t)$, $t\in X\!\setminus\! E_\phi$, where we set $E_\phi = \{\mc M_{\mc T}\phi \geq L\}$.
Then we write $E_\phi = \cup I_j$, where $I_j$ are maximal elements of the tree $\mc T$ such that $\Av_{I_j}(\phi)\geq L$. Then by Lemma \ref{lem:2p2} we have that $I_j\in S_\phi$. Note now that
\begin{equation} \label{eq:4p43}
I_j = A(\phi,I_j) \cup \bigg( \cup_{\substack{J=I_j\\ J\in S_\phi}} J\bigg),\ \ \text{for any}\ j.
\end{equation}
Fix a $j$. We define first the following function $g_{\phi,j}^{(1)}(t) = \phi(t)$, $t\in I_j\!\setminus\! A(\phi,I_j)$. We write $A_{I_j} = A(\phi,I_j) = \cup_i I_{i,j}$, where $I_{i,j}$ are maximal elements of $\mc T$, subject to the relation $I_{i,j}\subseteq A_{I_j}$ for any $i$. For each $i$ we have $I_{i,j}\in \mc T_{\left(k_{i,j}\right)}$ for some $k_{i,j} \geq 1$, $k_{i,j} \in \mb N$. Let $I_{i,j}'$ be the unique element of $\mc T$, such that $I_{i,j}' \in \mc T_{(k_{i,j}-1)}$ and $I_{i,j}' \supsetneq I_{i,j}$. \\
We set $\Omega_j = \cup_i I_{i,j}'$, for our $j$. Note now that for every $i$, $I_{i,j}' \notin S_\phi$. This is true because of \eqref{eq:4p43} and the structure that the tree $\mc T$ has by its definition.
Consider now a maximal subfamily of $(I_{i,j}')$ that still covers $\Omega_j$. Then we can write $\Omega_j = \cup_{k=1}^\infty I_{i_k,j}'$, for some sequence of integers $i_1\!<\!i_2\!<\! \ldots \!<\!i_k\!<\!\ldots$, possibly finite, where the family $\left(I_{i_k,j}'\right)_k$, $k=1, 2, \ldots$.
Additionally we obviously have that $\Omega_j \subseteq I_j$. By the maximality of any $I_{i_k,j}$, $k\in \mb N$, we have that $I_{i_k,j}' \cap \left(I_j\setminus A_{I_j}\right) \neq \emptyset$, so  there exists $J_j\in S_\phi$ such that $J_j^\star = I_j$ with $J_i \cap I_{i_k,j}' \neq \emptyset$.
Since now each $I_{i_k,j}'$ is not contained in any of $J_i$ (since it contains elements of $A_{I_j}$) we must have that it actually contains any such $J_i$. That is we can write, for any $k\in \mb N$
\[
I_{i_k,j}' = \left[ \cup J_{k,j,m}\right] \cup \left[B_{j,k}\right],
\]
where for any $n\in \mb N$
\[
J_{k,j,m}\in S_\phi,\ \ J_{k,j,m}^\star = I_j\ \ \text{and}\ \ B_{j,k} = I_{i_k,j}'\cap A_{I_j}.
\]
Of course we have $\cup_k B_{j,k} = A_{I_j}$. \\
We define the following function on $I_j$. We name it as $g_{\phi,j,1}: I_j \to \mb R^+$. We set $g_{\phi,j,1}(t) = \phi(t)$, $t\in I_j\setminus A_{I_j}$. Now we are going to construct $g_{\phi,j,1}$ on $A_{I_j}$ in such way that for every $I\in \mc T$ such that $I$ contains an element $J\in S_\phi$, such that $J^\star=I$, we have that $\Av_I(g_{\phi,j,1}) = \Av_I(\phi)$.
We proceed to this as follows: For any $k$, $B_{j,k}$ is a union of elements of the tree $\mc T$. Using Lemma \ref{lem:2p1}, we construct for any $\alpha\in (0,1)$ (that will be fixed later) a pairwise disjoint family of elements of $\mc T$ and subsets of $B_{j,k}$ named as $A_{\phi,j,k}$, such that $\sum_{J\in A_{\phi,j,k}} \mu(J) = \alpha\mu(B_{j,k})$,
We define now the function $g_{\phi,j,1,k}: B_{j,k}\to \mb R^+$ by the following way:
\[
g_{\phi,j,k,1}(t) := \left\{ \begin{aligned}
& c_{j,k,1}^\phi, &t&\in \cup\left\{J: J\in\mc A_{\phi,j,k}\right\} \\
& 0, \quad &t&\in B_{j,k}\setminus \cup\left\{J: J\in\mc A_{\phi,j,k}\right\}
\end{aligned} \right.
\]
such that
\begin{equation} \label{eq:4p44}
\left. \begin{aligned}
\int_{B_{j,k}} g_{\phi,j,k,1}\,\mr d\mu = c_{j,k,1}^\phi \gamma_{j,k,1}^\phi = \int_{B_{j,k}}\phi\,\mr d\mu&,\ \text{and} \\
\int_{B_{j,k}} g_{\phi,j,k,1}^q\,\mr d\mu = \left(c_{j,k,1}^\phi\right)^q \gamma_{j,k,1}^\phi = \int_{B_{j,k}} \phi^q\,\mr d\mu&
\end{aligned}\ \right\},
\end{equation}
where $\gamma_{j,k,1}^\phi = \mu\!\left(\cup_{J\in A_{\phi,j,k}} J\right) = \alpha \mu(B_{j,k})$.
It is easy to see that such choices for $c_{j,k,1}^\phi\geq 0$, $\gamma_{j,k,1}^\phi\in [0,1]$ always exist. In fact we just need to set
\[
\gamma_{j,k,1}^\phi = \left[ \frac{\left(\int_{B_{j,k}} \phi\,\mr d\mu\right)^p}{\int_{B_{j,k}}\phi^p\,\mr d\mu} \right]^\frac{1}{(p-1)} \leq \mu(B_{j,k}),
\]
by H\"{o}lder's inequality, and also $\alpha = \gamma_{j,k,1}^\phi / \mu(B_{j,k})$, $c_{j,k,1}^\phi = \int_{B_{j,k}} \phi\,\mr d\mu / \gamma_{j,k,1}^\phi$. \\
We let then $g_{\phi,j,1}(t) := g_{\phi,j,k,1}(t)$, if $t\in B_{j,k}$. Note that $g_{\phi,j,1}$ may assume more that one positive values on $A_{I_j} = \cup_k B_{j,k}$. It is easy then to see that there exists a common positive value, denoted by $c_{I_j}^{\phi}$ and measurable sets
$L_k\subseteq B_{j,k}$, such that if we define $g'_{\phi,j,1}(t) :=c_{I_j}^{\phi}\chi_{L_k}(t)$ for $t\in B_{j,k}$ for any $k$ and $g'_{\phi,j,1}(t) :=\phi(t)$ for $t\in X\setminus B_{j,k}$, where $\chi_S$ denotes the characteristic function of $S$, we still have
$\int_{B_{j,k}}\phi\,\mr d\mu = \int_{B_{j,k}}g'_{\phi,j,k,1}\,\mr d\mu=c_{I_j}^{\phi}\mu(L_k)$ and
$\int_{A_{I_j}}\phi^q\,\mr d\mu = \int_{A_{I_j}}(g'_{\phi,j,k,1})^q\,\mr d\mu$. For the construction of $L_k$ and $c_{I_j}^{\phi}$
we just need to find first the subsets $L_k$ of $B_{j,k}$, for which the first inequalities mentioned above are true, and this can be done for arbitrary $c_{I_j}^{\phi}$, since $(X,\mu)$ is nonatomic. Then we just need to find the value of the constant $c_{I_j}^{\phi}$ for which the
second integral equality is also true. Note also that for these choices for $L_k$ and $c_{I_j}^{\phi}$ we may not have
$\int_{B_{j,k}}\phi^q\,\mr d\mu = \int_{B_{j,k}}g'_{\phi,j,k,1})^q\,\mr d\mu$, but the respective equality with $A_{I_j}$ in place
of $B_{j,k}$ should be true.
We have thus defined $g_{\phi,j,1}'$. It is obvious now that if $I\in\mc T: I\cap A_{I_j} \neq \emptyset$ and $I\subsetneq A_{I_j}$ (that is $I\cap J\neq \emptyset$ for some $J\in S_\phi$ with $J^\star=I_j$), we must have that $I$ is a union of some of the $I_{i_k,j}'$ and some of the $J$'s for which $J\in S_\phi$ and $J^\star=I$.
Then obviously we should have by the construction we just made that $\int_I g_{\phi,j,1}'\,\mr d\mu = \int_I\phi\,\mr d\mu$. We inductively continue and define $g_{\phi,j,2}':= g_{\phi,j,1}'$ on $A_{I_j}$, and by working also in any of $J\in S_\phi$ such that $J^\star=I_j$, we define it in all the $A_J$'s by the same way as before.
We continue defining with $g_{\phi,j,\ell}'$, $\ell=3, 4, \ldots$. We set at last $g_\phi(t):= \lim_\ell g_{\phi,j,\ell}'(t)$ for any $t\in I_j$. Note that is in fact the sequence $(g_{\phi,j,\ell}'(t))_\ell$ is constant for every $t\in I_j$. Then by it's definition, $g_\phi$ should satisfy the conclusions of our lemma. In this way we derive it's proof.
\end{proof}

\begin{cremark}
It is not difficult to see that for every $I\in S_\phi$, $I\subseteq I_j$ for some $j$ the function $g_\phi$ that is constructed in the previous lemma satisfies $\mu(\{g_\phi=0\} \cap A_I) \geq \mu(\{\phi=0\} \cap A_I)$. This can be seen if we repeat the previous proof by working on the set $\{\phi>0\}\cap A_I$ for any such $I$.
As a consequence, since $E_\phi = \cup_j I_j = \cup_j \bigg(\cup_{\substack{I\subseteq I_j\\ I\in S_\phi}} A(\phi,I)\bigg)$, we conclude that $\mu(\{g_\phi=0\}\cap E_\phi) \geq \mu(\{\phi=0\}\cap E_\phi)$. \\
\end{cremark}

\noindent We prove now the following
\begin{lemma} \label{lem:4p2}
For an extremal sequence $(\phi_n)_n$ of $\mc T$-good functions we have that $\lim_n \mu( \{\phi_n=0\} \cap \{ \mc M_{\mc T}\phi_n \geq L\} ) = 0$.
\end{lemma}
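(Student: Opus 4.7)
The plan is to argue by contradiction. Suppose along a subsequence $\mu(F_n) \geq \delta > 0$, where $F_n := \{\varphi_n = 0\} \cap E_n$ and $E_n := \{\mc M_{\mc T}\varphi_n \geq L\}$. First I would decompose $E_n = \bigcup_j I_j^{(n)}$ into the maximal tree elements satisfying $\Av_{I_j}(\varphi_n) \geq L$, and write $L_j = \Av_{I_j}(\varphi_n) \geq L$, $\alpha_j = \int_{I_j}\varphi_n^q\,\mr d\mu$, $\beta_j = \mu(I_j) L_j^q$, $\gamma_j = \mu(F_n \cap I_j^{(n)})$, so that $\sum_j \gamma_j = \mu(F_n)$ and $t_j := \gamma_j/\mu(I_j) \in [0,1]$.

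Since $\varphi_n$ vanishes on $F_n \cap I_j$, H\"older on $I_j \setminus F_n$ yields the strict improvement
\[
\alpha_j = \int_{I_j \setminus F_n}\varphi_n^q\,\mr d\mu \leq (\mu(I_j)-\gamma_j)^{1-q}\left(\int_{I_j}\varphi_n\,\mr d\mu\right)^q = (1-t_j)^{1-q}\beta_j,
\]
hence $\beta_j/\alpha_j \geq (1-t_j)^{-(1-q)}$. Next I would apply Theorem \ref{thm:4p3} to the full family $\mc A_n = \{I_j^{(n)}\}_j$ (passing to a subsequence so that $\lim_n \sum_j \mu(I_j)L_j^q$ exists), obtaining $\int_{E_n}(\mc M_{\mc T}\varphi_n)^q\,\mr d\mu \to c A^*$ with $A^* = \lim_n A_n$. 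The tight chain producing this bound --- Theorem \ref{thm:1} applied on each $I_j$, the concavity inequality $\sum_j(\alpha_j/A_n)\omega_q(\beta_j/\alpha_j) \leq \omega_q(\sum_j\beta_j/A_n)$ from Lemma \ref{lem:3p1}(i), and H\"older $\sum_j\beta_j \leq k_n^{1-q}B_n^q$ --- must become equality in the limit. The concavity step forces the weighted distribution of $\beta_j/\alpha_j$ to concentrate about the common value $\rho := ((1-q)L^q + qL^{q-1}f)/h$, while the Jensen step \eqref{eq:4p3} together with the relation $y_J = c^{1/q}x_I$ coming from equality in \eqref{eq:4p2} pins down $\varphi_n$ to approximate the strictly positive constant $x_I \geq L/c^{1/q}$ on each cell $A(\varphi_n, I) \subseteq E_n$.

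Since the cell values $x_I$ are uniformly bounded below by $L/c^{1/q} > 0$, the zeros of $\varphi_n$ occupy only a vanishing fraction of each cell in the limit, giving $\sum_j \gamma_j = \mu(F_n) \to 0$ and yielding the contradiction. The main obstacle is that the naive H\"older bound $\beta_j/\alpha_j \geq (1-t_j)^{-(1-q)}$ combined only with the weighted convergence $\beta_j/\alpha_j \to \rho$ merely yields $t_j \leq 1 - \rho^{-1/(1-q)}$, a fixed positive constant. The argument must therefore exploit simultaneously the two equality conditions \eqref{eq:4p2} and \eqref{eq:4p3} of Theorem \ref{thm:4p1} as applied inside Theorem \ref{thm:4p3}'s proof: the former identifies the cell averages $x_I$ and the latter forces $\varphi_n$ to be close to those constants, using also the strict monotonicity of $U_q$ from Lemma \ref{lem:3p1}(ii) to upgrade "approximately constant" on each cell to the required conclusion $t_j \to 0$ in measure.
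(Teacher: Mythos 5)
Your proposal correctly identifies the structural shape of the problem (extremality forces approximate equality in the chain leading to \eqref{eq:4p23}, and one must extract cell-level information from that) and even correctly diagnoses the central obstacle: the H\"older improvement $\beta_j/\alpha_j \geq (1-t_j)^{-(1-q)}$, combined with the convergence of the weighted mean of $\beta_j/\alpha_j$, only yields a fixed upper bound on $t_j$, not $t_j\to 0$. But the proposed resolution is a genuine gap, not a proof. You assert that approximate equality in the summed Jensen step \eqref{eq:4p3} ``forces $\varphi_n$ to be close to those constants'' on each cell $A(\varphi_n,I)$ and that the strict monotonicity of $U_q$ ``upgrades'' this to $t_j\to 0$. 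Neither claim is worked out, and neither follows automatically: approximate equality in an infinite sum of Jensen-type inequalities does not deliver a uniform cell-by-cell statement --- the error can concentrate on a sparse subfamily of cells, and exactly there $\varphi_n$ could have its zero set. The crux of the lemma is to rule out this concentration, and your sketch stops precisely where that work would begin.

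The paper does not attack this directly. It introduces the auxiliary sequence $g_{\varphi_n}$ from Lemma~\ref{lem:4p1}, whose zero set on $E_{\varphi_n}$ dominates that of $\varphi_n$ (Remark), so it suffices to prove the conclusion for $g_{\varphi_n}$. Because $g_{\varphi_n}$ is exactly two-valued on each $A_{I,n}$, the quantities $\gamma_I^{\varphi_n}, c_I^{\varphi_n}$ give a clean algebraic handle. Lemma~\ref{lem:4p3} (an equality-in-the-limit refinement of the Jensen step, summed over cells) plus the ``small-cell'' sets $S_{\varphi_n,R}$ and Theorem~\ref{thm:4p3} are then used to show that the contribution of cells where $\varphi_n^q$-mass is subcritical vanishes, and the elementary inequality $x^{q-1}y^{2-q}-y \geq (1-q)(y-x)$ converts the resulting limit statement \eqref{eq:4p54} into $\sum(\alpha_{I,n}-\gamma_I^{\varphi_n})\to 0$. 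None of this machinery appears in your sketch, and without some substitute for it the argument does not close. If you want to complete this route you will need, at minimum, a quantitative way to pass from the aggregate equality conditions to a statement about the zero set, and the $g_{\varphi_n}$/Lemma~\ref{lem:4p3}/$S_{\varphi_n,R}$ mechanism is exactly the tool the paper builds for that purpose.
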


\noindent Before we proceed to the proof of the above Lemma, we prove the following
\begin{lemma} \label{lem:4p3}
For an extremal sequence $(\phi_n)$, consisting of $\mc T$-good functions, such that $S_{\phi_n}$ is the respective subtree of any $\phi_n$, the following holds
\[
\lim_n \sum_{\substack{I\in S_{\phi_n}\\ I\subseteq E_{\phi_n}}} \frac{\left(\int_{A(\phi_n,I)} \phi_n\,\mr d\mu\right)^q}{\alpha_{I,n}^{q-1}} =
\lim_n \int_{E_{\phi_n}} \phi_n^q\,\mr d\mu,
\]
where $\alpha_{I,n} = \mu(A(\phi,I))$, for $I\in S_{\phi_n}$, $n=1, 2, \ldots$.
\end{lemma}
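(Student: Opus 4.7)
The idea of the proof is that the Hölder step \eqref{eq:4p3}, applied to each $A(\phi_n,I)$ in the chain leading to Theorem \ref{thm:1}, is the only source of the gap between $\sum_I\alpha_{I,n}x_{I,n}^q$ and $\int_{E_{\phi_n}}\phi_n^q\,\mr d\mu$, and extremality will force this step to be tight in the limit. Throughout, write $x_{I,n} := \Av_{A(\phi_n,I)}(\phi_n)$, so that the sum in the statement equals $\sum\alpha_{I,n}x_{I,n}^q$.

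\emph{Lower bound.} Jensen's inequality for the concave map $t\mapsto t^q$ on the probability space $(A(\phi_n,I),\mu/\alpha_{I,n})$ gives $\alpha_{I,n}x_{I,n}^q\geq\int_{A(\phi_n,I)}\phi_n^q\,\mr d\mu$ for each $I\in S_{\phi_n}$. Summing over $I\in S_{\phi_n}$, $I\subseteq E_{\phi_n}$, and invoking Lemma \ref{lem:2p3}(\rnum{3}) on each maximal piece $I_j^{(n)}$ of $E_{\phi_n}$ (so that $\sum_I\int_{A(\phi_n,I)}\phi_n^q\,\mr d\mu=\int_{E_{\phi_n}}\phi_n^q\,\mr d\mu$) yields the $\geq$ direction, which passes to the limit.

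\emph{Upper bound.} For the $\leq$ direction, the plan is to derive a strengthened form of Theorem \ref{thm:4p2} in which $\int\phi_n^q\,\mr d\mu$ is replaced by the (a priori larger) quantity $\sum\alpha_{I,n}x_{I,n}^q$. Fix $\beta>0$. Summing \eqref{eq:4p2} directly over $I\in S_{\phi_n}$ with $I\subseteq I_j^{(n)}$, using the mean-value estimate \eqref{eq:4p7} on the left hand side and telescoping, but \emph{without} invoking \eqref{eq:4p3}, gives
\[
(\beta+1)^q\!\!\sum_{\substack{I\in S_{\phi_n}\\ I\subseteq I_j^{(n)}}}\!\!\alpha_{I,n}x_{I,n}^q\;\leq\;(\beta+1)\mu(I_j^{(n)})y_{I_j^{(n)},n}^q\;-\;(1-q)\beta\!\int_{I_j^{(n)}}\!\!(\mc M_{\mc T}\phi_n)^q\,\mr d\mu.
\]
Summing over $j$ and writing $\ell_n:=\sum_j\mu(I_j^{(n)})y_{I_j^{(n)},n}^q$ then yields
\[
(\beta+1)^q\!\!\!\!\sum_{\substack{I\in S_{\phi_n}\\ I\subseteq E_{\phi_n}}}\!\!\!\!\alpha_{I,n}x_{I,n}^q\;\leq\;(\beta+1)\ell_n\;-\;(1-q)\beta\!\int_{E_{\phi_n}}\!(\mc M_{\mc T}\phi_n)^q\,\mr d\mu.\qquad(\star)
\]

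\emph{Closing the gap.} I then apply Theorem \ref{thm:4p3} with $\mc A_n=\{I_j^{(n)}\}_j$: the hypotheses hold since $\cup\mc A_n=E_{\phi_n}\subseteq\{\mc M_{\mc T}\phi_n\geq L\}$, and the limit $\ell_n\to k_0L^q$ follows from the chain \eqref{eq:4p16}--\eqref{eq:4p23} in the proof of Theorem \ref{thm:a} (where $\ell_n=\sum_j\beta_{j,n}\to k_0^{1-q}(B^\star)^q=k_0L^q$). The proof of Theorem \ref{thm:4p3} established in addition that \eqref{eq:4p35} holds with equality in the limit precisely for $\beta=c^{1/q}-1$, which rearranges to
\[
(\beta+1)\ell_n\;-\;(1-q)\beta\!\int_{E_{\phi_n}}\!(\mc M_{\mc T}\phi_n)^q\,\mr d\mu\;\approx\;(\beta+1)^q\!\int_{E_{\phi_n}}\!\phi_n^q\,\mr d\mu.
\]
Substituting this into $(\star)$ with the same $\beta=c^{1/q}-1$ yields $\sum_{I\subseteq E_{\phi_n}}\alpha_{I,n}x_{I,n}^q\leq\int_{E_{\phi_n}}\phi_n^q\,\mr d\mu+o(1)$, which together with the lower bound proves the lemma. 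The main obstacle is the derivation of $(\star)$: one must carefully redo the telescoping of Theorem \ref{thm:4p1} on the subtree rooted at each $I_j^{(n)}$ while retaining the $\sum\alpha_{I,n}x_{I,n}^q$ term in place of $\int_{I_j^{(n)}}\phi_n^q\,\mr d\mu$, so that no information is lost in the Hölder step.
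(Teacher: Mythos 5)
Your proof is correct and follows essentially the same route as the paper: both observe that, by extremality, the summed H\"{o}lder step $\sum_{I\subseteq E_{\phi_n}}\alpha_{I,n}x_{I,n}^q \geq \int_{E_{\phi_n}}\phi_n^q\,\mr d\mu$ (coming from \eqref{eq:4p3}) must become an equality in the limit. Your argument is simply more explicit: you isolate $(\star)$ as a H\"{o}lder-free variant of Theorem \ref{thm:4p2} and combine it with the asymptotic equality in \eqref{eq:4p35} for $\mc A_n=\{I_j^{(n)}\}_j$ and $\beta=c^{1/q}-1$ (established in the proof of Theorem \ref{thm:4p3}), whereas the paper merely points to the chain of inequalities leading to \eqref{eq:4p46} and to the proof of the first Bellman step in \cite{4}.
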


\begin{proof}
Remember that the following inequalities have been used in the evaluation of the function $B_\phi^{\mc T}(f,h,L,1)$:
\begin{equation} \label{eq:4p45}
\int_{I_j} (\mc M_{\mc T}\phi_n)^q\,\mr d\mu \leq \alpha_j \omega_q\! \left(\frac{\beta_j}{\alpha_j}\right).
\end{equation}
Thus we must have equality in the limit in the following inequality:
\begin{equation} \label{eq:4p46}
\sum_j \int_{I_j} (\mc M_{\mc T}\phi_n)^q\,\mr d\mu \leq \sum_j \alpha_j \omega_q \left(\frac{\beta_j}{\alpha_j}\right).
\end{equation}
But in the proof of (4.45) the following inequality was used in order to pass from (3.16) to (3.17) in \cite{4}:
\[
\sum_{\substack{I\in S_{\phi}}} \alpha_I x_I^q \geq \int_X \phi^q\,\mr d\mu.
\]
Now in place of $X$ in the last integral we have the $I_j$'s, so from equality in \eqref{eq:4p46} in the limit, we immediately obtain the statement of our Lemma \ref{lem:4p3}. Our proof is complete.
\end{proof}

\noindent We now return to the
\begin{proof}[Proof of Lemma \ref{lem:4p2}]
It is enough due to the comments mentioned above that $\lim_n \mu(\{g_{\phi_n}=0\} \cap E_{\phi_n}) = 0$. For this, we just need to prove that $$\lim_n \sum_{\substack{I\in S_{\phi_n}\\ I\subseteq E_{\phi_n}}} (\alpha_{I,n}-\gamma_I^{\phi_n}) = 0,$$ where $\alpha_{I,n} = \mu(A(\phi_n,I))$,
and $\gamma_I^{\phi_n}=\mu(A(\phi_n,I)\cap \{g_{\phi_n}>0\})$ for $I\in S_{\phi_n}$, $I\subseteq E_{\phi_n}$. \\
For those $I$ we set
\[
P_{I,n} = \frac{\int_{A_{I,n}} \phi_n^q\,\mr d\mu}{\alpha_{I,n}^{q-1}},\ \text{where}\ A_{I,n} = A(\phi_n,I).
\]
Then we obviously have that $\sum_{\substack{I\in S_{\phi_n}\\ I\subseteq E_{\phi_n}}} \alpha_{I,n}^{q-1} P_{I,n} = \int_{E_{\phi_n}} \phi_n^q$. \\
Additionally $\sum_{\substack{I\in S_{\phi_n}\\ I\subseteq E_{\phi_n}}} (\gamma_I^{\phi_n})^{q-1} P_{I,n} \geq h$, since $0<q<1$ and $\gamma_I^{\phi_n} \leq \alpha_{I,n}$ for $I\in S_{\phi_n}$, $I\subseteq E_{\phi_n}$. However
\begin{multline} \label{eq:4p47}
\sum_{\substack{I\in S_{\phi_n}\\ I\subseteq E_{\phi_n}}} (\gamma_I^{\phi_n})^{q-1} P_{I,n} =
\sum_{\substack{I\in S_{\phi_n}\\ I\subseteq E_{\phi_n}}} (\gamma_I^{\phi_n})^{q-1} \frac{(c_I^{\phi_n})^q \gamma_I^{\phi_n}}{(\alpha_{I,n})^{q-1}} = \\
\sum_{\substack{I\in S_{\phi_n}\\ I\subseteq E_{\phi_n}}} \frac{(\gamma_I^{\phi_n} . c_I^{\phi_n})^q}{(\alpha_{I,n})^{q-1}} =
\sum_{\substack{I\in S_{\phi_n}\\ I\subseteq E_{\phi_n}}} \frac{\left(\int_{A_{I,n}} \phi_n\,\mr d\mu\right)^q}{(\alpha_{I,n})^{q-1}} \approx
\int_{E_{\phi_n}} \phi_n^q\,\mr d\mu,
\end{multline}
by Lemmas \ref{lem:4p1} and \ref{lem:4p3}. \\
We define now for any $R>0$ the set
\[
S_{\phi_n,R} = \cup\left\{ A_{I,n}: I\in S_{\phi_n},\ I\subseteq E_{\phi_n},\ P_{I,n}<R(a_{I,n})^{2-q}\right\}.
\]
Then for $I\in S_{\phi_n}$ such that $I\subseteq E_{\phi_n}$ and $P_{I,n} < R(\alpha_{I,n})^{2-q}$ we have that
\begin{align} \label{eq:4p48}
& \int_{A_{I,n}} \phi_n^q\,\mr d\mu < R \alpha_{I,n} \implies\ \text{(by summing up to all such $I$)} \notag \\
& \int_{S_{\phi_n,R}} \phi_n^q\,\mr d\mu < R \mu(S_{\phi_n,R}).
\end{align}
Additionally we have that
\begin{equation} \label{eq:4p49}
\Bigg|\!\!\! \sum_{\substack{I\in S_{\phi_n}\\ I\subseteq E_{\phi_n},\ P_{I,n}\geq R\alpha_{I,n}^{2-q}}}\hspace{-10pt} \alpha_{I,n}^{q-1} P_{I,n} - \int_{E_{\phi_n}} \phi_n^q\,\mr d\mu\ \Bigg| =
\int_{S_{\phi_n,R}} \phi_n^q\,\mr d\mu,
\end{equation}
and
\begin{align} \label{eq:4p50}
& \Bigg|\!\! \sum_{\substack{I\in S_{\phi_n}\\ I\subseteq E_{\phi_n},\ P_{I,n}\geq R\alpha_{I,n}^{2-q}}}\hspace{-10pt} \left(\gamma_I^{\phi_n}\right)^{q-1} P_{I,n} - \int_{E_{\phi_n}} \phi_n^q\,\mr d\mu\ \Bigg| \overset{\eqref{eq:4p47}}{\approx} \notag \\
& \Bigg|\!\! \sum_{\substack{I\in S_{\phi_n}\\ I\subseteq E_{\phi_n},\ P_{I,n}\geq R\alpha_{I,n}^{2-q}}}\hspace{-10pt} \left(\gamma_I^{\phi_n}\right)^{q-1} P_{I,n} - \sum_{\substack{I\in S_{\phi_n}\\ I\subseteq E_{\phi_n}}} \left(\gamma_I^{\phi_n}\right)^{1-q} P_{I,n}\ \Bigg| = \notag \\
& \sum_{\substack{I\in S_{\phi_n}\\ I\subseteq E_{\phi_n},\ P_{I,n}< R\alpha_{I,n}^{2-q}}}\hspace{-10pt} \left(\gamma_I^{\phi_n}\right)^{q-1} P_{I,n} =
\sum_{\substack{I\in S_{\phi_n}\\ I\subseteq E_{\phi_n},\ P_{I,n}< R\alpha_{I,n}^{2-q}}}\hspace{-10pt} \left(\gamma_I^{\phi_n}\right)^{q-1} \frac{(c_I^{\phi_n})^q\gamma_I^{\phi_n}}{(\alpha_{I,n})^{q-1}} = \notag \\
& \sum_{\substack{I\in S_{\phi_n}\\ I\subseteq E_{\phi_n},\ P_{I,n}< R\alpha_{I,n}^{2-q}}}\hspace{-10pt} \frac{(\gamma_I^{\phi_n} c_I^{\phi_n})^q}{(\alpha_{I,n})^{q-1}} =
\sum_{\substack{I\in S_{\phi_n}\\ I\subseteq E_{\phi_n},\ P_{I,n}< R\alpha_{I,n}^{2-q}}}\hspace{-10pt} \frac{\left(\int_{A_{I,n}} \phi_n\,\mr d\mu\right)^q}{\alpha_{I,n}^{2-q}} \approx
\int_{S_{\phi_n,R}} \phi_n^q\,\mr d\mu,
\end{align}
where the last equality in the limit is explained by the same reasons as Lemma \ref{lem:4p3} does.
Using \eqref{eq:4p49} and \eqref{eq:4p50} we conclude that
\begin{equation} \label{eq:4p51}
\limsup_n\hspace{-10pt} \sum_{\substack{I\in S_{\phi_n}\\ I\subseteq E_{\phi_n},\ P_{I,n}\geq R\alpha_{I,n}^{2-q}}}\hspace{-10pt} \left[ \left(\gamma_I^{\phi_n}\right)^{q-1} - (\alpha_{I,n})^{q-1} \right] P_{I,n} \leq
2 \lim_n \int_{S_{\phi_n,R}} \phi_n^q\,\mr d\mu,
\end{equation}
%
%
By Theorem \ref{thm:4p3} now, and Lemma \ref{lem:2p3} (using the form that the $A_{I,n}$ have, and a diagonal argument) we have that the following is true
\begin{equation} \label{eq:4p52}
\lim_n \int_{S_{\phi_n,R}} (\mc M_{\mc T}\phi_n)^q\,\mr d\mu =
c \lim_n \int_{S_{\phi_n,R}} \phi_n^q\,\mr d\mu.
\end{equation}
Since $\mc M_{\mc T}\phi \geq f$, on $X$ we conclude by \eqref{eq:4p48} and \eqref{eq:4p52} that
\begin{equation} \label{eq:4p53}
f^q \limsup_n \mu(S_{\phi_n,R}) \leq c\,R \limsup \mu(S_{\phi_n,R}).
\end{equation}
Thus if $R>0$ is chosen small enough, we must have because of \eqref{eq:4p53} that $\lim_n \mu(S_{\phi_n,R})=0$, thus by \eqref{eq:4p48} we have $\lim_n \int_{S_{\phi_n,R}}\phi^q\,\mr d\mu=0$, and so by \eqref{eq:4p51} we obtain
\begin{equation} \label{eq:4p54}
\lim_n\hspace{-10pt} \sum_{\substack{I\in S_{\phi_n}\\ I\subseteq E_{\phi_n}, P_{I,n} \geq R a_{I,n}^{2-q}}}\hspace{-10pt} \left[ \left(\gamma_I^{\phi_n}\right)^{q-1} - \left(\alpha_{I,\phi_n}\right)^{q-1}\right] P_{I,n} = 0.
\end{equation}
%
%
%
%
We consider now, for any $y>0$ the function $\phi_y(x) = \frac{x^{q-1}y^{2-q}-y}{y-x}$, defined for $x\in(0,y)$. Is is easy to see that $\lim_{x\to 0^+} \phi_y(x) = +\infty$, $\lim_{x\to y^-} \phi_y(x)=1-q$. Moreover $\phi_y'(x) = \frac{(y-1)x^{q-2}y^{3-q} - (q-2)_x^{q-1}y^{2-q} - y}{(y-x)^2}$, $x\in (0,y)$. \\
Then by setting $x=\lambda y$, $\lambda\in(0,1)$ we define the following function $g(\lambda)=(q-1)\lambda^{q-2} - (q-2)\lambda^{q-1} - 1$, which as is easily seen satisfies $g(\lambda)<0$, for all $\lambda\in(0,1)$. But $\phi_y'(x) = \frac{y\,g(\lambda)}{(1-\lambda)^2y^2}<0$, so that $\phi_y$ is decreasing on $(0,y)$.
Thus $\phi_y(x) \geq 1-q$, for all $x\in(0,y)$ $\implies x^{q-1}y^{2-q}-y \geq (1-q)(y-x)$, $\forall x\in(0,y)$. \\
From the above and \eqref{eq:4p54} we see that
\[
\lim_n \quad \sum_{\mathclap{\substack{I\in S_{\phi_n}\\ I\subseteq E_{\phi_n}, P_{I,n}\geq R_{\alpha_{I,n}}^{2-q}}}} \quad  \left(\alpha_{I,n}-\gamma_{I,n}^{\phi_n}\right) = 0 \implies
\mu(E_{\phi_n}) - \mu(S_{\phi_n,R}) - \quad\sum_{\mathclap{\substack{I\in S_{\phi_n}\\ I\subseteq E_{\phi_n}, P_{I,n}\geq R_{\alpha_{I,n}}^{2-q}}}}\quad \left(\gamma_n^{\phi_n}\right) \approx 0.
\]
Since then $\mu(S_{\phi_n,R})\to0$ we conclude that
\begin{equation} \label{eq:4p55}
\mu(E_{\phi_n}) \approx \quad\sum_{\mathclap{\substack{I\in S_{\phi_n}\\ I\subseteq E_{\phi_n}, P_{I,n}\geq R_{\alpha_{I,n}}^{2-q}}}}\quad  \gamma_I^{\phi_n} \leq
\sum_{I\in S_{\phi_n}, I\subseteq E_{\phi_n}} (\gamma_I^{\phi_n}) \leq
\sum_{I\in S_{\phi_n}, I\subseteq E_{\phi_n}} \alpha_{I,n} =
\mu(E_{\phi_n}).
\end{equation}
Thus from \eqref{eq:4p55} we immediately see that $\sum_{I\in S_{\phi_n}, I\subseteq E_{\phi_n}}\!\! \left(\alpha_{I,n}-\gamma_I^{\phi_n}\right) \approx 0$, or that $\mu(\{g_{\phi_n}=0\}\cap E_{\phi_n}) \approx 0$, and by this we end the proof of Lemma \ref{lem:4p2}.
\end{proof}
\medskip

Now as we have mentioned before, by the construction of $g_{\phi_n}$, we have that $\int_I g_{\phi_n}\,\mr d\mu = \int_I \phi_n\,\mr d\mu$, for every $I\in S_{\phi_n}$.

Thus $\mc M_{\mc T}g_\phi \geq \mc M_{\mc T}\phi\ \text{on}\ X \implies \lim_n \int_X (\mc M_{\mc T}g_{\phi_n})^q\,\mr d\mu \geq h\, c$. Since $\int_X g_{\phi_n}\,\mr d\mu = f$ and $\int g_{\phi_n}^q\,\mr d\mu=h$, by construction, we conclude that $$\lim_n\int _X (\mc M_{\mc T}g_{\phi_n})^q\,\mr d\mu =ch,$$ or that $(g_{\phi_n})_n$ is an extremal sequence.

We prove now the following Lemmas needed for the end of the proof of the characterization of the extremal sequences for \eqref{eq:1p8}

\begin{lemma} \label{lem:4p4}
With the above notation there holds:
\[
\lim \int_{E_{\phi_n}} \left|\mc M_{\mc T}g_{\phi_n} - c^\frac{1}{q}g_{\phi_n}\right|^q\mr d\mu = 0.
\]
\end{lemma}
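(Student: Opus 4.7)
\medskip\noindent\textit{Proof plan.} The idea is to exploit the piecewise constant structure of $g_{\phi_n}$ on $E_{\phi_n}$ in order to reduce the claim to two weighted sums. First I would verify that $\{\mc M_{\mc T} g_{\phi_n}\geq L\}=E_{\phi_n}$: by Lemma \ref{lem:4p1}, the averages of $g_{\phi_n}$ and $\varphi_n$ agree on every $J\in\mc T$ that contains an element of $S_{\phi_n}$, while outside $E_{\phi_n}$ we have $g_{\phi_n}=\varphi_n$, so the two super-level sets coincide. The central pointwise identity to establish is that, for each $I\in S_{\phi_n}$ with $I\subseteq E_{\phi_n}$ and every $x\in A(\varphi_n,I)$,
\[
\mc M_{\mc T} g_{\phi_n}(x) \;=\; \max(y_{I,n},\,c_I^{\phi_n}).
\]
The lower bound comes from $\Av_I(g_{\phi_n})=y_{I,n}$ and from the tree component of the positivity set carrying the value $c_I^{\phi_n}$; for the upper bound, any $J\in\mc T$ with $J\ni x$ either contains an element of $S_{\phi_n}$ (so $\Av_J(g_{\phi_n})=\Av_J(\varphi_n)\leq \mc M_{\mc T}\varphi_n(x)=y_{I,n}$, because $x\in A(\varphi_n,I)$) or lies entirely in $A(\varphi_n,I)$ (so $\Av_J(g_{\phi_n})\leq c_I^{\phi_n}$).

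Combining this identity with Lemma \ref{lem:4p2} and the construction relations $(c_I^{\phi_n})^q\gamma_I^{\phi_n}=\int_{A(\varphi_n,I)}\varphi_n^q\,\mr d\mu$, $c_I^{\phi_n}\gamma_I^{\phi_n}=\int_{A(\varphi_n,I)}\varphi_n\,\mr d\mu$, I would argue that in the relevant asymptotic regime one has $c_I^{\phi_n}\leq y_{I,n}$ in the weighted-sum sense, so $\mc M_{\mc T} g_{\phi_n}\equiv y_{I,n}$ on $A(\varphi_n,I)$ in the limit and the target integral splits as
\[
\int_{E_{\phi_n}}\bigl|\mc M_{\mc T} g_{\phi_n}-c^{\frac 1 q}g_{\phi_n}\bigr|^q\mr d\mu \;\approx\; \sum_I \bigl|y_{I,n}-c^{\frac 1 q}c_I^{\phi_n}\bigr|^q\gamma_I^{\phi_n} \;+\; \sum_I y_{I,n}^q(\alpha_{I,n}-\gamma_I^{\phi_n}).
\]
The second sum equals $\int_{\{g_{\phi_n}=0\}\cap E_{\phi_n}}(\mc M_{\mc T} g_{\phi_n})^q\,\mr d\mu$ and tends to $0$: applying Theorem \ref{thm:4p3} to the family of maximal tree components of $\{g_{\phi_n}=c_I^{\phi_n}\}\cap A(\varphi_n,I)$ summed over $I\subseteq E_{\phi_n}$ in $S_{\phi_n}$ (whose union lies in $\{\mc M_{\mc T} g_{\phi_n}\geq L\}$) yields $\int_{\{g_{\phi_n}>0\}\cap E_{\phi_n}}(\mc M_{\mc T} g_{\phi_n})^q\,\mr d\mu\approx c\int_{E_{\phi_n}} g_{\phi_n}^q\,\mr d\mu$, and subtracting from the extremality identity $\int_{E_{\phi_n}}(\mc M_{\mc T} g_{\phi_n})^q\,\mr d\mu\approx c\int_{E_{\phi_n}} g_{\phi_n}^q\,\mr d\mu$ (valid since $(g_{\phi_n})$ is extremal and the contribution from $X\setminus E_{\phi_n}$ equals $L^q(1-k_n)$) gives the vanishing.

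The main obstacle is the first sum. The construction gives $\sum_I (c_I^{\phi_n})^q\gamma_I^{\phi_n}=\int_{E_{\phi_n}}\varphi_n^q\,\mr d\mu$ while the previous step produces $\sum_I y_{I,n}^q\gamma_I^{\phi_n}\approx c\int_{E_{\phi_n}}\varphi_n^q\,\mr d\mu$, so the weighted average of $u_I^q:=c(c_I^{\phi_n}/y_{I,n})^q$ against the weights $v_I:=y_{I,n}^q\gamma_I^{\phi_n}$ tends to $1$. I would combine this with the first-moment identity $\sum_I c_I^{\phi_n}\gamma_I^{\phi_n}=\int_{E_{\phi_n}}\varphi_n\,\mr d\mu$ and apply the tangent line inequality $t^q\leq qt+(1-q)$ of \eqref{eq:4p28} to $t=u_I$ with the weights $v_I$, mimicking the passage \eqref{eq:4p28}--\eqref{eq:4p34} of the $\Lambda_n$ analysis: splitting the sum into $\{u_I\leq 1\}$ and $\{u_I>1\}$ and exploiting equality in the limit in each region upgrades the weighted equality to $L^1$-smallness of $|u_I-1|$ against $v_I$, and a H\"older step then yields $\sum_I|y_{I,n}-c^{1/q}c_I^{\phi_n}|^q\gamma_I^{\phi_n}\to 0$. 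The delicate point is producing the matching first-moment weighted identity, which should come from a second application of Theorem \ref{thm:4p3} (or from the sharpened chain of inequalities leading to \eqref{eq:4p23}) that transfers the extremality information from $q$-th moments to first moments on the positivity pieces.
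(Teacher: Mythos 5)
Your approach is genuinely different from the paper's, and the paper's route is worth comparing because it sidesteps exactly the obstacle you flag at the end.

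The paper does not attempt a coefficient--by--coefficient analysis of $|y_{I,n}-c^{1/q}c_I^{\phi_n}|$. Instead it sets $\Delta_n=\{t\in E_{\phi_n}:\ \mc M_{\mc T}g_{\phi_n}(t)\geq c^{1/q}g_{\phi_n}(t)\}$ and exploits that on $\Delta_n$ the quantity $(\mc M_{\mc T}g_{\phi_n})^q-c\,g_{\phi_n}^q$ is pointwise nonnegative, and on $E_{\phi_n}\setminus\Delta_n$ pointwise nonpositive. On $\Delta_n$ the one-sided integral inequality $\lim_n\int_{\Delta_n}(\mc M_{\mc T}g_{\phi_n})^q\geq c\lim_n\int_{\Delta_n}g_{\phi_n}^q$ is then automatic. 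The real work is a careful case analysis showing that $E_{\phi_n}\setminus\Delta_n$ is, up to a set of vanishing measure (controlled by Lemma \ref{lem:4p2}), a union of entire sets $A_{I,n}$: in the case $y_{I,n}>c^{1/q}c_I^{\phi_n}$ one gets $(E_{\phi_n}\setminus\Delta_n)\cap A_{I,n}=\emptyset$, while in the case $y_{I,n}\leq c^{1/q}c_I^{\phi_n}$ a two-step tree argument shows $(E_{\phi_n}\setminus\Delta_n)\cap A_{I,n}=A_{I,n}\setminus\{g_{\phi_n}=0\}$. Once $E_{\phi_n}\setminus\Delta_n$ is identified as (essentially) a union of $A_{I,n}$'s, Theorem \ref{thm:4p3} gives the reverse inequality there, summation gives equality in \eqref{eq:4p59} (else extremality of $(g_{\phi_n})_n$ is violated), hence equality in both pieces, and then the $L^1$-to-$L^q$ upgrade of Lemma \ref{lem:4p5} finishes. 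Notice that the dichotomy the paper uses, $y_{I,n}\lessgtr c^{1/q}c_I^{\phi_n}$, is not the dichotomy $c_I^{\phi_n}\lessgtr y_{I,n}$ that your argument hinges on, and $c>1$ makes these genuinely different.

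The gap in your proposal is precisely where you say ``the delicate point is producing the matching first-moment weighted identity.'' Theorem \ref{thm:4p3}, together with the construction relations $(c_I^{\phi_n})^q\gamma_I^{\phi_n}=\int_{A_{I,n}}\varphi_n^q\,\mr d\mu$, gives you only the $q$-th moment identity $\sum_I v_I u_I^q\approx\sum_I v_I$ with $u_I=c^{1/q}c_I^{\phi_n}/y_{I,n}$ and $v_I=y_{I,n}^q\gamma_I^{\phi_n}$. To run the tangent-line argument of \eqref{eq:4p28}--\eqref{eq:4p34} you also need $\sum_I v_I u_I\approx\sum_I v_I$, i.e.\ a weighted first-moment identity linking $\sum_I y_{I,n}^{q-1}\bigl(\int_{A_{I,n}}\varphi_n\bigr)$ to $\sum_I y_{I,n}^q\gamma_I^{\phi_n}$. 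No mechanism in the paper produces such an identity: Theorem \ref{thm:4p3} and the sharpened chain leading to \eqref{eq:4p23} are all $q$-th power statements, and \eqref{eq:4p34} itself is obtained only after the first moment $\int_{X\setminus E_n}\varphi_n$ is pinned down by the separate convergence $B_n\to B^\star$, which has no counterpart for the internal pieces $A_{I,n}$. Without that first-moment control the tangent-line trick does not close, so the first sum in your decomposition is not shown to vanish. There are also two smaller issues: the identity $\mc M_{\mc T}g_{\phi_n}(x)=\max(y_{I,n},c_I^{\phi_n})$ is only guaranteed on $\{g_{\phi_n}>0\}\cap A_{I,n}$ (on the zero set you only have the two-sided bound $y_{I,n}\leq\mc M_{\mc T}g_{\phi_n}\leq\max(y_{I,n},c_I^{\phi_n})$, so the second sum need not equal $\int_{\{g_{\phi_n}=0\}\cap E_{\phi_n}}(\mc M_{\mc T}g_{\phi_n})^q$), and Theorem \ref{thm:4p3} is formulated for disjoint families inside $S_{\phi_n}$, whereas the maximal tree components of $\{g_{\phi_n}=c_I^{\phi_n}\}\cap A_{I,n}$ lie in $\mc T\setminus S_{\phi_n}$ (the paper itself only applies Theorem \ref{thm:4p3} to unions of whole $A_{I,n}$'s, via Lemma \ref{lem:2p3} and a diagonal argument). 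I would recommend switching to the paper's sign decomposition via $\Delta_n$, which turns the hard half of the estimate into an application of Theorem \ref{thm:4p3} on a union of whole $A_{I,n}$'s and avoids the missing first-moment identity entirely.
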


\begin{proof}
We define for every $n\in\mb N^\star$ the set:
\[
\Delta_n = \left\{ t\in E_{\phi_n}:\ \mc M_{\mc T}g_{\phi_n} \geq c^\frac{1}{q} g_{\phi_n}(t)\right\}.
\]
It is obvious, by passing, if necessary to a subsequence that
\begin{equation} \label{eq:4p56}
\lim_n \int_{\Delta_n} (\mc M_{\mc T}g_{\phi_n})^q\,\mr d\mu \geq c \lim_n \int_{\Delta_n} g_{\phi_n}^q\,\mr d\mu.
\end{equation}
We consider now for every $I\in S_{\phi_n}$, $I\subseteq E_{\phi_n}$ the set $(E_{\phi_n}\!\setminus \Delta_n)\cap A_{I,n}$ where $A_{I,n} = A(\phi_n,I)$. We distinguish two cases.
\begin{enumerate}[\hspace{-5pt}(i)]
\item $\Av_I(\phi_n) = y_{I,n} > c^\frac{1}{q} c_I^{\phi_n}$, where $c_I^{\phi_n}$ is the positive value of $g_{\phi_n}$ on $A_{I,n}$ (if it exists). Then because of Lemma \ref{lem:4p1} we have that
\[
\mc M_{\mc T}g_{\phi_n}(t) \geq \Av_I(g_{\phi_n}) =\Av_I(\phi_n) > c^\frac{1}{q} c_I^{\phi_n} \geq c^\frac{1}{q} g_\phi(t),
\]
for each $t\in A_{I,n}$. Thus $(E_{\phi_n}\!\setminus \Delta_n)\cap A_{I,n} = \emptyset$ in this case. \\
We study now the second one. \\
\item $y_I \leq c^\frac{1}{q} c_I^{\phi_n}$. Let now $t\in A_{I,n}$ with $g_\phi(t)>0$, that is $g_{\phi_n}(t) = c_I^{\phi_n}$. We prove that for this $t$ we have $\mc M_{\mc T}g_{\phi_n} \leq c^\frac{1}{q} g_{\phi_n}(t) = c^\frac{1}{q} c_I^{\phi_n}$.
\end{enumerate}
Suppose now that we have the opposite inequality. Then there exists $J_t\in \mc T$ such that $t\in J_t$ and $\Av_{J_t}(g_{\phi_n}) > c^\frac{1}{q} c_I^{\phi_n}$. Then one of the following holds
\begin{enumerate}[(a)]
\item $J_t \subseteq A_{I,n}$. Then by the form of $g_{\phi_n} / A_{I,n}$ (equals $0$ or $c_i^{\phi_n}$), we have that $\Av_{J_t}(g_{\phi_n}) \leq c_I^{\phi_n} \leq c^\frac{1}{q} c_I^{\phi_n}$, which is a contradiction. Thus this case is excluded.
\item $J_t$ is not a subset of $A_{I,n}$. Then two subcases can occur.
\begin{enumerate}[($\mr b_1$)]
\setlength{\parsep}{0pt}
\item $J_t \subseteq I \subseteq E_{\phi_n}$ and contains properly an element of $S_{\phi_n}$, $J'$, for which $(J')^\star = I$. Since now (\rnum{2}) holds, $t\in J_t$ and $\Av_{J_t}(g_{\phi_n}) > c^\frac{1}{q}c_I^{\phi_n}$, we must have that $J' \subsetneq J_t \subsetneq I$.
We choose now an element $J_t'$ of $\mc T$, $J_t \subsetneq I$ which contains $J_t$, with maximum value on the average $\Av_{J_t'}(\phi_n)$. Then by it's choice we have that for each $K\in \mc T$ such that $J_t' \subset K \subsetneq I$ there holds $\Av_K(\phi) \leq \Av_{J_t'}(\phi)$. Since now $I\in S_{\phi_n}$ and $\Av_I(\phi_n) \leq c^\frac{1}{q}c_I^{\phi_n}$, by Lemma \ref{lem:2p2} and the choice of $J_t'$ we have that $\Av_K(\phi_n) < \Av_{J_t'}(\phi_n)$ for every $K\in \mc T$ such that $J_t' \subseteq K$. So again by Lemma \ref{lem:2p2} we conclude that $J_t'\in S_{\phi_n}$. But this is impossible, since $J' \subsetneq J_t' \subsetneq I$, $J'\!, I\in S_{\phi_n}$ and $(J')^\star = I$. We turn now to the second subcase.
\item $I\subsetneq J_t$. Then by application of Lemma \ref{lem:4p1} we have that $\Av_{J_t}(\phi_n) = \Av_{J_t}(g_{\phi_n}) > c^\frac{1}{q}c_I^{\phi_n} \geq y_{I,n} = \Av_I(\phi_n)$ which is impossible by Lemma \ref{lem:2p2}, since $I\in S_{\phi_n}$.
\end{enumerate}
\end{enumerate}
Thus in any of the two cases ($b_1$) and ($b_2$) we have proved that we have \\ $(E_{\phi_n}\!\setminus \Delta_n) \cap A_{I,n} = A_{I,n} \setminus \{g_{\phi_n}\!=0\}$, while we showed that in the case (\rnum{1}), $(E_{\phi_n}\!\setminus \Delta_n)\cap A_{I,n} = \emptyset$. \\
Since $\cup \{A_I: I\in S_{\phi_n},\ I\subseteq E_{\phi_n}\} \approx E_{\phi_n}$, we conclude by the above discussion that $E_{\phi_n}\!\setminus \Delta_n$ can be written as $\left(\cup_{I\in S_{1,\phi_n}} A_{I,n}\right)\setminus \Gamma_{\phi_n}$, where $\mu(\Gamma_{\phi_n})\to 0$ and $S_{1,\phi_n}$ is a subtree of $S_{\phi_n}$. Then by Lemma \ref{lem:2p3} and Theorem \ref{thm:4p3}, by passing if necessary to a subsequence, we have
\[
\lim_n \int_{\cup_{I\in S_{1,\phi_n}}\! A_{I,n}} (\mc M_{\mc T}\phi_n)^q\,\mr d\mu =
c \lim_n \int_{\cup_{I\in S_{1,\phi_n}}\! A_{I,n}} \phi_n^q\,\mr d\mu,
\]
so since $\lim_n \mu(\Gamma_{\phi_n}) = 0$, we conclude that
\begin{equation} \label{eq:4p57}
\lim_n \int_{E_{\phi_n}\!\setminus\Delta_n} (\mc M_{\mc T}\phi_n)^q\,\mr d\mu =
c \lim_n \int_{E_{\phi_n}\!\setminus\Delta_n} \phi_n^q\,\mr d\mu.
\end{equation}
Because then of the relation $\mc M_{\mc T}g_\phi \geq \mc M_{\mc T}\phi$, on $X$ we have by \eqref{eq:4p57} as a consequence that
\begin{equation} \label{eq:4p58}
\lim_n \int_{E_{\phi_n}\!\setminus\Delta_n} (\mc M_{\mc T}g_{\phi_n})^q\,\mr d\mu \geq
c \lim_n \int_{E_{\phi_n}\!\setminus \Delta_n} g_{\phi_n}^q\,\mr d\mu.
\end{equation}
Adding \eqref{eq:4p56} and \eqref{eq:4p58}, we obtain
\begin{equation} \label{eq:4p59}
\lim_n \int_{E_{\phi_n}} (\mc M_{\mc T}g_{\phi_n})^q\,\mr d\mu \geq
c\int_{E_{\phi_n}} g_{\phi_n}^q\,\mr d\mu,
\end{equation}
which in fact is an equality, because if we had strict inequality in \eqref{eq:4p59} we would produce since $g_{\phi_n} = \phi_n$ on $X\setminus E_{\phi_n}$, that $\lim_n \int_X (\mc M_{\mc T}g_{\phi_n})^q\,\mr d\mu > c\, h$, as we can easily see.
This is a contradiction, since $\int_X g_{\phi_n}\,\mr d\mu = f$ and $\int_X g_{\phi_n}^q\,\mr d\mu = h$, for every $n\in\mb N$, and because of Theorem \ref{thm:1}.
Thus we must have equality in both \eqref{eq:4p56} and \eqref{eq:4p58}. \\
Our proof is completed.
\end{proof}
We proceed now to the following

\begin{lemma} \label{lem:4p5}
Let $X_n\subset X$, and $h_n, z_n: X_n\to \mb R^+$ be measurable functions such that $h_n^q = z_n$, where $q\in(0,1)$ is fixed. Suppose additionally that $g_n, w_n: X\to \mb R^+$ satisfy $g_n^q = w_n$. Suppose also that $g_n \geq h_n$, on $X_n$.
Then if $\lim_n \int_{X_n} (w_n-z_n)\,\mr d\mu = 0$ and the sequence $\int_{X_n}\! w_n\,\mr d\mu$ is bounded, we have that $\lim_n \int_{X_n} (g_n-h_n)^q\,\mr d\mu = 0$.
\end{lemma}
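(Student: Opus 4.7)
The plan is to reduce the statement to one elementary pointwise inequality combined with a single application of H\"older's inequality. The key observation is that for $0\le b\le a$ and $q\in(0,1)$, the mean value theorem applied to $t\mapsto t^q$ on $[b,a]$, together with the fact that $t^{q-1}$ is decreasing in $t$, gives $a^q-b^q\ge q a^{q-1}(a-b)$. Taking $a=g_n(x)$ and $b=h_n(x)$ at $x\in X_n\cap\{g_n>0\}$ (on the set $\{g_n=0\}\cap X_n$ the assumption $g_n\ge h_n\ge 0$ forces $h_n=0$, so $(g_n-h_n)^q$ vanishes there and contributes nothing) yields the pointwise estimate
\[
g_n-h_n \;\le\; \frac{1}{q}\,g_n^{\,1-q}\bigl(g_n^q-h_n^q\bigr)\quad\text{on }X_n.
\]

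Raising to the $q$-th power, which is legitimate since both sides are nonnegative, and integrating over $X_n$, I would obtain
\[
\int_{X_n}(g_n-h_n)^q\,\mr d\mu \;\le\; q^{-q}\int_{X_n} g_n^{\,q(1-q)}\bigl(g_n^q-h_n^q\bigr)^q\,\mr d\mu.
\]
Next I would apply H\"older's inequality to the integrand on the right with conjugate exponents $1/(1-q)$ and $1/q$. Noting that $\bigl(g_n^{q(1-q)}\bigr)^{1/(1-q)}=g_n^q=w_n$ and that $\bigl((g_n^q-h_n^q)^q\bigr)^{1/q}=g_n^q-h_n^q=w_n-z_n$, this gives
\[
\int_{X_n}(g_n-h_n)^q\,\mr d\mu \;\le\; q^{-q}\!\left(\int_{X_n}\!w_n\,\mr d\mu\right)^{\!1-q}\!\left(\int_{X_n}\!(w_n-z_n)\,\mr d\mu\right)^{\!q}.
\]

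To finish, the first factor on the right is bounded by hypothesis and the second tends to zero, so the left-hand side tends to zero as well. The argument presents no genuine obstacle; it reduces to one elementary calculus estimate together with one H\"older inequality, the only small point of care being the trivial vanishing of the integrand on $\{g_n=0\}\cap X_n$ already noted.
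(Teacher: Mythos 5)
Your proof is correct and follows essentially the same route as the paper: the pointwise estimate $g_n-h_n\le\frac{1}{q}\,g_n^{1-q}(g_n^q-h_n^q)$ that you derive from the mean value theorem for $t\mapsto t^q$ is identical to the paper's inequality $x^p-y^p\le p(x-y)x^{p-1}$ with $p=1/q$, $x=w_n$, $y=z_n$, and the subsequent H\"older step with exponents $1/q$ and $1/(1-q)$ is the same. The only difference is cosmetic (applying MVT to $t^q$ rather than to $t^{1/q}$), so nothing further needs to be said.
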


\begin{proof}
We set $I_n = \int_{x_n} \Big(w_n^\frac{1}{q} - z_n^\frac{1}{q}\Big)^q\mr d\mu$. \\
For every $p>1$, the following elementary inequality is true $x^p-y^p \leq p(x-y)x^{p-1}$, for $x>y>0$. Thus for $p=\frac{1}{q}$, we have $w_n^p - z_n^p \leq p(w_n-z_n)w_n^{p-1} \implies$
\begin{equation} \label{eq:4p60}
I_n \leq \left(\frac{1}{q}\right)^q \int_{X_n} (w_n-z_n)^q w_n^{1-q}\,\mr d\mu.
\end{equation}
If we use now H\"{o}lder's inequality in \eqref{eq:4p60} we immediately obtain that $I_n \leq \left(\frac{1}{q}\right)^q \left(\int_{X_n} (w_n-w)\,\mr d\mu\right)^q \left(\int_{X_n} w_n\right)^{1-q} \to 0$, as $n\to \infty$, by our hypothesis.
\end{proof}

\noindent Let now $(\phi_n)$ be an extremal sequence of functions. We define $g_{\phi_n}': (X,\mu)\to \mb R^+$ by
\[
g_{\phi_n}'(t) = c_I^{\phi_n},\ t\in A_{I,n} = A(\phi_n,I),\ \ \text{for}\ I\in S_{\phi_n}.
\]
\noindent We prove now the following
\begin{lemma} \label{lem:4p6}
With the above notation $\lim_n \int_{E_{\phi_n}} |g_{\phi_n}' - \phi_n|^q\,\mr d\mu = 0$.
\end{lemma}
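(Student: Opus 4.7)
The plan is to apply Lemma \ref{lem:4p5} on $X_n = E_{\phi_n}$ with $g_n = \max(g_{\phi_n}', \varphi_n)$ and $h_n = \min(g_{\phi_n}', \varphi_n)$. Since $g_n \geq h_n$ and $(g_n - h_n)^q = |g_{\phi_n}' - \varphi_n|^q$, the lemma reduces the claim to verifying: (a) $\int_{E_{\phi_n}} |(g_{\phi_n}')^q - \varphi_n^q|\,\mr d\mu \to 0$, and (b) $\int_{E_{\phi_n}} g_n^q\,\mr d\mu$ is uniformly bounded. The key identities from Lemma \ref{lem:4p1} are that on each $A_{I,n} \subseteq E_{\phi_n}$, $g_{\phi_n}'$ equals the constant $c_I^{\phi_n}$ while $\int_{A_{I,n}} \varphi_n\,\mr d\mu = c_I^{\phi_n} \gamma_I^{\phi_n}$ and $\int_{A_{I,n}} \varphi_n^q\,\mr d\mu = (c_I^{\phi_n})^q \gamma_I^{\phi_n}$; in particular Chebyshev gives $\mu(\{\varphi_n > c_I^{\phi_n}\} \cap A_{I,n}) \leq \gamma_I^{\phi_n}$.

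Split $A_{I,n}$ by the sign of $c_I^{\phi_n} - \varphi_n$ and let $U_I$, $V_I$ denote the integrals of $((c_I^{\phi_n})^q - \varphi_n^q)_+$ and $(\varphi_n^q - (c_I^{\phi_n})^q)_+$ over $A_{I,n}$ respectively. Then $U_I + V_I$ is the integrand of (a) on $A_{I,n}$, and $U_I - V_I = (c_I^{\phi_n})^q(\alpha_{I,n} - \gamma_I^{\phi_n})$. The central estimate is the weighted refinement of Lemma \ref{lem:4p2}, namely $\sum_I (c_I^{\phi_n})^q (\alpha_{I,n} - \gamma_I^{\phi_n}) \to 0$. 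To obtain it, observe that on $A_{I,n} \cap \{g_{\phi_n} = 0\}$ the dyadic cubes $I_{i_k,j}'$ produced in the construction of Lemma \ref{lem:4p1} each carry a fixed proportion $\alpha$ (the parameter inherited from Lemma \ref{lem:2p1}) of the level set $\{g_{\phi_n} = c_I^{\phi_n}\}$, yielding a pointwise lower bound $\mathcal{M}_{\mc T}g_{\phi_n}(x) \geq \kappa c_I^{\phi_n}$ there. Since Lemma \ref{lem:4p4} forces $\int_{\{g_{\phi_n}=0\} \cap E_{\phi_n}} (\mc M_{\mc T} g_{\phi_n})^q\,\mr d\mu \to 0$ (the integrand on that set being exactly $|\mc M_{\mc T} g_{\phi_n} - c^{1/q} g_{\phi_n}|^q$), the weighted sum vanishes in the limit.

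This gives $U_I - V_I \to 0$ in aggregate; to upgrade to (a) one must show $\sum V_I \to 0$ separately. Here one uses that on $\{\varphi_n > c_I^{\phi_n}\} \cap A_{I,n}$ one has $\mc M_{\mc T} \varphi_n \equiv y_{I,n}$, while combining the already established $\Lambda_n \to 0$ with Lemma \ref{lem:4p4} applied to $g_{\phi_n}$ forces $y_{I,n}$ to be close to $c^{1/q} c_I^{\phi_n}$ in the appropriate aggregate $L^q$ sense (since $\mc M_{\mc T} g_{\phi_n} \geq y_{I,n}$ and $g_{\phi_n} = c_I^{\phi_n}$ on the level set of measure $\gamma_I^{\phi_n}$). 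Together with the Chebyshev bound on the measure of the overshoot set, this pins down $c_I^{\phi_n}$ as the $L^q$-representative of $\varphi_n$ on $A_{I,n}$, yielding $\sum V_I \to 0$ and hence $\sum U_I \to 0$. Boundedness (b) follows from (a) together with $\int_X \varphi_n^q\,\mr d\mu = h$, and Lemma \ref{lem:4p5} delivers the conclusion.

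The main obstacle I expect is precisely the pointwise lower bound $\mc M_{\mc T} g_{\phi_n} \geq \kappa c_I^{\phi_n}$ on $A_{I,n} \cap \{g_{\phi_n}=0\}$, which is delicate because it depends on unpacking the explicit dyadic placement of the level sets of $g_{\phi_n}$ produced by the recursive construction of Lemma \ref{lem:4p1} and exploiting the Lemma \ref{lem:2p1} parameter $\alpha$; the upgrade from $\sum(U_I - V_I) \to 0$ to separate vanishing of $\sum U_I$ and $\sum V_I$ is the secondary difficulty, requiring careful use of the extremality of $(\varphi_n)$ through the identification $y_{I,n} \approx c^{1/q} c_I^{\phi_n}$.
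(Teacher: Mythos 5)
Your reduction via Lemma~\ref{lem:4p5} and the bookkeeping identity $U_I - V_I = (c_I^{\phi_n})^q(\alpha_{I,n}-\gamma_I^{\phi_n})$ are sound, but the two steps you yourself flag as the real work each contain a genuine gap.

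The claimed pointwise lower bound $\mc M_{\mc T}g_{\phi_n}(x)\ge\kappa\,c_I^{\phi_n}$ on $A_{I,n}\cap\{g_{\phi_n}=0\}$ is not supported by the construction of Lemma~\ref{lem:4p1}, and cannot hold with a universal $\kappa$. The parameter $\alpha$ there is $\alpha=\gamma_{j,k,1}^{\phi}/\mu(B_{j,k})$, determined by H\"older on each $B_{j,k}$; it is cube-dependent and can be arbitrarily close to $0$. Moreover $B_{j,k}=I_{i_k,j}'\cap A_{I_j}$ need not be measure-comparable to the tree cube $I_{i_k,j}'$, so even $\Av_{I_{i_k,j}'}(g_{\phi_n})\gtrsim \alpha\,c_I^{\phi_n}$ is not assured. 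Even if one accepts the $I$-dependent value $\kappa_I=\gamma_I^{\phi_n}/\alpha_{I,n}$, combining it with the consequence of Lemma~\ref{lem:4p4} only controls $\sum_I (\gamma_I^{\phi_n}/\alpha_{I,n})^q(c_I^{\phi_n})^q(\alpha_{I,n}-\gamma_I^{\phi_n})$, which is strictly weaker than the needed $\sum_I (c_I^{\phi_n})^q(\alpha_{I,n}-\gamma_I^{\phi_n})\to 0$ precisely in the regime $\gamma_I^{\phi_n}\ll\alpha_{I,n}$ where the term matters. The second gap is in the passage from $\sum(U_I-V_I)\to 0$ to separate vanishing: on $A_{I,n}$ one only has the one-sided $\mc M_{\mc T}g_{\phi_n}\ge y_{I,n}$, so Lemma~\ref{lem:4p4} does not pin down $y_{I,n}$ to $c^{1/q}c_I^{\phi_n}$ from below, and the Chebyshev bound $\mu(\{\varphi_n>c_I^{\phi_n}\}\cap A_{I,n})\le\gamma_I^{\phi_n}$ by itself says nothing about the size of the overshoot integral $V_I$.

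The paper takes a different and more robust route: it applies the tight pointwise inequality $t+\tfrac{1-q}{q}\ge\tfrac{t^q}{q}$ with $t=\varphi_n/c_I^{\phi_n}$ on each of $\Delta^{(1)}_{I,n}=\{g'_{\phi_n}\le\varphi_n\}\cap A_{I,n}$ and $\Delta^{(2)}_{I,n}=\{g'_{\phi_n}>\varphi_n\}\cap A_{I,n}$, sums to obtain \eqref{eq:4p65}, and uses the asymptotic identity $\int_{E_{\phi_n}}\varphi_n^q\,\mr d\mu=\sum_I\gamma_I^{\phi_n}(c_I^{\phi_n})^q\approx\int_{E_{\phi_n}}(g'_{\phi_n})^q\,\mr d\mu$ to conclude that \eqref{eq:4p65} is an equality in the limit. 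Since \eqref{eq:4p63} and \eqref{eq:4p64} are each one-sided, equality in their sum forces both to be asymptotic equalities; a H\"older argument then converts \eqref{eq:4p63} into $t_n/s_n\to 1$ and hence $t_n-s_n\to 0$, and similarly for the other side. This variational mechanism bypasses both of your problematic steps --- there is no need for a maximal-function lower bound on the zero set nor an identification $y_{I,n}\approx c^{1/q}c_I^{\phi_n}$ --- and is the direction you should pursue.
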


\begin{proof}
We are going to use again the inequality $t + \frac{1-q}{q} \geq \frac{t^q}{q}$, which holds for every $t>0$ and $q\in(0,1)$. In view of Lemma \ref{lem:4p5} we just need to prove that
\[
\int_{\{\phi_n \geq g_{\phi_n}'\}\cap E_{\phi_n}}\!\!\! [\phi_n^q - (g_{\phi_n}')^q]\,\mr d\mu \to 0\quad \text{and}\quad
\int_{\{g_{\phi_n}' > \phi_n\}\cap E_{\phi_n}}\!\!\! [(g_{\phi_n}')^q - \phi_n]\,\mr d\mu \to 0.
\]
We proceed to this as follows. \\
For every $I\in S_{\phi_n}$, $I\subseteq E_{\phi_n}$, we set
\begin{align*}
\Delta_{I,n}^{(1)} = \{g_{\phi_n}' \leq \phi_n\} \cap A(\phi_n, I), \\
\Delta_{I,n}^{(2)} = \{\phi_n < g_{\phi_n}'\} \cap A(\phi_n,I).
\end{align*}
From the inequality mentioned in the beginning of this proof we have that, if $c_I^{\phi_n} > 0$, then
\[
\frac{\phi_n(x)}{c_I^{\phi_n}} + \frac{1-q}{q} \geq \frac{1}{q} \frac{\phi_n^q(x)}{(c_I^{\phi_n})^q}, \ \ \forall x\in A_{I,n},
\]
so integrating over every $\Delta_{I,n}^{(j)}$, $j=1,2$ we obtain
\begin{align} \label{eq:4p61}
& \frac{1}{c_I^{\phi_n}} \int_{\Delta_{I,n}^{(j)}} \phi_n\,\mr d\mu + \frac{1-q}{q}\mu(\Delta_{I,n}^{(j)}) \geq
\frac{1}{q} \frac{1}{(c_I^{\phi_n})^q} \int_{\Delta_{I,n}^{(j)}} \phi_n^q\,\mr d\mu \implies \notag \\
& \sum_{I\in S_{\phi_n}'} (c_I^{\phi_n})^q \int_{\Delta_{I,n}^{(j)}} \phi_n\,\mr d\mu + \frac{1-q}{q} \sum_{I\in S_{\phi_n}'} \mu(\Delta_{I,n}^{(j)}) (c_I^{\phi_n})^q \geq
 \frac{1}{q} \int_{\cup_{I\in S_{\phi_n}'}\!\!\! \Delta_{I,n}^{(j)}} \phi_n^q\,\mr d\mu,
\end{align}
for $j=1,2$, where $S_{\phi_n}' = \{I\in S_{\phi_n}: I\subseteq E_{\phi_n},\ c_I^{\phi_n}>0\}$. \\
From the definition of $g_{\phi_n}'$ we see that \eqref{eq:4p61} gives
\begin{multline} \label{eq:4p62}
\int_{\cup_{I\in S_{\phi_n}'} \Delta_{I,n}^{(j)}} (g_{\phi_n}')^{q-1}\phi_n\,\mr d\mu + \frac{1-q}{q} \sum_{I\in S_{\phi_n}'} (c_I^{\phi_n})^q \mu(\Delta_{I,n}^{(j)}) \geq \\
\frac{1}{q} \int_{\cup_{I\in S_{\phi_n}'} \Delta_{I,n}^{(j)}} \phi_n^q\,\mr d\mu,\ \ \text{for}\ j=1,2.
\end{multline}
Note now that
\[
\sum_{I\in S_{\phi_n}'} (c_I^{\phi_n})^q \mu(\Delta_{I,n}^{(j)}) = \begin{cases}
\int_{\{\phi_n\geq g_{\phi_n}'\}\cap E_{\phi_n}} (g_{\phi_n}')^q\,\mr d\mu, & j=1 \\[10pt]
\int_{\{\phi_n<g_{\phi_n}'\}\cap E_{\phi_n}} (g_{\phi_n}')^q\,\mr d\mu, & j=2,
\end{cases}
\]
and
\[
\int_{\cup_{I\in S_{\phi_n}'}\!\!\!\Delta_{I,n}^{(j)}} \phi_n^q\,\mr d\mu =  \begin{cases}
\int_{\{\phi_n\geq g_{\phi_n}'\}\cap E_{\phi_n}} \phi_n^q\,\mr d\mu, & j=1 \\[10pt]
\int_{\{\phi_n<g_{\phi_n}'\}\cap E_{\phi_n}} \phi_n^q\,\mr d\mu, & j=2,
\end{cases}
\]
because if $c_I^{\phi_n}=0$, for some $I\in S_{\phi_n}'$, $I\subseteq E_{\phi_n}$, then $\phi_n=0$ on the respective $A_{I,n}$, and conversely. Additionally:
\[
\int_{\cup_{I\in S_{\phi_n}'}\!\!\!\Delta_{I,n}^{(j)}} (g_{\phi_n}')^{q-1}\phi_n\,\mr d\mu =  \begin{cases}
\int_{\{\phi_n\geq g_{\phi_n}'\}\cap E_{\phi_n}} (g_{\phi_n}')^{q-1}\phi_n\,\mr d\mu, & j=1 \\[10pt]
\int_{\{\phi_n<g_{\phi_n}'\}\cap E_{\phi_n}} (g_{\phi_n}')^{q-1}\phi_n\,\mr d\mu, & j=2.
\end{cases}
\]
So we conclude the following two inequalities:
\begin{equation} \label{eq:4p63}
\int\limits_{\{0 < g_{\phi_n}' \leq \phi_n\} \cap E_{\phi_n}}\hspace{-26pt} (g_{\phi_n}')^{q-1}\phi_n\,\mr d\mu +
\frac{1-q}{q}\ \int\limits_{\mathclap{\{g_{\phi_n}' \leq \phi_n\} \cap E_{\phi_n}}} (g_{\phi_n}')^q\,\mr d\mu\quad \geq \quad
\frac{1}{q}\ \int\limits_{\mathclap{\{g_{\phi_n}' \leq \phi_n\} \cap E_{\phi_n}}} \phi_n^q\,\mr d\mu,
\end{equation}
and
\begin{equation} \label{eq:4p64}
\int\limits_{\{g_{\phi_n}' > \phi_n\} \cap E_{\phi_n}}\hspace{-21pt} (g_{\phi_n}')^{q-1}\phi_n\,\mr d\mu +
\frac{1-q}{q}\ \int\limits_{\mathclap{\{g_{\phi_n}' > \phi_n\} \cap E_{\phi_n}}} (g_{\phi_n}')^q\,\mr d\mu\quad \geq \quad
\frac{1}{q}\ \int\limits_{\mathclap{\{g_{\phi_n}' >\phi_n\} \cap E_{\phi_n}}} \phi_n^q\,\mr d\mu.
\end{equation}
If we sum the above inequalities we get:
\begin{equation} \label{eq:4p65}
\sum_{I\in S_{\phi_n}'} (c_I^{\phi_n})^{q-1} (c_I^{\phi_n} \gamma_I^{\phi_n}) + \frac{1-q}{q} \int_{E_{\phi_n}} (g_{\phi_n}')^q\,\mr d\mu \geq
\frac{1}{q} \int_{E_{\phi_n}} \phi_n^q\,\mr d\mu.
\end{equation}
Now the following are true because of Lemma \ref{lem:4p2}
\[
\int_{E_{\phi_n}} \phi_n^q\,\mr d\mu = \sum_{I\in S_{\phi_n}'} \gamma_I^{\phi_n}(c_I^{\phi_n})^q \approx
\int_{E_{\phi_n}} (g_{\phi_n}')^q\,\mr d\mu.
\]
Thus in \eqref{eq:4p65} we must have euality in the limit. As a result we obtain equalities in both \eqref{eq:4p63} and \eqref{eq:4p64} in the limit. \\
As a consequence, if we set
\[
t_n = \int_{\{g_{\phi_n}'\leq \phi_n\}\cap E_{\phi_n}} \phi_n^q\,\mr d\mu, \qquad
s_n = \int_{\{g_{\phi_n}'\leq \phi_n\}\cap E_{\phi_n}} (g_{\phi_n}')^q\,\mr d\mu,
\]
we must have that
\begin{equation} \label{eq:4p66}
\int_{\{\phi_n \geq g_{\phi_n}' > 0\}\cap E_{\phi_n}} \phi_n (g_{\phi_n}')^q\,\mr d\mu + \frac{1-q}{q}s_n \approx \frac{1}{q}t_n.
\end{equation}

But as can be easily seen we have that
\begin{equation} \label{eq:4p67}
\bigg[\qquad \int\limits_{\qquad\mathclap{\{0 < g_{\phi_n}' \leq \phi_n\}\cap E_{\phi_n}}} \phi_n(g_{\phi_n}')^{q-1}\,\mr d\mu\bigg]^q \cdot
\bigg[\qquad \int\limits_{\qquad\mathclap{\{0 < g_{\phi_n}' \leq \phi_n\}\cap E_{\phi_n}}} (g_{\phi_n}')^q\,\mr d\mu\bigg]^{1-q} \geq
\qquad \int\limits_{\qquad\mathclap{\{0 < g_{\phi_n}' \leq \phi_n\}\cap E_{\phi_n}}} \phi_n^q\,\mr d\mu.
\end{equation}
From \eqref{eq:4p66} and \eqref{eq:4p67} we have as a result that
\begin{equation} \label{eq:4p68}
\frac{t_n^\frac{1}{q}}{s_n^{\frac{1}{q}-1}} + \frac{1-q}{q}s_n \leq \frac{1}{q}t_n \implies
\left(\frac{t_n}{s_n}\right)^\frac{1}{q} + \frac{1-q}{q} \leq \frac{1}{q}\left(\frac{t_n}{s_n}\right),
\end{equation}
in the limit. \\
But for every $n\in\mb N$ we have that $\left(\frac{t_n}{s_n}\right)^\frac{1}{q} + \frac{1-q}{q} \geq \frac{1}{q}\left(\frac{t_n}{s_n}\right)$. Thus we have equality in \eqref{eq:4p68} in the limit. This means that $\frac{t_n}{s_n}\approx 1$ and since $(t_n)_n$ and $(s_n)_n$ are bounded sequences, we conclude that
\[
t_n-s_n\to 0 \implies \int_{\{g_{\phi_n}'\leq \phi_n\}\cap E_{\phi_n}} [\phi_n^q-(g_{\phi_n}')^q]\,\mr d\mu \to 0,\ \ \text{as}\ n\to\infty.
\]
In a similar way we prove that $\int_{\{\phi_n < g_{\phi_n}'\}\cap E_{\phi_n}} [(g_{\phi_n}')^q - \phi_n^q]\,\mr d\mu \to 0$. Thus Lemma \ref{lem:4p6} is proved. \\
\end{proof}

\noindent We now proceed to the following.
\begin{lemma} \label{lem:4p7}
With the above notation, we have that
\[
\lim_n \int_{E_{\phi_n}} \left| \mc M_{\mc T}\phi_n - c^\frac{1}{q}\phi_n\right|^q\mr d\mu = 0.
\]
\end{lemma}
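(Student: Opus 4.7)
The plan is to bridge $\varphi_n$ and $c^{1/q}\varphi_n$ through the two auxiliary functions $g_{\phi_n}$ and $g_{\phi_n}'$, reducing Lemma \ref{lem:4p7} to the already established Lemmas \ref{lem:4p4}, \ref{lem:4p5}, \ref{lem:4p6} by repeated use of the $q$-subadditivity $(a+b)^q \leq a^q+b^q$ (valid for $a,b\geq 0$ and $q\in(0,1)$). The chain I wish to exploit is
\[
\varphi_n \ \leftrightsquigarrow\ g_{\phi_n}'\ \leftrightsquigarrow\ g_{\phi_n}\ \overset{\text{Lem.\ref{lem:4p4}}}{\leftrightsquigarrow}\ c^{-1/q}\mc M_{\mc T}g_{\phi_n}\ \leftrightsquigarrow\ c^{-1/q}\mc M_{\mc T}\varphi_n,
\]
where the end links are handled by Lemma \ref{lem:4p6} and by a new application of Lemma \ref{lem:4p5}, respectively.

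The first technical step is to verify the intermediate estimate $\int_{E_{\phi_n}}|g_{\phi_n}-g_{\phi_n}'|^q\mr d\mu\to 0$. By construction $g_{\phi_n}$ and $g_{\phi_n}'$ coincide on $X\setminus E_{\phi_n}$ and on $\{g_{\phi_n}>0\}\cap E_{\phi_n}$ (both equal $c_I^{\phi_n}$ on $A_{I,n}\cap\{g_{\phi_n}>0\}$), so the integrand is supported on $\{g_{\phi_n}=0\}\cap E_{\phi_n}$ and the integral collapses to $\int_{E_{\phi_n}}(g_{\phi_n}')^q\mr d\mu - \int_{E_{\phi_n}}g_{\phi_n}^q\mr d\mu$. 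The second term equals $\int_{E_{\phi_n}}\varphi_n^q\mr d\mu$ by the construction of $g_{\phi_n}$ (Lemma \ref{lem:4p1}), while the first tends to the same value after invoking Lemma \ref{lem:4p6} together with the elementary inequality $|a^q-b^q|\leq |a-b|^q$ valid for $a,b\geq 0$ and $q\in(0,1)$. Once this is in hand, two applications of $q$-subadditivity combined with Lemmas \ref{lem:4p4} and \ref{lem:4p6} yield
\[
\int_{E_{\phi_n}}\bigl|\mc M_{\mc T}g_{\phi_n} - c^{1/q}\varphi_n\bigr|^q\mr d\mu \ \longrightarrow\ 0.
\]

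The remaining and more delicate task is to compare $\mc M_{\mc T}g_{\phi_n}$ with $\mc M_{\mc T}\varphi_n$. Since $\Av_I(g_{\phi_n})=\Av_I(\varphi_n)$ whenever $I\in\mc T$ contains an element of $S_{\phi_n}$, the pointwise inequality $\mc M_{\mc T}g_{\phi_n}\geq \mc M_{\mc T}\varphi_n$ holds a.e.\ on $X$ (as already noted in the text preceding Lemma \ref{lem:4p4}). I therefore plan to apply Lemma \ref{lem:4p5} with $X_n=E_{\phi_n}$, $g_n=\mc M_{\mc T}g_{\phi_n}$, $h_n=\mc M_{\mc T}\varphi_n$, $w_n=g_n^q$, $z_n=h_n^q$. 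The hypothesis $\int_{X_n}(w_n-z_n)\mr d\mu\to 0$ follows because both $\int_{E_{\phi_n}}(\mc M_{\mc T}\varphi_n)^q\mr d\mu$ and $\int_{E_{\phi_n}}(\mc M_{\mc T}g_{\phi_n})^q\mr d\mu$ converge to the common value $c\lim_n\int_{E_{\phi_n}}\varphi_n^q\mr d\mu$: the former from the extremality equality $\int_{E_{\phi_n}}(\mc M_{\mc T}\varphi_n)^q\mr d\mu\approx c\int_{E_{\phi_n}}\varphi_n^q\mr d\mu$ established in the proof of Theorem \ref{thm:4p3}, and the latter from equation \eqref{eq:4p59} in the proof of Lemma \ref{lem:4p4} together with $\int_{E_{\phi_n}}g_{\phi_n}^q\mr d\mu=\int_{E_{\phi_n}}\varphi_n^q\mr d\mu$. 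Boundedness of $\int_{X_n}w_n\mr d\mu$ follows from Theorem \ref{thm:1} applied to $g_{\phi_n}$. Lemma \ref{lem:4p5} then yields $\int_{E_{\phi_n}}|\mc M_{\mc T}g_{\phi_n}-\mc M_{\mc T}\varphi_n|^q\mr d\mu\to 0$, and one final $q$-subadditive triangle bound closes the argument.

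I expect the main obstacle to be precisely Step 3, namely the convergence of the two maximal-operator integrals to the same limit; everything else is a fairly mechanical assembly. The subtlety is that $\mc M_{\mc T}g_{\phi_n}$ may differ from $\mc M_{\mc T}\varphi_n$ on the fine scale inside each $A(\varphi_n,I)$ (where $g_{\phi_n}$ is a two-valued rearrangement rather than the original $\varphi_n$), so the $L^q$-closeness of the two maximal operators cannot be extracted from a pointwise estimate but must be deduced from the quantitative equalities in the extremality chain; hence the necessity of invoking Lemma \ref{lem:4p5} in the form just described.
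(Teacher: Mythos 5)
Your proof follows essentially the same route as the paper: decompose $|\mc M_{\mc T}\varphi_n - c^{1/q}\varphi_n|^q$ by $q$-subadditivity through the bridges $g_{\phi_n}'$ and $g_{\phi_n}$, dispose of the middle piece with Lemma \ref{lem:4p4} and the $\varphi_n$-to-$g_{\phi_n}$ piece with Lemmas \ref{lem:4p6} (and the paper's \ref{lem:4p2}), and finally compare $\mc M_{\mc T}g_{\phi_n}$ with $\mc M_{\mc T}\varphi_n$ via Lemma \ref{lem:4p5}, using the pointwise domination $\mc M_{\mc T}g_{\phi_n}\geq\mc M_{\mc T}\varphi_n$ and the fact that both $q$-th power integrals over $E_{\phi_n}$ converge to $c\lim_n\int_{E_{\phi_n}}\varphi_n^q\,\mr d\mu$. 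Your handling of $\int_{E_{\phi_n}}|g_{\phi_n}-g_{\phi_n}'|^q\,\mr d\mu\to 0$ via the collapse $\int_{E_{\phi_n}}(g_{\phi_n}')^q-\int_{E_{\phi_n}}g_{\phi_n}^q$ together with $|a^q-b^q|\leq|a-b|^q$ is a slightly more explicit rendering of the step the paper attributes in one line to Lemmas \ref{lem:4p6} and \ref{lem:4p2}, and it is a genuine improvement in rigor (it sidesteps the need to control $c_I^{\phi_n}$ directly), but the overall strategy is the same.
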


\begin{proof}
We set $J_n = \int_{E_{\phi_n}} \left|\mc M_{\mc T}\phi_n - c^\frac{1}{q}\phi_n\right|^q\mr d\mu$. \\
It is true that $(x+y)^q < x^q+y^q$, whenever $x,y>0$, $q\in (0,1)$.
Thus
\begin{multline*}
J_n \leq \int_{E_{\phi_n}} |\mc M_{\mc T}\phi_n - \mc M_{\mc T}g_{\phi_n}|^q\,\mr d\mu + \int_{E_{\phi_n}} |\mc M_{\mc T}g_{\phi_n}-c^\frac{1}{q}g_{\phi_n}|^q\,\mr d\mu + \\
c\int_{E_{\phi_n}} |g_{\phi_n}-\phi_n|^q\,\mr d\mu = J_n^{(1)} + J_n^{(2)} + J_n^{(3)}.
\end{multline*}
By Lemmas \ref{lem:4p6} and \ref{lem:4p2} we have that $J_n^{(3)}\to 0$, as $n\to \infty$. Also, $J_n^{(2)}\to 0$ by Lemma \ref{lem:4p4}. We look now at $J_n^{(1)} = \int_{E_{\phi_n}} |\mc M_{\mc T}\phi_n - \mc M_{\mc T}g_{\phi_n}|^q\,\mr d\mu$.
As we have mentioned before $\mc M_{\mc T}g_{\phi_n} \geq \mc M_{\mc T}\phi_n$, on $X$, thus $J_n^{(1)} = \int_{E_{\phi_n}} (\mc M_{\mc T}g_{\phi_n} - \mc M_{\mc T}\phi_n)^q\,\mr d\mu$. \\
Since $\lim_n\int_{E_{\phi_n}} (\mc M_{\mc T}\phi_n)^q\,\mr d\mu = \lim_n\int_{E_{\phi_n}} (\mc M_{\mc T}\phi_m)^q\,\mr d\mu = c \lim \int_{E_{\phi_n}} \phi_n^q$ we immediately see that $J_n^{(1)}\to 0$, by Lemma \ref{lem:4p5}. \\
The proof of Lemma \ref{lem:4p7} is thus complete, completing also the proof of Theorem \ref{thm:a}.
\end{proof}

\begin{remark} \label{rem:4p1}
We need to mention that Theorem \ref{thm:a} holds true on $\mb R^n$ without the hypothesis that the sequence $(\phi_n)_n$ consists of $\mc T$-good functions. This is true since in the case of $\mb R^n$, where $\mc T$ is the usual tree of dyadic subcubes of a fixed cube $Q$, the class of $\mc T$-good functions contains the one of the dyadic step functions on $Q$, which are dense on $L^1(X,\mu)$.
\end{remark}

\vspace{50pt}
\noindent Nikolidakis Eleftherios\\
Visiting Professor\\
Department of Mathematics \\
University of Ioannina \\
Greece\\
E-mail address: lefteris@math.uoc.gr

\end{document}